\newcolumntype{L}[1]{>{\raggedright\arraybackslash}p{#1}}
\newcolumntype{Y}{>{\raggedright\arraybackslash}X}
\setlist[itemize]{leftmargin=1.2em, topsep=1pt, itemsep=1pt, parsep=0pt, partopsep=0pt}
\newcommand{\shft}{c}
\newcommand{\cvx}{g}
\newcommand{\sout}{s}
\newcommand{\cin}{r}
\newcommand{\Cin}{R}
\newcommand{\sinn}{C}
\newcommand{\ssinn}{c}
\newcommand{\cout}{h}
\newcommand{\Pred}{\mathrm{Pred}}
\newcommand{\Act}{\mathrm{Act}}
\newcommand{\R}{\mathbb{R}}
\newtheorem{theorem}{Theorem}
\newtheorem{corollary}{Corollary}
\newtheorem{lemma}{Lemma}
\newtheorem{definition}{Definition}
\newtheorem{assumption}{Assumption}
\newtheorem{proposition}{Proposition}
\newtheorem{remark}{Remark}
\renewenvironment{proof}{\par\textcolor{gray}{\textit{\textbf{Proof.} }}}{\hfill$\square$\par}
\numberwithin{theorem}{section}
\numberwithin{corollary}{section}
\numberwithin{lemma}{section}
\numberwithin{definition}{section}
\numberwithin{assumption}{section}
\numberwithin{proposition}{section}
\numberwithin{cond}{section}
\numberwithin{remark}{section}
\numberwithin{example}{section}
\DeclareMathOperator{\dom}{dom}
\title{A Proximal Method for Composite Optimization\\with Smooth and Convex Components}
\date{}
\author{Samet Uzun$^{1}$}
\author{Dayou Luo$^{2}$}
\author{Beh{\c{c}}et A{\c{c}}{\i}kme{\c{s}}e$^{1}$}
\author{Aleksandr Y. Aravkin$^{2}$}
\affil{
\vspace{-0.2cm}
\begin{center}
$^{1}$Department of Aeronautics \& Astronautics
\end{center}

\vspace{-0.4cm}
\begin{center}
$^{2}$Department of Applied Mathematics
\end{center}

\vspace{-0.4cm}
\begin{center}
University of Washington, Seattle, WA, USA
\end{center}

\vspace{-0.4cm}
\begin{center}
(email: \texttt{\{samet, dayoul, behcet, saravkin\}@uw.edu})
\end{center}
}
\begin{document}
\maketitle

\vspace{-0.6cm}

\begin{abstract}
We introduce prox-convex for minimizing $F(x)=\cvx(x)+\cout(\sinn(x))+\sout(\Cin(x))$, 
where $\cvx$ and $\cout$ are convex, $\sinn$ and $\sout$ are smooth, and each component of $\Cin$ is convex (possibly nonsmooth). 
Here $\cvx$ captures general convex objectives and indicator functions for convex constraints, while the composite template simultaneously models convex penalties on smooth features ($\cout\circ\sinn$) and smooth couplings of convex (possibly nonsmooth) features ($\sout\circ\Cin$).
Each prox-convex step forms a convex subproblem by linearizing only the smooth maps while preserving the existing convex structure. The resulting subproblem is made strongly convex with the proximal metric $Q_k=\mu_kI+H_k^+\succ0$ where $\mu_k$ is adapted using an implicit trust region strategy, and $H_k^+\succeq0$ is an optional curvature term for local acceleration. Under mild Lipschitz/smoothness and a per-coordinate monotone-or-smooth condition, we prove subdifferential regularity, derive two-sided quadratic model error bounds with explicit constants, and obtain sufficient decrease with $O(\varepsilon^{-2})$ complexity for driving the norm of the metric prox-gradient below $\varepsilon$. 
Furthermore, a local error-bound condition for $F$ guarantees a metric step-size error bound and hence local $Q$-linear convergence of the function values.
Using the Taylor-like model framework of Drusvyatskiy, Ioffe, and Lewis, we show that every cluster point of the iterates is limiting-stationary; under our regularity conditions, this further implies Fr\'echet stationarity. The same framework also establishes robustness to inexact subproblem solves and justifies a model-decrease termination rule.
%
% Empirically, prox-convex yields tighter models and fewer outer iterations than fully linearized schemes, especially when curvature concentrates in the convex mappings.
\end{abstract}

\section{Introduction}
We consider composite problems
\begin{equation} \label{eq:def_F}
\min_{x\in\mathbb{R}^m}\; F(x)\;=\;\cvx(x)\;+\;\cout(\sinn(x))\;+\;\sout(\Cin(x)),
\end{equation}
where 
$\cvx:\mathbb{R}^m \to \mathbb{R}\cup\{+\infty\}$ is proper, closed, and convex; 
$\cout : \mathbb{R}^d \to \mathbb{R}$ is finite-valued, closed and convex; 
$\sinn : \mathbb{R}^m \to \mathbb{R}^d$ and $\sout : \mathbb{R}^n \to \mathbb{R}$ are $\mathcal{C}^1$-smooth;
$\Cin:\mathbb{R}^m \to \mathbb{R}^n$ with $\Cin(x)=(\cin_1(x),\ldots,\cin_n(x))$ and each $\cin_i : \mathbb{R}^m \to \mathbb{R}$ is finite-valued, closed and convex. 
For the convergence analysis, we further impose mild technical regularity and Lipschitz assumptions, which are collected in Section~\ref{sec:pcx}.
This class generalizes well-studied {\it convex-composite} problems because of the term $\sout(\Cin(x))$, which is an essential feature for motivating examples.  

\begin{remark}[Modeling breadth]
The decomposition \eqref{eq:def_F} covers a wide range of structured objectives.
Here, $\cvx$ collects convex objectives and indicator functions of convex constraints;
$\cout(\sinn(x))$ captures convex penalties applied to smooth (possibly nonlinear) feature maps;
and $\sout(\Cin(x))$ models smooth couplings of convex (possibly nonsmooth) features such as predicate functions used in temporal logic specifications.

Importantly, the split between $\Cin$ and $\sout$ is \emph{not unique} and can be chosen to preserve convex structure.
Only those features that we want to keep as convex coordinates need to appear inside $\Cin$.
Any additional smooth dependence on $x$ (even if not convex) can be absorbed into the smooth outer map by augmenting the inner map with the identity.
Concretely, if a modeling term has the form $\widetilde{s}(\bar R(x), q(x))$ where $\bar R$ is convex (possibly nonsmooth) and $q$ is smooth, then it can be written as
$\sout(\Cin(x))$ by setting $\Cin(x)=(\bar R(x),x)$ and defining $\sout(y,z)=\widetilde{s}(y,q(z))$.
Thus, one may assume without loss of modeling generality that $\Cin$ collects the convex building blocks, while all smooth components are handled by $\sout$.

Representative instances covered by \eqref{eq:def_F} are summarized in Table~\ref{tab:modeling_examples}.
\end{remark}

% Please add the following required packages to your document preamble:
\begin{table}[!hbt]
\centering
\scriptsize
\setlength{\tabcolsep}{4pt}
\renewcommand{\arraystretch}{1.18}
\setlength{\emergencystretch}{2em}
\begin{tabularx}{\linewidth}{|L{0.10\linewidth}|L{0.266\linewidth}|Y|}
\hline 
\textbf{Component} & \textbf{Modeling role} & \textbf{Typical instances} \\ 
\hline 
\multirow{2}{*}{$\cvx(x)$} & \textbf{Convex objectives} & $\tfrac12\|Ax-b\|_2^2,\ \|Ax-b\|_2,\ a^\top x,\ \lambda\|x\|_1,\ \lambda\|x\|_2,$ TV, nuclear norm $\|X\|_\ast$ \\ 
\cline{2-3} 
& \textbf{Convex constraints} via indicator & $\delta_{\mathcal X}(x)$ (compact convex sets: boxes/balls/polytopes/simplices) \newline $\delta_{\mathcal K}(x)$ (SOC/SDP cones), indicator of affine equalities/dynamics \\ 
\hline 
\multirow{2}{*}{\cout(\sinn(x))} & \textbf{Composition of Convex loss with smooth features} & Nonlinear regression: $\tfrac12\|\sinn(x)\|_2^2,\ \|\sinn(x)\|_1,\ \|\sinn(x)\|_\infty$ \newline Robust fitting: Huber loss on $\sinn(x)$ \\ 
\cline{2-3} 
& \textbf{Exact penalization} of smooth (possibly nonconvex) constraints & For sufficiently large $w$: \newline Equality constraints $\big(\sinn(x) = 0\big)$: $w \| \sinn(x) \|_1$ \newline Inequality constraints $\big(\sinn(x) \!\le\! 0\big)$: $w \sum_i (\ssinn_i(x))_+$, where $(t)_+\!=\!\max\{0,t\}$ \\
\hline 
\multirow{4}{*}{\sout(\Cin(x))} & \textbf{Temporal Logic Specifications} with convex predicate functions & \textbf{Smooth and Exact Parameterizations of the Specifications \cite{uzun2024optimization}} \\ 
% \cline{2-3} 
& \textbf{Disjunction (OR)} $\displaystyle \bigvee_{i=1}^n\big(\cin_i(x)\le 0\big)$
& \textbf{Disjunction (OR)} $-{}^{\vee}h_{p,w}^{\shft}\big(-\cin_1(x),\dots,-\cin_n(x)\big)\le 0 \iff \prod_{i=1}^n \big(\cin_i(x)\big)_+^2 = 0$ \\ 
% \cline{2-3} 
& \textbf{Implication} $\displaystyle \big(\cin_1(x) < 0\big)\Rightarrow\big(\cin_2(x)\le 0\big)$
& \textbf{Implication} $-{}^{\vee}h_{p,w}^{\shft}\big(\cin_1(x),\,-\cin_2(x)\big)\le 0 \iff \big(-\cin_1(x)\big)_+^2\,\big(\cin_2(x)\big)_+^2 = 0$ \\ 
% \cline{2-3} 
& \textbf{Compound}
$\displaystyle \big(\cin_1(x) < 0 \vee \cin_2(x) < 0\big)\Rightarrow$
$\big(\cin_3(x)\le 0 \wedge \cin_4(x)\le 0\big)$
& \textbf{Compound}
$
\begin{aligned}
&-{}^{\vee}h_{p,w}^{\shft}\Big( {}^{\wedge}h_{p,w}^{\shft}\big(\cin_1(x), \cin_2(x)\big),\, {}^{\wedge}h_{p,w}^{\shft}\big(-\cin_3(x), -\cin_4(x)\big)\Big)\le 0 \iff \\
&\Big(\big(-\cin_1(x)\big)_+^2 + \big(-\cin_2(x)\big)_+^2\Big) \Big(\big(\cin_3(x)\big)_+^2 + \big(\cin_4(x)\big)_+^2\Big) = 0
\end{aligned}
$ 
where $(t)_+ = \max\{0,t\}$ \\ 
\hline
\end{tabularx}
\caption{Modeling examples covered by the decomposition $F(x)=\cvx(x)+\cout(\sinn(x))+\sout(\Cin(x))$.}
\label{tab:modeling_examples}
\end{table}

For this broad class of problems, we propose a prox-convex algorithm that \emph{linearizes only the smooth maps} while \emph{preserving all convex structure}. At a current point $x_k$, we form a convex model $F(x;x_k)$ and solve the strongly convex subproblem:
\[
x_{k+1} = \arg\min_{x}\;\Big\{
F(x;x_k)\;+\;\tfrac12\|x-x_k\|_{Q_k}^2
\Big\},
\]
with the positive-definite proximal metric $Q_k:= \mu_kI+H_k^+\succ0$. 
Here $\mu_k>0$ is chosen by a predicted/actual reduction ratio test to enforce sufficient decrease, and $H_k^+\succeq0$ optionally injects curvature from $\mathcal C^2$ components of the smooth maps, with $H_k^+=0$ when such information is not available. 
This design keeps $\cvx$ and $\cout$ exact and retains the convex inner map $\Cin(\cdot)$ inside the linearized $\sout$ term, while the quadratic regularizer ensures stability and strong convexity of the subproblem.

Under standard boundedness and Lipschitz model-error conditions, accepted steps of prox-convex monotonically decrease $F$ and drive the metric prox-gradient
\[
\mathcal{G}_{Q_k}(x_k)\ :=\ Q_k\,(x_k-x_{k+1})
\]
to zero, and moreover $\min_{0\le j<N}\|\mathcal G_{Q_j}(x_j)\| = O(N^{-1/2})$ so that  $O(\varepsilon^{-2})$ accepted steps suffice to obtain $\|\mathcal{G}_{Q_k}(x_k)\|\le\varepsilon$.
A local error-bound condition further yields a metric step-size error bound and thus local $Q$-linear convergence of the function values.
The Taylor-like framework of Drusvyatskiy, Ioffe, and Lewis~\cite{drusvyatskiy2021nonsmooth}, transfers this residual control to stationarity: every cluster point is \emph{limiting} stationary and, by our subdifferential-regularity result, also \emph{Fr\'echet} stationary. The same framework also justifies robustness to inexact subproblem solves and a model-decrease stopping rule. Curvature terms $H_k^+$ provide local acceleration without weakening global guarantees. 

At a high level, the contributions of this paper are:
(i) a structure-preserving prox-convex subproblem that avoids unnecessary linearization of convex components,
(ii) an adaptive proximal metric with optional second-order information to improve conditioning and local progress, and
(iii) a convergence theory with explicit quantitative guarantees.
Additional details about these contributions follow the literature survey below. 

\paragraph{Related Works:} The development of algorithms for nonsmooth, nonconvex optimization has a rich history, evolving from classical methods for nonlinear programming to modern techniques designed for the large-scale, structured problems common in data science. Our proposed prox-convex algorithm builds upon several key lines of research, which we now survey to place our contribution in context.

The core idea of iteratively minimizing a simplified model of a difficult problem has deep roots in numerical optimization. 
For nonlinear least-squares $\min_x \tfrac12\|f(x)\|^2$, the \textbf{Gauss--Newton method} generalizes Newton's method \cite[Section 3.3]{nocedal2006numerical} by linearizing the inner function to create a tractable subproblem \cite[Section 10.3]{nocedal2006numerical}. 
It is a classical workhorse for nonlinear least-squares and nonlinear regression, widely used for parameter estimation in statistical models and inverse problems \cite{bjorck1996numerical}.
The \textbf{Levenberg--Marquardt method} \cite{levenberg1944method,marquardt1963algorithm} (see also \cite[Section~10.3]{nocedal2006numerical}) improves robustness by minimizing the regularized model $\min_d \tfrac12\|f(x_k)+J_k d\|^2+\tfrac{\mu_k}{2}\|d\|^2$ to stabilize poorly conditioned steps, a concept that was later analyzed and extended \cite{burke1995gauss, nesterov2007modified}. 
In practice, Levenberg-Marquardt underlies standard nonlinear least-squares libraries such as MINPACK and Ceres, and is widely used for curve fitting, parameter estimation, and geometric calibration across engineering and the physical sciences \cite{more1978levenberg,agarwal2012ceres,triggs2000bundle}.
These ideas are generalized into the powerful \textbf{trust-region framework} for smooth objectives $\min_x f(x)$, which at each iteration solves $\min_{\|d\|\le \Delta_k} m_k(d)$ and accepts/rejects by predicted/actual reduction agreement \cite{conn2000trust,fletcher2000practical}; early analyses established convergence even for nonsmooth objectives \cite{yuan1985conditions}.

In parallel, \textbf{proximal methods} emerged as a unifying abstraction for nonsmooth structures.
The classic \textbf{proximal point algorithm}, introduced by Martinet \cite{martinet1970regularisation} and developed by Rockafellar \cite{rockafellar1976monotone}, addresses $\min_x h(x)$ via regularized subproblems $\min_x h(x)+\tfrac{1}{2t}\|x-x^k\|^2$.
While often too expensive to apply directly, the proximal-point method serves as the conceptual basis for many practical splitting schemes 
(Douglas-Rachford, ADMM, Chambolle-Pock), which form the algorithmic core of modern large-scale signal processing, imaging, and distributed optimization methods \cite{eckstein1992douglas,boyd2011admm,combettes2011proximal, chambolle2011pdhg}.
This idea specializes to the influential \textbf{proximal-gradient} (forward--backward) method for convex ``smooth + simple'' objectives $\min_x f(x)+h(x)$, with accelerated variants (FISTA) improving worst-case rates \cite{beck2009fast,nesterov2013gradient}.
Proximal-gradient and its accelerated variants are central to modern data science, underpinning large-scale sparse learning (e.g., Lasso and structured sparsity) and $\ell_1$-type regularized inverse problems in imaging and compressed sensing \cite{tibshirani1996regression,daubechies2004iterative,chambolle2004algorithm,beck2009fast,combettes2011proximal}.
Comprehensive treatments and unifying perspectives appear in \cite{parikh2014proximal,drusvyatskiy2017proximal,bauschke2017convex}.

To achieve the fast local convergence rates (superlinear or quadratic) characteristic of second-order methods, proximal algorithms are extended to incorporate curvature information~\cite{lee2014proximal}. 
For additive convex composites $\min_x f(x)+h(x)$, \emph{proximal Newton} (PN) at $x_k$ solves the local problem $\min_d\ \nabla f(x_k)^\top d+\tfrac12 d^\top H_k d + h(x_k+d)$ with $H_k\approx\nabla^2 f(x_k)$; \emph{proximal quasi-Newton} (PQN) replaces $H_k$ by a curvature approximation $B_k$. 
Under standard local assumptions, such as strong convexity near the solution and a Lipschitz-continuous Hessian for PN, or Dennis-Moré-type conditions for PQN, these methods attain local quadratic and superlinear convergence rates, respectively, mirroring smooth Newton/Quasi-Newton behavior~\cite{lee2014proximal}. 

The proximal-gradient framework has been significantly extended to handle fully nonsmooth \textbf{nonconvex} problems $\min_x f(x)+h(x)$. 
A major theoretical breakthrough in the analysis of nonconvex algorithms came with the application of the \textbf{Kurdyka--\L{}ojasiewicz (K--\L{}) inequality}. The work of Attouch, Bolte, and Svaiter \cite{attouch2013convergence} provided a unified framework for proving convergence of a vast class of descent methods for nonsmooth, nonconvex functions that satisfy the K--\L{} property. This framework guarantees that if an algorithm produces a sequence that satisfies a sufficient decrease and a relative error condition, its iterates will converge to a single stationary point. 
Bolte, Sabach, and Teboulle \cite{bolte2014proximal} provided a prime example of this framework's power by using it to prove the convergence of the widely used Proximal Alternating Linearized Minimization (PALM) algorithm for block problems $\min_{x,y} f(x)+g(y)+H(x,y)$.
A complementary strategy replaces direct descent on $\min_x f(x)+h(x)$ with a smooth model. Themelis, Stella, and Patrinos \cite{themelis2018forward} introduced the \textbf{forward--backward envelope (FBE)}, an exact, strictly continuous penalty for the original problem. Performing a line search on the FBE yields global convergence guarantees for proximal-gradient schemes; moreover, when the search directions satisfy a Dennis--Moré condition (e.g., quasi-Newton updates), one obtains \emph{superlinear} convergence to a critical point \cite{themelis2018forward}.

Second-order information can also be incorporated in the nonconvex regularized setting. 
For fully nonconvex regularized objectives $\min_x f(x)+h(x)$, \emph{proximal quasi-Newton trust-region} (PQNTR) methods minimize the trust-region model
$\min_{\|d\|\le\Delta_k}\ \nabla f(x_k)^\top d+\tfrac12 d^\top B_k d + h(x_k+d)$,
using predicted/actual reduction ratio test and adaptive radii to secure global convergence to first-order points with worst-case complexity $\mathcal{O}(\varepsilon^{-2})$ \cite{aravkin2022proximal}. 
In nonsmooth regularized least squares $\min_x \tfrac12\|f(x)\|^2 + h(x)$, Levenberg-Marquardt (LM) ideas yield regularized subproblems
$\min_d\ \tfrac12\|f(x_k)+J_k d\|^2 + \tfrac{\lambda_k}{2}\|d\|^2 + h(x_k+d)$,
which deliver robust global behavior alongside fast local progress \cite{aravkin2024levenberg}.

The prox-linear (ProxDescent) algorithm extends the proximal gradient method to the more general composite setting, $\min_x \cvx(x) + \cout(\sinn(x))$ by, at each iteration, minimizing the local linearized model $\min_x \cvx(x)+\cout\!\big(\sinn(x_k)+\nabla \sinn(x_k)(x-x_k)\big)+\tfrac{1}{2t}\|x-x_k\|^2$. This approach has a long history, with foundational ideas appearing in the 1980s \cite{yuan1985conditions, burke1985descent,fletcher2009model}, and has been the subject of intense recent study and analysis \cite{cartis2011evaluation,lewis2016proximal,drusvyatskiy2019efficiency}. 
Prox-linear and related composite-model methods have been successfully applied to structured nonlinear inverse problems, including robust phase retrieval and quadratic sensing \cite{duchi2019solving}, robust blind deconvolution and bilinear sensing \cite{charisopoulos2019composite}, and nonconvex constrained trajectory generation via successive convexification \cite{elango2025continuous, kamath2025scvxgen, uzun2024successive}.
A unifying viewpoint for such composite problems is provided by the {Taylor-like model} framework~\cite{drusvyatskiy2021nonsmooth}, which studies algorithms driven by first-order models with two-sided quadratic approximation error.
Within this framework, a \emph{slope error bound} (slope EB) plays a central role. Under the two-sided quadratic model error bounds, a local slope EB for $F$ implies a \emph{step-size error bound} which in turn yields local linear convergence of model-based methods \cite{hu2016convergence,drusvyatskiy2018error,drusvyatskiy2021nonsmooth}. In particular, for broad classes of prox-regular functions, slope EB is equivalent (up to constants) to subdifferential error bounds and to the K--\L{} property with exponent $1/2$; in the convex (and smooth+convex composite) setting, quadratic growth, slope EB, subdifferential EB, KL exponent $1/2$, and the step-size EB are all equivalent notions of local regularity \cite{bolte2017error,drusvyatskiy2018error,drusvyatskiy2021nonsmooth}. Thus, in practice, any one of these conditions is sufficient to obtain a step-size EB and hence a local linear rate.

Second-order theory for the composite problem $\min_{x}\, \cvx(x)+\cout(\sinn(x))$ is also well developed. 
Classical local convergence theory for Newton and quasi-Newton methods for the composite problem $\min_{x}\, \cvx(x)+\cout(\sinn(x))$ uses second-order epi-derivatives to establish optimality conditions and sensitivity results \cite{rockafellar1989second,rockafellar1998variational}. 
More recent analyses frame the problem using generalized equations \cite{robinson1980strongly} and establish \emph{strong metric (sub)regularity} of the associated KKT mapping, particularly for piecewise linear convex functions \cite{cibulka2018strong}. 
The work of \cite{burke2018strong} extends this framework to piecewise linear-quadratic (PLQ) structures (see~\cite{rockafellar1998variational} and~\cite{aravkin2013sparse}) by augmenting the generalized equations approach with techniques from partial smoothness \cite{lewis2002active}, enabling Newton-style steps with superlinear or quadratic local rates under appropriate regularity \cite{burke2018line,burke2018strong}. 
When higher-order derivatives of the smooth component are accessible, higher-order composite methods can further sharpen oracle complexity bounds and have been shown to substantially reduce iteration counts in practice \cite{doikov2022high}.

Relative to the prox-linear method, prox-convex explicitly accommodates an additional smooth–convex composite term $\sout(\Cin(x))$, thereby allowing smooth couplings of convex (possibly nonsmooth) features.
Such terms naturally arise when signal temporal logic specifications are encoded via smooth robustness maps acting on convex predicate functions~\cite{donze2010robust,uzun2024optimization}. 
In powered descent guidance~\cite{uzun2025sequential}, implication-type specifications (state-triggered constraints) give rise to objectives and constraints of the form $s(R(x))$.
Likewise, eventual satisfaction requirements for convex range, field-of-view, and direction constraints in perception-aware motion planning for quadrotor flight are modeled through the same smooth–convex composite structure~\cite{uzun2025motion}.
Such structures have also been studied by Ochs et al. in a line of work on nonconvex, nonsmooth optimization~\cite{ochs2015iteratively,ochs2019nonsmooth}.
In \cite{ochs2015iteratively}, they propose iteratively reweighted majorization-minimization schemes for problems of the form $F_1(x)+F_2(G(x))$, with convex $F_1$, coordinatewise convex $G$, and nonconvex but coordinatewise nondecreasing $F_2$. A key contribution is the design of convex majorizers for popular nonconvex penalties (concave and convex-concave on $\mathbb{R}_+)$, leading to convex subproblems. This framework is flexible within those classes, but it relies on constructing explicit convex models for $F_2$, which becomes difficult outside the specific penalty families they treat. Moreover, the composite structure $F_2\circ G$ must still be amenable to efficient convex solvers.
In follow-up work, Ochs et al.~\cite{ochs2019nonsmooth} propose a unifying Bregman-proximal line-search framework: at each iteration, a convex model together with a Bregman distance generates a trial point, and an Armijo-type condition on the true objective enforces descent. This abstract scheme covers, among other cases, composite objectives $F_2\circ G$ with smooth, coordinatewise nondecreasing outer functions $F_2$ and convex $G$ (see, e.g.,~\cite[Ex.~36]{ochs2019nonsmooth}), while more general smooth but sign-indefinite $F_2$ are not treated in detail. Their analysis, based on the K--\L{} property, unifies a broad class of algorithms and guarantees convergence to critical points under standard assumptions (proper, closed objective and mild model conditions). The price of this generality is a mainly asymptotic theory, without explicit iteration-complexity bounds or quantitative links between model decrease, inexact model solves, and $\varepsilon$-stationarity.

\paragraph*{Main contributions:}
We propose prox-convex, an extension of the prox-linear method to
$F(x)=\cvx(x)+\cout(\sinn(x))+\sout(\Cin(x))$ that \emph{linearizes only the smooth maps} $\sinn,\sout$ while \emph{preserving the convex structure} in $\cvx,\cout$ and in the convex inner map $\Cin$. Compared with fully linearizing $\sout\circ\!\Cin$, this structure-preserving design is especially effective when $\Cin$ is \emph{highly curved, ill-conditioned, or nonsmooth}: the local model is tighter, accepted steps tend to be larger, the required proximal regularization is milder, and progress per solve improves.

\smallskip\noindent\emph{Adaptive globalization.}
At iterate $x_k$, we minimize a convex model plus a quadratic term $\tfrac12\|x-x_k\|_{Q_k}^2$ with $Q_k=\mu_kI+H_k^+\succ0$. The scalar proximal weight $\mu_k$ is chosen by a predicted/actual reduction ratio test that balances model error bounds and step length, yielding a monotone decrease on accepted steps while avoiding both instability and overly conservative progress \cite{conn2000trust,burke2018line}.

\smallskip\noindent\emph{Second-order injection.}
When available, we \emph{inject curvature} via $H_k^+\succeq0$ using $\mathcal C^2$ components of $\sinn$ and $\sout$, which tightens the model, improves conditioning, and accelerates local convergence \cite{burke1987second}.

\smallskip\noindent\emph{Theory backbone and guarantees.}
(i) Subdifferential regularity \cite{rockafellar1998variational,clarke1983optimization}: under the monotone-or-smooth condition, $F$ is regular so $\widehat\partial F=\partial F$ and stationarity is unambiguous (Theorem~\ref{thm:regular_mono_or_smooth}).
(ii) Convergence and complexity with an adaptive proximal metric $Q_k$: under mild Lipschitz/smoothness constants (Assumption~\ref{asm:TA}), a spectral threshold ensures acceptance, only finitely many rejections can occur, and the proximal metrics remain uniformly conditioned (Lemmas~\ref{lem:accept-Q}--\ref{lem:Qk-spectral-bounds}); accepted steps yield sufficient decrease and an $O(\varepsilon^{-2})$ bound (in accepted steps) to drive $\|\mathcal{G}_{Q_k}(x_k)\|\le\varepsilon$ (Theorem~\ref{thm:suff-dec-Q}).
(iii) Linear convergence from error bounds: building on the error-bound analysis of the prox-linear method with a fixed scalar proximal weight in \cite{drusvyatskiy2018error}, we extend the theory in three directions: we allow a matrix-valued, iteration-dependent proximal metric $Q_k$, work with an asymmetric quadratic model error bounds, and derive corresponding matrix-valued gradient inequalities for the metric prox-gradient. Under a \emph{metric} step-size error bound, this yields local $Q$-linear convergence of the function values for prox-convex (Theorem~\ref{thm:qlinear-pcx}); in the special case of a fixed scalar metric $Q_k=t^{-1}I$ and symmetric model error, our rate constant reduces to that of the prox-linear algorithm (Remark~\ref{rem:qlinear-scalar}).
(iv) Taylor-like framework and stationarity: combining quadratic model error bounds with boundedness (hence existence of cluster points) and asymptotic regularity $\|x_{k+1}-x_k\|\to 0$, and invoking the Taylor-like model framework~\cite{drusvyatskiy2021nonsmooth}, we guarantee that every cluster point of the accepted iterates is limiting-stationary (Theorem~\ref{thm:adaptive-stationary}). Under the regularity conditions of Theorem~\ref{thm:regular_mono_or_smooth}, this further implies Fréchet stationarity. The same framework also covers inexact subproblem solves and provides a model-decrease stopping rule.

\noindent\textbf{Notation.}
We use $\|\cdot\|$ for the Euclidean norm on vectors and the induced spectral norm on matrices, and $(a)_+ := \max\{a,0\}$. 

For a closed function $F:\mathbb{R}^m \to \mathbb{R}\cup\{+\infty\}$, its limiting slope at $\bar x$ is
\[
|\nabla F|(\bar x)\ :=\ \limsup_{x\to\bar x}\frac{\big(F(\bar x)-F(x)\big)_+}{\|x-\bar x\|},
\]
which reduces to $\|\nabla F(\bar x)\|$ in the $\mathcal C^1$ case.

% \newpage

\section{Prox-convex} \label{sec:pcx}
This section presents the prox-convex method. After collecting the standing assumptions, we define the convex local subproblem solved at each iteration and then describe an adaptive variable-metric choice $Q_k\succ0$, with optional curvature injection to improve conditioning and local progress.

\subsection{Assumptions}
We begin by stating a structural condition that rules out ``concave-through-a-kink'' pathologies (e.g.\ $x\mapsto -\|x\|_{1}$) and ensures regularity.
Informally, for each coordinate $i$ we require that either the outer map $\sout$ is locally nondecreasing in the $i$-th direction, or the inner map $\cin_i$ is smooth near the point of interest.
\begin{assumption}[Monotone-or-smooth per coordinate]
\label{ass:mono_or_smooth}
For each $\bar x\in\dom F$, there exists a neighborhood $W$ of $\Cin(\bar x)$ such that,
for every index $i\in\{1,\ldots,n\}$, the following holds:
if $\cin_i$ is not $\mathcal C^1$ on any neighborhood of $\bar x$, then
\[
  \nabla_i\sout(z)\ \ge\ 0 \quad \text{for all } z\in W.
\]
\end{assumption}
In particular, whenever $\nabla_i\sout(\Cin(\bar x))<0$, Assumption~\ref{ass:mono_or_smooth} guarantees that $\cin_i$ is $\mathcal C^1$ near $\bar x$, so $\nabla\cin_i(\bar x)$ is well-defined and can be used to build the convex model on coordinates where $\nabla_i\sout(\Cin(\bar x))<0$.

%=========================
% Standing assumptions
%=========================
We next record mild Lipschitz and smoothness conditions on the constituent mappings, which will be used to control model errors, descent estimates, and conditioning of the proximal metric in the theoretical analysis.

\newpage

\begin{assumption}[Standing assumptions]\label{asm:TA}
Let $x_0\in \dom F$ be the starting point and define the initial level set
\[
\mathcal{X}_0 := \{x\in \dom F : F(x)\le F(x_0)\}.
\]
Assume $\mathcal{X}_0$ is nonempty and compact. Moreover, assume:
\begin{itemize}
    \item $\cout : \mathbb{R}^d \to \mathbb{R}$ is convex and $L_{\cout}$--Lipschitz;
    \item $\sinn : \mathbb{R}^m \to \mathbb{R}^d$ is $\mathcal{C}^1$ with $\beta_{\sinn}$--Lipschitz Jacobian;
    \item $\sout : \mathbb{R}^n \to \mathbb{R}$ is $\mathcal{C}^1$ with $\beta_{\sout}$--Lipschitz Jacobian;
    \item Each $\cin_i:\mathbb{R}^m\to\mathbb{R}$ is convex and $L_{\cin_i}$--Lipschitz, so that $\Cin:\mathbb{R}^m \to \mathbb{R}^n$ is $L_{\Cin}$--Lipschitz  (w.r.t.\ $\|\cdot\|_2$), where $L_{\Cin}:=\|(L_{\cin_1},\ldots,L_{\cin_n})\|_2$.
\end{itemize}

If there exists a point $x\in\mathcal{X}_0$ and an index $i$ such that $\nabla_i\sout(\Cin(x))<0$, then preserving convexity of the local model may require linearizing the corresponding component $\cin_i$ at such points. Therefore, we additionally assume:
\begin{itemize}
    \item $\|\nabla\sout(y)\|\le L_{\sout}$ for all $y \in \{\Cin(x) : F(x) \leq F(x_0)\}$;
    \item Any component $\cin_i$ that is ever linearized on the level set $\mathcal{X}_0$ is $\mathcal{C}^1$ with a $\beta_{\cin_i}$-Lipschitz gradient. Let $\mathcal I^-$ denote the set of all such indices, and define
    \[
      \beta_{\Cin} := \bigg(\sum_{i \in \mathcal I^-} \beta_{\cin_i}^2\bigg)^{1/2}.
    \]
\end{itemize}
\end{assumption}

% \newpage

\subsection{Method}
Given an iterate $x_k$, we construct a convex model of~\eqref{eq:def_F} by:
(i) linearizing the smooth inner map $\sinn$ inside the convex function $\cout$ (as in classical convex-composite modeling),
and (ii) linearizing $\sout$ while treating each convex coordinate map $\cin_i$ according to the sign of $\nabla_i\sout(\Cin(x_k))$ so that convexity is preserved. We define the convex model of \eqref{eq:def_F} at iteration $k$:
\begin{equation*}
    F(x; x_k) = \cvx(x)
    + \cout\big(\sinn(x_k) + \nabla \sinn(x_k)(x - x_k)\big)
    + \sout\big(\Cin(x_k)\big)
    + \sum_{i=1}^n \nabla_i \sout\big(\Cin(x_k)\big)\, \Phi_i(x; x_k),
\end{equation*}
where
\begin{equation*}
    \Phi_i(x; x_k) =
    \begin{cases}
        \cin_i(x) - \cin_i(x_k) 
            & \text{if } \nabla_i \sout(\Cin(x_k)) \geq 0, \\[4pt]
        \nabla \cin_i(x_k)^\top (x - x_k) & \text{if } \nabla_i \sout(\Cin(x_k)) < 0.
    \end{cases}
\end{equation*}
We denote the set of \emph{linearized channels} at $x_k$ as $\mathcal I_k^- \;:=\; \big\{\,i\in\{1,\dots,n\} : \nabla_i\sout(\Cin(x_k))<0\,\big\}.$

\begin{remark} \label{rem:negative-channel}
If $\nabla_i\sout(\Cin(x_k)) \ge 0$, then the contribution
$
x\ \mapsto\ \nabla_i\sout(\Cin(x_k)) \big(\cin_i(x)-\cin_i(x_k)\big)
$
is convex (a nonnegative scalar times a convex function), so we keep the convex structure intact in the subproblem.
If $\nabla_i\sout(\Cin(x_k))<0$, then
$
x\ \mapsto\ \nabla_i\sout(\Cin(x_k))\big(\cin_i(x)-\cin_i(x_k)\big)
$
is \emph{concave}; to preserve convexity of the model we replace this concave term by its affine first-order expansion
$
x\ \mapsto\ \nabla_i\sout(\Cin(x_k))\,\nabla\cin_i(x_k)^\top(x-x_k),
$
which is a global convex \emph{majorant} of that concave channel. Thus $x\mapsto F(x;x_k)$ is convex.
\end{remark}

Next, we define the proximal model at iteration $k$ as
\begin{equation*}
    F_{Q_k}(x; x_k) = F(x; x_k) + \frac{1}{2}\|x - x_k\|_{Q_k}^2, \text{ where $Q_k = \mu_k I + H_k^+$}
\end{equation*}
is a positive definite matrix that may vary across iterations. Each $Q_k$ combines a scalar proximal weight $\mu_k>0$ with a curvature approximation $H_k^+ \succeq 0$ that captures local second-order information. Since $F(\cdot;x_k)$ is convex and $Q_k\succ0$, the function $x\mapsto F_{Q_k}(x;x_k)$ is $\lambda_{\min}(Q_k)$-strongly convex.

The prox-convex update at $x_k$ is the unique minimizer of the strongly convex model $F_{Q_k}(x;x_k)$:
\begin{equation*}
    x_{k+1} = \arg\min_{x} F_{Q_k}(x; x_k).
\end{equation*}
This subproblem can be solved efficiently either by interior-point methods
\cite{karmarkar1984new,nesterov1994interior,mehrotra1992implementation}
or by first-order primal-dual schemes
\cite{chambolle2011pdhg,boyd2011admm,yu2022proportional}, depending on the structure of $\cvx$ and the $\cin_i$.

\begin{remark}[Choosing inner vs.\ outer linearization]
For composite functions, when one mapping is convex and both mappings are $\mathcal{C}^1$, linearizing only the nonconvex component while keeping the convex component intact yields a convex subproblem; under standard smoothness and Lipschitz assumptions, Proposition~\ref{prop:errors} shows that this single-component linearization is uniformly tighter than linearizing both components.

When both mappings are convex and $\mathcal{C}^1$, a practical guideline is to keep the more curved component exact and linearize the less curved one. Even in the special case where the outer mapping is nondecreasing (so the composite is convex; see \cite[(3.10)]{boyd2004convex}), a linearized formulation may still be preferable for compatibility with standard convex solvers.
\end{remark}

% \newpage

\subsection{Adaptive proximal metric}
This subsection specifies the proximal metric $Q_k\succ0$.
We describe a practical construction that combines a scalar damping term with optional curvature information, and we show how to adapt the scalar weight online using predicted-versus-actual decrease.

\subsubsection{Construction of the Hessian Approximation}
\label{subsec:hessian_construction}
When $\sinn$ or $\sout$ are twice continuously differentiable, one can construct $H_k^+$ from their Hessians to enhance curvature adaptation. If such Hessian information is unavailable or computationally intractable, $H_k^+$ can be safely set to zero. 

\paragraph{Curvature for $\cout(\sinn(x))$ (inner-only).}
Since $\cout$ is kept \emph{exact} in the subproblem, we do not include any
outer curvature. When $\sinn$ is $\mathcal C^2$ and
tractable, we add the inner curvature weighted by a subgradient of $\cout$:
\[
H_{\sinn, k}
\;:=\;
\sum_{j=1}^d y_j\,\nabla^2\sinn_j(x_k),
\qquad
y \in \partial\cout\!\big(\sinn(x_k)\big).
\]
If $\cout$ is smooth, we take $y=\nabla\cout(\sinn(x_k))$. If $\sinn$ is not
$\mathcal C^2$ or Hessians are intractable, we set
$H_{\sinn,k}=0$.

\paragraph{Curvature for $\sout(\Cin(x))$ (outer pullback + inner compensation on linearized channels).}
When $\sout$ is $\mathcal C^2$, we include the outer pullback; when, in addition, a linearized channel $i\in\mathcal I_k^-$ has $\cin_i\in\mathcal C^2$ and a tractable Hessian, we also include its inner curvature (which is negative semidefinite once weighted by the negative outer component):
\[
H_{\sout, k}
\;:=\;
\underbrace{G_{\Cin}(x_k)^\top\,
\nabla^2\sout(\Cin(x_k))\,G_{\Cin}(x_k)}_{\text{outer pullback (if $\sout\in\mathcal C^2$)}}
\;+\;
\underbrace{\sum_{i\in\mathcal I_k^-}
\big[\nabla\sout(\Cin(x_k))\big]_i\,\nabla^2\cin_i(x_k)}_{\text{inner compensation on linearized channels (if $\cin_i\in\mathcal C^2$)}},
\]
where
\[
G_{\Cin}(x_k)
:=
\begin{bmatrix}
g_{1,k}^\top \\[-2pt]
\vdots \\[-2pt]
g_{n,k}^\top
\end{bmatrix},
\quad
g_{i,k} \in \partial \cin_i(x_k).
\]
When each $\cin_i$ is smooth, $g_{i,k} = \nabla \cin_i(x_k)$ and $G_{\Cin}(x_k) = J_{\Cin}(x_k)$; for nonsmooth $\cin_i$, the stacked subgradients form an affine first-order model of $\Cin(x)$. For channels with $[\nabla\sout(\Cin(x_k))]_i \ge 0$ we keep $\cin_i$ \emph{exact} in the subproblem and therefore do not add its (positive) curvature. If $\sout$ is not $\mathcal C^2$ or the required Hessians are unavailable, we set $H_{\sout,k}=0$.

\medskip
\noindent\textbf{Final metric and bounds.}
We assemble the curvature as
\[
H_k^+ \;:=\; \Pi_{\mathbb{S}_+}\!\Big( H_{\sinn, k} \;+\; H_{\sout, k} \Big), \quad Q_k = \mu_k I + H_k^+,
\]
where $\Pi_{\mathbb{S}_+}$ denotes the projection onto the positive semidefinite cone.

\begin{remark}[Justification of the Hessian construction]\label{rem:curv-just}
Under the local second-order smoothness conditions assumed for the Hessian-based model (in particular, bounded/Lipschitz Hessians; see Assumption~\ref{asm:H_smth}), the unprojected Hessian model is locally third-order accurate. After PSD projection, the only second-order discrepancy is the nonpositive quadratic ``projection gap'', while the remaining error terms are third order or higher (see Corollary~\ref{cor:joint-proj}).
\end{remark}

% \newpage

\subsubsection{Adaptive scalar proximal weight} \label{subsec:adp_weight}
The local error bounds of each model are governed by two varying quantities, the outer and inner curvatures of the composite function, so any \emph{static} proximal weight inevitably becomes either unstable (too small in highly curved regions) or overly conservative (too large in flat regions). Our adaptive prox-convex scheme therefore tunes the proximal metric $Q_k=\mu_kI+H_k^+$ from iteration to iteration using the agreement between \emph{predicted} and \emph{actual} decrease: when the model is accurate, we decrease the scalar proximal weight $\mu_k$ to take bolder steps; when it is not, we increase $\mu_k$ to restore reliability, implicitly tracking the unknown local constants without ever computing them.

Whenever $\sinn$ or $\sout$ is $\mathcal{C}^2$ and Hessian evaluation is tractable, we enrich $Q_k$ with PSD curvature blocks $H_k^+\succeq0$, which tightens the model, improves conditioning, and sharpens the local decrease rates; otherwise we fall back to first-order damping while preserving $Q_k\succ0$. This combination yields robust step acceptance with larger effective steps in benign regions and safe contraction near strong inner curvature, translating into faster and more reliable convergence in practice.

The trust-region idea dates back to the 1970s \cite{powell1970new} and is classically used to update the radius \cite[Ch.~6]{conn2000trust}; analogous ideas update scalar proximal weights \cite{cartis2011evaluation}. 
Here we adopt an adaptive scheme that scales the \emph{proximal metric} in the quadratic term rather than a scalar alone. The resulting adaptive prox-convex algorithm is presented in Algorithm~\ref{alg:adaptive-pcx}.

\begin{algorithm}[htbp]
\caption{Adaptive prox-convex with proximal metric update}
\label{alg:adaptive-pcx}
\begin{algorithmic}[1]
\State Input $x_0$, $\mu_0>0$, $\mu_{\min}>0$, thresholds $0<\alpha_1<\alpha_2<1$, factors $\nu_{\rm inc}>1>\nu_{\rm dec}>0$
\For{$k=0,1,2,\dots$}
  \State Build $H_k^+\succeq0$;
  \Repeat \Comment{inner loop (retries on rejection)}
    \State Set $Q_k\gets \mu_k I+H_k^+$;
    \State $x_k^+ = \arg\min_x \big\{F(x;x_k)+\tfrac12\|x-x_k\|_{Q_k}^2\big\}$;
    \State $\Pred_k=F(x_k)-F_{Q_k}(x_k^+;x_k)$;\quad $\Act_k=F(x_k)-F(x_k^+)$ \Comment{predicted and actual decreases}
    \If{$\Pred_k=0$ or $\|Q_k(x_k-x_k^+)\|\le \epsilon_{\mathrm{term}}$} \Comment{termination criteria}
        \State \textbf{return} $x_k$
    \EndIf
    \State $\rho_k=\Act_k/\Pred_k$  \Comment{acceptance ratio}
    \If{$\rho_k<\alpha_1$} \Comment{poor agreement}
        \State $\mu_k\gets \nu_{\rm inc}\mu_k$ 
    \EndIf
  \Until{$\rho_k\ge \alpha_1$} \Comment{acceptance criteria}
  \State $x_{k+1}\gets x_k^+$
    \If{$\rho_k>\alpha_2$} \Comment{excellent agreement}
      \State $\mu_{k+1}\gets \max\{\mu_{\min},\nu_{\rm dec}\mu_k\}$ 
    \Else \Comment{moderate agreement}
      \State $\mu_{k+1}\gets \mu_k$ 
    \EndIf
\EndFor
\end{algorithmic}
\end{algorithm}

% \newpage

\section{Theoretical Analysis}
This section establishes global convergence and complexity guarantees for prox-convex.
We proceed as follows.
(i) We first prove subdifferential regularity $\widehat\partial F(\bar x)=\partial F(\bar x)$ (Theorem~\ref{thm:regular_mono_or_smooth}) under the monotone-or-smooth condition (Assumption~\ref{ass:mono_or_smooth}).
(ii) We state mild smoothness/Lipschitz assumptions (Assumption~\ref{asm:TA}).
(iii) For the actual prox-convex model, we derive two-sided quadratic error bounds with $L_U:=L_{\cout}\beta_{\sinn}+L_{\Cin}^2\beta_{\sout}$ and $L_L:=L_U+L_{\sout}\beta_{\Cin}$ (Lemma~\ref{lem:model_two_sided}).
(iv) We analyze the acceptance mechanism: a simple spectral threshold guarantees acceptance (Lemma~\ref{lem:accept-Q}); at most finitely many consecutive rejections occur (Lemma~\ref{lem:finite-reject-Q}); and the adaptive proximal metrics $Q_k$ enjoy uniform spectral bounds (Lemma~\ref{lem:Qk-spectral-bounds}).
(v) As a consequence, accepted steps achieve a sufficient decrease and an $O(\varepsilon^{-2})$ bound (in accepted steps) for reducing the norm of the metric prox-gradient $\|\mathcal{G}_{Q_k}(x_k)\|$ below $\varepsilon$ (Theorem~\ref{thm:suff-dec-Q}).
(vi) We then establish \emph{linear convergence} under local error bounds: extending the error-bound analysis of the prox-linear method with a fixed \emph{scalar} proximal term in~\cite{drusvyatskiy2018error} to our \emph{time-varying matrix} metric $Q_k$ and asymmetric two-sided model constants $(L_L,L_U)$, we derive matrix-valued gradient inequalities for the metric prox-gradient and show that a \emph{metric} step-size error bound implies local $Q$-linear decay of function values (Theorem~\ref{thm:qlinear-pcx}); in the fixed-metric scalar case $Q_k=t^{-1}I$ with symmetric model constants, the resulting rate constant reduces to that of the prox-linear algorithm (Remark~\ref{rem:qlinear-scalar}).
(vii) Finally, combining our quadratic model error bounds with sufficient decrease (hence asymptotic regularity $\|x_{k+1}-x_k\|\to 0$) and boundedness of the accepted iterates, and invoking the Taylor-like model framework of Drusvyatskiy--Ioffe--Lewis~\cite{drusvyatskiy2021nonsmooth}, we prove that every cluster point of the accepted iterates is limiting-stationary (Theorem~\ref{thm:adaptive-stationary}). Under the regularity conditions of Theorem~\ref{thm:regular_mono_or_smooth}, this further implies Fr\'echet stationarity. The same framework also justifies a model-decrease termination rule and robustness to inexact subproblem solves.

% \newpage

\subsection{Subdifferential Regularity}
We begin by ensuring that the various subdifferential notions agree at the points of interest, so that ``stationarity'' can be stated unambiguously for limit points of the iterates. For a proper, lower semicontinuous function $F$, one typically distinguishes the Fr\'echet (regular) subdifferential $\widehat\partial F$, the limiting (Mordukhovich) subdifferential $\partial F$, and, for locally Lipschitz $F$, the Clarke subdifferential $\partial^{\circ}F$ \cite{clarke1983optimization,rockafellar1998variational}. These coincide for smooth or convex functions, but in the nonsmooth, nonconvex setting the inclusions
\[
\widehat\partial F(\bar x)\ \subset\ \partial F(\bar x)\ \subset\ \partial^{\circ}F(\bar x)
\]
may be strict. A simple example is $F(x)=-\|x\|_1$ at $x=0$, where $\widehat\partial F(0)=\emptyset$, $\partial F(0)=\{-1,1\}$ in one dimension (or the set of sign vectors in higher dimensions), and $\partial^{\circ}F(0)=[-1,1]$.
The following result shows that, under our monotone-or-smooth condition, $F$ is \emph{subdifferentially regular} at $\bar x$, so that $\widehat\partial F(\bar x)=\partial F(\bar x)$; in particular, all standard first-order stationarity conditions agree at $\bar x$.

\begin{theorem}[Subdifferential regularity]\label{thm:regular_mono_or_smooth}
Suppose Assumption~\ref{ass:mono_or_smooth} holds at $\bar x$.
Let $W$ be the corresponding neighborhood of $\Cin(\bar x)$ and define
\[
\mathcal I_{\bar{x}}\ :=\ \Big\{\,i\in\{1,\dots,n\} : \nabla_i\sout(z)\ge 0\ \text{for all } z\in W\,\Big\},
\]
so that for every $i\notin \mathcal I_{\bar{x}}$ the channel $\cin_i$ is $\mathcal{C}^1$ on a neighborhood of $\bar x$.
Then $F$ is subdifferentially regular at $\bar x$; in particular, its Fr\'echet and limiting
subdifferentials coincide: $\widehat\partial F(\bar x)=\partial F(\bar x)$. Moreover,
\[
\partial F(\bar x)
\;=\;
\partial\cvx(\bar x)
\;+\;
\nabla\sinn(\bar x)^{\!\top}\,\partial\cout(\sinn(\bar x))
\;+\;
\sum_{i\in \mathcal I_{\bar{x}}} \nabla_i\sout(\Cin(\bar x))\,\partial\cin_i(\bar x)
\;+\;
\sum_{i\notin \mathcal I_{\bar{x}}} \nabla_i\sout(\Cin(\bar x))\,\nabla\cin_i(\bar x),
\]
where the sums are Minkowski sums.
\end{theorem}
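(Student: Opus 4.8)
The plan is to split $F=g+f_2+f_3$ with $f_2:=h\circ C$ and $f_3:=s\circ R$, establish Clarke/subdifferential regularity of each summand together with its subdifferential, and then combine via the sum rule for regular functions. The term $g$ is convex and hence regular, with $\partial g(\bar x)$ its convex subdifferential. The term $f_2$ is the composition of a finite-valued convex (hence locally Lipschitz) $h$ with a $\mathcal C^1$ map $C$; since $\dom h=\R^d$ the composition qualification is automatic, and the standard convex-composite chain rule \cite{rockafellar1998variational} gives that $f_2$ is regular with $\partial f_2(\bar x)=\nabla C(\bar x)^\top\partial h(C(\bar x))$.

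The crux is $f_3=s\circ R$, which is locally Lipschitz ($s\in\mathcal C^1$, each $r_i$ Lipschitz), and here the monotone-or-smooth assumption does the work. I would compute the ordinary one-sided directional derivative and the Clarke directional derivative and show they coincide. Because $s\in\mathcal C^1$ and each $r_i$ is convex (so directionally differentiable), the ray derivative $f_3'(\bar x;w)=\sum_i\nabla_i s(R(\bar x))\,r_i'(\bar x;w)$ exists, and one always has $f_3^\circ(\bar x;w)\ge f_3'(\bar x;w)$. For the reverse inequality, a mean-value expansion of $s$ with continuity of $\nabla s$ reduces $f_3^\circ(\bar x;w)$ to $\limsup_{y\to\bar x,\,t\downarrow0}\sum_i\nabla_i s(R(\bar x))\,\tfrac{r_i(y+tw)-r_i(y)}{t}$, since the gradient-difference remainder is the product of a vanishing factor and a quotient bounded by $L_{\Cin}\|w\|$, hence negligible in the $\limsup$. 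Subadditivity of $\limsup$ then bounds this by the sum of coordinatewise limsups. For $i\notin\mathcal I_{\bar x}$ the map $r_i$ is $\mathcal C^1$, so its difference quotient converges and contributes $\nabla_i s(R(\bar x))\langle\nabla r_i(\bar x),w\rangle$ irrespective of the sign of $\nabla_i s$; for $i\in\mathcal I_{\bar x}$ the coefficient $\nabla_i s(R(\bar x))\ge0$ (as $R(\bar x)\in W$), so this nonnegative constant pulls out of the $\limsup$ and, by regularity of the convex $r_i$, the coordinate limsup equals $\nabla_i s(R(\bar x))\,r_i^\circ(\bar x;w)=\nabla_i s(R(\bar x))\,r_i'(\bar x;w)$. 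This gives $f_3^\circ(\bar x;w)\le f_3'(\bar x;w)$, hence equality and regularity. The support-function identity then yields $\partial f_3(\bar x)=\sum_{i\in\mathcal I_{\bar x}}\nabla_i s(R(\bar x))\,\partial r_i(\bar x)+\sum_{i\notin\mathcal I_{\bar x}}\nabla_i s(R(\bar x))\,\nabla r_i(\bar x)$, using that the support function of a Minkowski sum is the sum of support functions and that the support function of $c\,\partial r_i$ equals $c$ times that of $\partial r_i$ when $c\ge0$.

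Finally, since $f_2$ and $f_3$ are locally Lipschitz near $\bar x$, their horizon subdifferentials are trivial, so the qualification for the limiting sum rule holds automatically; for regular functions the sum rule then delivers both regularity of $F$ and $\partial F(\bar x)=\partial g(\bar x)+\partial f_2(\bar x)+\partial f_3(\bar x)$, which is precisely the claimed Minkowski-sum formula, and in particular $\widehat\partial F(\bar x)=\partial F(\bar x)$.

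I expect the main obstacle to be the $\limsup$ manipulation for $f_3^\circ$: one must justify uniform negligibility of the gradient-difference remainder and, crucially, combine subadditivity of $\limsup$ with the sign condition $\nabla_i s(R(\bar x))\ge0$ on $\mathcal I_{\bar x}$ (and with genuine limits on its complement) to obtain the \emph{one-sided} bound $f_3^\circ\le f_3'$. This is exactly where Assumption~\ref{ass:mono_or_smooth} is indispensable: a negative $\nabla_i s$ paired with a nonsmooth $r_i$ would reverse the coordinatewise inequality (the $x\mapsto-\|x\|_1$ pathology), destroying regularity.
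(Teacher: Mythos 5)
Your proof is correct, and its core argument is genuinely different from the paper's. Both proofs use the same three-way decomposition and finish identically: convexity handles $\cvx$, the basic convex-composite chain rule handles $\cout\circ\sinn$, and the locally-Lipschitz horizon qualification plus the sum rule for regular functions assembles $F$. The difference is in the crux term $\sout\circ\Cin$. The paper stays inside the Rockafellar--Wets calculus: it invokes the equality case of the extended chain rule \cite[Thm.~10.49]{rockafellar1998variational}, which reduces everything to regularity of the scalarized map $y^\top\Cin$ with $y=\nabla\sout(\Cin(\bar x))$; the monotone-or-smooth assumption enters by making each summand $y_i\cin_i$ either convex or smooth. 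You instead prove Clarke regularity by hand: a mean-value expansion of $\sout$ reduces the Clarke derivative of the composition to a $\limsup$ of weighted inner difference quotients, and the assumption enters through the sign condition that lets nonnegative coefficients pass through the $\limsup$ (while smooth channels contribute genuine limits), giving $f_3^\circ(\bar x;\cdot)=f_3'(\bar x;\cdot)$; the subdifferential formula then follows from support-function calculus. Your route is more elementary and self-contained (no appeal to the coderivative chain rule) and makes visible exactly where the assumption is indispensable, at the cost of needing two standard bridges you should cite explicitly: (i) for locally Lipschitz functions, Clarke regularity is equivalent to subdifferential regularity in the Rockafellar--Wets sense, with the limiting and Clarke subdifferentials coinciding (e.g.\ \cite[Thm.~9.16]{rockafellar1998variational}), which is what converts your directional-derivative identity into the claim $\widehat\partial F(\bar x)=\partial F(\bar x)$; and (ii) your bound on the difference quotients should use the \emph{local} Lipschitz constant of $\Cin$ near $\bar x$ (automatic for finite-valued convex functions) rather than the global constant $L_{\Cin}$, since Assumption~\ref{asm:TA} is not among this theorem's hypotheses. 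Neither point is a gap; both are routine.
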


\begin{proof}
By convex analysis, $\cvx$ is regular with $\widehat\partial\cvx(\bar x)=\partial\cvx(\bar x)$.
Since $\cout$ is convex and $\sinn$ is $\mathcal{C}^1$, the basic chain rule in \cite[Thm.~10.6]{rockafellar1998variational} yields
\[
\widehat\partial(\cout(\sinn(\bar x)))=\partial(\cout(\sinn(\bar x)))=\nabla\sinn(\bar x)^{\!\top}\,\partial\cout(\sinn(\bar x)).
\]

Since $\sout$ is $\mathcal C^1$, it is regular at $\Cin(\bar x)$ and $\partial\sout(\Cin(\bar x))=\{\nabla\sout(\Cin(\bar x))\}$. 
Moreover, each $\cin_i$ is convex finite, hence $\Cin$ is locally Lipschitz (strictly continuous) near $\bar x$. 
Thus the {equality} case of the extended chain rule \cite[Thm.~10.49]{rockafellar1998variational} applies provided $y^\top\Cin$ is regular for $y=\nabla\sout(\Cin(\bar x))$. 
By Assumption~\ref{ass:mono_or_smooth}, for each $i$ either $y_i\ge 0$ so $y_i\cin_i$ is convex (regular), or $\cin_i$ is $\mathcal C^1$ near $\bar x$ so $y_i\cin_i$ is smooth (regular). 
Hence, $y^\top\Cin$ is regular and
\[
\partial(\sout(\Cin(\bar x))) 
= D^\star\Cin(\bar x)\big[\partial\sout(\Cin(\bar x))\big]
= \sum_{i\in \mathcal I_{\bar{x}}} \nabla_i\sout(\Cin(\bar x))\,\partial\cin_i(\bar x)
  + \sum_{i\notin \mathcal I_{\bar{x}}} \nabla_i\sout(\Cin(\bar x))\,\nabla\cin_i(\bar x),
\]
where $D^\star\Cin(\bar x)$ denotes the (limiting) coderivative of the mapping $\Cin$ at $\bar x$. Therefore, $\sout\circ\Cin$ is regular at $\bar x$.

Finally, $\cout\circ\sinn$ and $\sout\circ\Cin$ are locally Lipschitz, hence 
$\partial^\infty(\cout\circ\sinn)(\bar x)=\partial^\infty(\sout\circ\Cin)(\bar x)=\{0\}$; 
the horizon qualification in \cite[Cor.~10.9]{rockafellar1998variational} is therefore automatic. 
Since $\cvx$, $\cout\circ\sinn$, and $\sout\circ\Cin$ are regular at $\bar x$, the addition rule yields
\[
\partial F(\bar x)=\partial\cvx(\bar x)+\partial(\cout(\sinn(\bar x)))+\partial(\sout(\Cin(\bar x))),
\]
and $F$ is regular at $\bar x$, i.e., $\widehat\partial F(\bar x)=\partial F(\bar x)$.
\end{proof}

% \newpage

\subsection{Global descent and complexity bound}
We show that the prox-convex model satisfies explicit two-sided quadratic approximation bounds, and therefore fits the Taylor-like model framework.
\begin{lemma}[Quadratic Model Error]\label{lem:model_two_sided}
For all $x$, the model error is bounded by:
\begin{align} \label{eq:two_sided}
-
\frac{L_L}{2}
\|x-x_k\|^2
\;\le\; F(x)-F(x;x_k) \;\le\; 
\frac{L_U}{2}
\|x-x_k\|^2.
\end{align}
where $L_L := L_U + L_{\sout}\beta_{\Cin}$ and $L_U := L_{\cout}\beta_{\sinn} + L_{\Cin}^2\beta_{\sout}$.
\end{lemma}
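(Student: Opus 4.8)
The plan is to cancel the common convex term $\cvx(x)$ — which appears identically in $F$ and in the model $F(\cdot;x_k)$ — and then split the remaining discrepancy into two independent pieces: the \emph{inner-composite} error from linearizing $\sinn$ inside $\cout$, and the \emph{outer-composite} error from linearizing $\sout$ together with the selective treatment of the channels $\cin_i$. Writing
\[
F(x)-F(x;x_k)=E_1+E_2,
\]
with $E_1=\cout(\sinn(x))-\cout\big(\sinn(x_k)+\nabla\sinn(x_k)(x-x_k)\big)$ and $E_2=\sout(\Cin(x))-\sout(\Cin(x_k))-\sum_i\nabla_i\sout(\Cin(x_k))\,\Phi_i(x;x_k)$, I would bound each term and recombine.

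For $E_1$, I would use that $\cout$ is $L_{\cout}$-Lipschitz to pass to $\|\sinn(x)-\sinn(x_k)-\nabla\sinn(x_k)(x-x_k)\|$, then invoke the standard Lipschitz-Jacobian estimate (integral form of the fundamental theorem of calculus) to get $\|\sinn(x)-\sinn(x_k)-\nabla\sinn(x_k)(x-x_k)\|\le\tfrac{\beta_{\sinn}}{2}\|x-x_k\|^2$; hence $|E_1|\le\tfrac{L_{\cout}\beta_{\sinn}}{2}\|x-x_k\|^2$, symmetric in sign. For $E_2$, I would first peel off the smooth outer expansion: setting $a:=\nabla\sout(\Cin(x_k))$, the descent lemma for $\sout$ (Lipschitz Jacobian $\beta_{\sout}$) combined with $\|\Cin(x)-\Cin(x_k)\|\le L_{\Cin}\|x-x_k\|$ gives $|\,\sout(\Cin(x))-\sout(\Cin(x_k))-a^\top(\Cin(x)-\Cin(x_k))\,|\le\tfrac{\beta_{\sout}L_{\Cin}^2}{2}\|x-x_k\|^2$, again symmetric. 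These two symmetric contributions already assemble the upper constant $L_U=L_{\cout}\beta_{\sinn}+L_{\Cin}^2\beta_{\sout}$.

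The crux is the remaining \emph{linearization gap} $a^\top(\Cin(x)-\Cin(x_k))-\sum_i a_i\Phi_i(x;x_k)$, which by the definition of $\Phi_i$ collapses to $\sum_{i\in\mathcal I_k^-}a_i\,D_i$ with $D_i:=\cin_i(x)-\cin_i(x_k)-\nabla\cin_i(x_k)^\top(x-x_k)$. Here convexity of each $\cin_i$ forces $D_i\ge0$, while $a_i<0$ on $\mathcal I_k^-$ by definition, so this whole term is \emph{nonpositive}: it never inflates the upper bound but does contribute to the lower bound, which is exactly the source of the asymmetry $L_L>L_U$. To size it, I would use that each linearized $\cin_i$ is $\mathcal C^1$ with $\beta_{\cin_i}$-Lipschitz gradient (Assumption~\ref{asm:TA}), giving $0\le D_i\le\tfrac{\beta_{\cin_i}}{2}\|x-x_k\|^2$, then Cauchy--Schwarz together with $\mathcal I_k^-\subseteq\mathcal I^-$, the spectral bound $\|a\|\le L_{\sout}$, and the definition $\beta_{\Cin}=(\sum_{i\in\mathcal I^-}\beta_{\cin_i}^2)^{1/2}$ to obtain $\sum_{i\in\mathcal I_k^-}|a_i|\beta_{\cin_i}\le\|a\|\,\beta_{\Cin}\le L_{\sout}\beta_{\Cin}$, hence a lower bound $-\tfrac{L_{\sout}\beta_{\Cin}}{2}\|x-x_k\|^2$ on this term. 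Recombining all pieces yields the upper constant $L_U$ and the lower constant $L_L=L_U+L_{\sout}\beta_{\Cin}$. The main obstacle is precisely this sign bookkeeping: one must recognize that convexity makes the channel gaps one-signed and align that with the sign of $a_i$ to land the asymmetric constants, rather than naively symmetrizing every term.
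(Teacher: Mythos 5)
Your proposal is correct and follows essentially the same route as the paper's proof: the same decomposition into the $\cout\circ\sinn$ error and the $\sout\circ\Cin$ error, the same symmetric bounds yielding $L_U$, and the same key observation that convexity of the $\cin_i$ together with the negative sign of $\nabla_i\sout$ on linearized channels makes the model design gap one-signed, producing the asymmetric lower constant $L_L$. The only cosmetic difference is that you apply Cauchy--Schwarz to the weighted sum $\sum_{i\in\mathcal I_k^-}|a_i|\beta_{\cin_i}$ while the paper bounds the norm of the error vector $\|\Delta_{\Cin}-\Phi\|\le\tfrac{\beta_{\Cin}}{2}\|x-x_k\|^2$ first; both give the identical constant $L_{\sout}\beta_{\Cin}/2$.
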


\begin{proof}
The proof strategy is to decompose the error $F(x) - F(x; x_k)$ into its two main composite parts and then establish both an upper and a lower bound for each component. Let $d_x := x-x_k$. By construction, the terms involving $\cvx(x)$ cancel, leaving:
\begin{align*}
F(x)-F(x;x_k)
&=\underbrace{\left[\cout(\sinn(x))-\cout\big(\sinn(x_k)+\nabla\sinn(x_k)d_x\big)\right]}_{\text{Term (I): Error from } \cout \circ \sinn} 
+
\underbrace{\left[\sout(\Cin(x))-\sout(\Cin(x_k))-\nabla\sout(\Cin(x_k))^\top\Phi(x;x_k)\right]}_{\text{Term (II): Error from } \sout \circ \Cin}.
\end{align*}

\paragraph{Bounding Term (I)}
This term represents the error from linearizing the inner map $\sinn$. Let $\Delta_{\sinn} := \sinn(x)-\sinn(x_k)-\nabla\sinn(x_k)d_x$. Since $\nabla\sinn$ is $\beta_{\sinn}$-Lipschitz, we have $\|\Delta_{\sinn}\|\le \frac{\beta_{\sinn}}{2}\|d_x\|^2$. The $L_{\cout}$-Lipschitz continuity of $\cout$ then provides a symmetric quadratic bound:
\[
-\frac{L_{\cout}\beta_{\sinn}}{2}\|d_x\|^2 \;\le\; \mathrm{(I)} \;\le\; \frac{L_{\cout}\beta_{\sinn}}{2}\|d_x\|^2.
\]

\paragraph{Bounding Term (II)}
This term represents the error from the prox-convex approximation of the $\sout \circ \Cin$ composition. Let $\Delta_{\Cin} := \Cin(x)-\Cin(x_k)$. We can rewrite Term (II) to separate the error from the smoothness of $\sout$ and the error from the model's specific design:
\[
\mathrm{(II)} = \underbrace{\left[\sout(\Cin(x))-\sout(\Cin(x_k))-\nabla\sout(\Cin(x_k))^\top\Delta_{\Cin}\right]}_{\text{Smoothness Error}} + \underbrace{\nabla\sout(\Cin(x_k))^\top(\Delta_{\Cin}-\Phi(x;x_k))}_{\text{Model Design Error}}.
\]
The standard inequality for the $\beta_{\sout}$-smooth function $\sout$ bounds the \emph{Smoothness Error}. 
Furthermore, by construction of our model, the \emph{Model Design Error} is always non-positive. 
For each component $i$:
\[
\cin_i(x) - \cin_i(x_k) - \Phi_i(x; x_k) = 
\begin{cases}
0 & \text{if } \nabla_i \sout\big(\Cin(x_k)\big) \geq 0 \\
\cin_i(x)-\cin_i(x_k)-\nabla\cin_i(x_k)^\top d_x & \text{otherwise}.
\end{cases}
\]
By convexity of $\cin_i$, $\cin_i(x)-\cin_i(x_k)-\nabla\cin_i(x_k)^\top d_x \geq 0$. This implies that:
\[
\nabla\sout(\Cin(x_k))^\top(\Delta_{\Cin} - \Phi(x; x_k)) = \sum_{i=1}^n \nabla_i \sout\big(\Cin(x_k)\big) \big(\cin_i(x) - \cin_i(x_k) - \Phi_i(x; x_k)\big) \leq 0.
\]
This immediately gives the upper bound:
\[
\mathrm{(II)} \le \frac{\beta_{\sout}}{2}\|\Delta_{\Cin}\|^2 + 0 \le \frac{\beta_{\sout}L_{\Cin}^2}{2}\|d_x\|^2.
\]
For the lower bound, we use the other side of the smoothness inequality and the Cauchy-Schwarz inequality:
\[
\mathrm{(II)} \ge -\frac{\beta_{\sout}}{2}\|\Delta_{\Cin}\|^2 - \|\nabla\sout(\Cin(x_k))\| \cdot \|\Delta_{\Cin}-\Phi(x;x_k)\|.
\]
The error vector $\Delta_{\Cin}-\Phi$ has non-zero components only if $\nabla_i \sout\big(\Cin(x_k)\big) < 0$. For these components, our additional assumption provides a bound on the linearization error: $|\cin_i(x)-\cin_i(x_k)-\nabla\cin_i(x_k)^\top d_x| \le \frac{\beta_{\cin_i}}{2}\|d_x\|^2$. The norm of the full error vector is therefore bounded:
\[
\|\Delta_{\Cin}-\Phi\| \le \frac{\beta_{\Cin}}{2}\|d_x\|^2.
\]
Substituting this into the lower bound for Term (II) and using $\|\nabla\sout\| \le L_{\sout}$ and $\|\Delta_{\Cin}\| \le L_{\Cin}\|d_x\|$, we get:
\[
\mathrm{(II)} \ge -\frac{\beta_{\sout}L_{\Cin}^2}{2}\|d_x\|^2 - \frac{L_{\sout}\beta_{\Cin}}{2}\|d_x\|^2.
\]

Adding the bounds for (I) and (II) gives 
$-\frac{L_L}{2}\|d_x\|^2 \le F(x)-F(x;x_k) \le \frac{L_U}{2}\|d_x\|^2,$
where $L_L := L_U + L_{\sout}\beta_{\Cin}$ and $L_U := L_{\cout}\beta_{\sinn} + L_{\Cin}^2\beta_{\sout}$.
\end{proof}

\begin{remark}[Model constants]\label{rem:abstract-LU-LL-compact}
As will be shown, $L_U$ controls how large $Q_k$ must be to ensure acceptance (and hence directly governs the effective step-size), whereas $L_L$ is needed to translate model progress into stationarity guarantees and to establish the linear convergence rate.

\emph{Key points.}
(i) We linearize a channel $\cin_i$ only when the outer directional influence is negative (the contribution is locally concave); the affine term is a global majorant and preserves convexity.
(ii) Curvature of such linearized inner channels enters only through an additive term in $L_L$ (via the design-error bound) and does \emph{not} affect $L_U$; hence step-size selection depends solely on $L_U$.
(iii) If no channel is linearized (i.e., $\mathcal I^-=\emptyset$), then the bound is symmetric and $L_L=L_U$.
\end{remark}

% \newpage

% ======================================================
% Lemmas with detailed proofs (matrix metric Q_k)
% ======================================================
We connect model error bounds to the predicted/actual reduction ratio test, proving that a minimal amount of proximal metric guarantees acceptance.

%===========================
% Acceptance
%===========================
\begin{lemma}[Sufficient condition for step acceptance]\label{lem:accept-Q}
Let $Q_k\succ0$ and write $\sigma_k:=\lambda_{\min}(Q_k)$. If
\[
\sigma_k\ \ge\ \frac{L_U}{\,2-\alpha_1\,},
\]
then the trial point $x_k^+$ is accepted, i.e., $\rho_k:=\Act_k/\Pred_k\ge \alpha_1$.
\end{lemma}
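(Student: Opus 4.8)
The plan is to reduce the acceptance test $\rho_k\ge\alpha_1$ to the scalar inequality $\Act_k-\alpha_1\Pred_k\ge 0$ and to prove the latter directly from a single clean identity relating actual and predicted reduction. First I would record the exactness of the model at the base point: since each linearization is taken at $x_k$ and $\Phi_i(x_k;x_k)=0$, we have $F(x_k;x_k)=F(x_k)$ and hence $F_{Q_k}(x_k;x_k)=F(x_k)$. Writing $d:=x_k^+-x_k$, the $\sigma_k$-strong convexity of $x\mapsto F_{Q_k}(x;x_k)$ together with optimality of $x_k^+$ yields $\Pred_k=F_{Q_k}(x_k;x_k)-F_{Q_k}(x_k^+;x_k)\ge\tfrac{\sigma_k}{2}\|d\|^2$. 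In particular $\Pred_k=0$ forces $d=0$, so on the non-terminating branch (the algorithm returns $x_k$ when $\Pred_k=0$) we may assume $\Pred_k>0$ and $d\neq 0$, making $\rho_k$ well defined.

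The key algebraic step is to subtract the two reductions. Since $\Act_k-\Pred_k=F_{Q_k}(x_k^+;x_k)-F(x_k^+)=\tfrac12\|d\|_{Q_k}^2-E$, where $E:=F(x_k^+)-F(x_k^+;x_k)$ is the model error at the trial point, I obtain the identity
\[
\Act_k-\alpha_1\Pred_k \;=\; (1-\alpha_1)\Pred_k+\tfrac12\|d\|_{Q_k}^2-E .
\]
This isolates exactly the three quantities I can control: the predicted reduction (bounded below by strong convexity), the metric step length $\|d\|_{Q_k}^2\ge\sigma_k\|d\|^2$, and the model error $E$.

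At this point I would invoke only the \emph{upper} half of the two-sided estimate in Lemma~\ref{lem:model_two_sided}, namely $E\le\tfrac{L_U}{2}\|d\|^2$, so that $-E\ge-\tfrac{L_U}{2}\|d\|^2$. Substituting this together with $\Pred_k\ge\tfrac{\sigma_k}{2}\|d\|^2$ and $\|d\|_{Q_k}^2\ge\sigma_k\|d\|^2$ collapses the identity to
\[
\Act_k-\alpha_1\Pred_k\;\ge\;\tfrac{\|d\|^2}{2}\big[(2-\alpha_1)\sigma_k-L_U\big].
\]
The hypothesis $\sigma_k\ge L_U/(2-\alpha_1)$ makes the bracket nonnegative, so $\Act_k\ge\alpha_1\Pred_k$, i.e.\ $\rho_k\ge\alpha_1$, which is acceptance.

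I do not expect a serious obstacle; the content is essentially careful bookkeeping. The one point deserving attention is recognizing that \emph{only} the upper constant $L_U$ enters: $E$ appears with a sign requiring merely its upper bound, so the lower constant $L_L$ plays no role in acceptance. This is precisely why the threshold is governed by $L_U$ alone, consistent with Remark~\ref{rem:abstract-LU-LL-compact}. The only subtlety I would flag explicitly is the degenerate case $\Pred_k=0$, which coincides with $d=0$ and triggers termination before $\rho_k$ is formed, so the statement is understood on the branch $\Pred_k>0$.
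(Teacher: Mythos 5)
Your proof is correct and follows essentially the same route as the paper's: a strong-convexity/optimality lower bound on $\Pred_k$, the upper half of Lemma~\ref{lem:model_two_sided} at $x=x_k^+$ to bound the model error by $\tfrac{L_U}{2}\|d\|^2$, and algebra on the acceptance test. Recasting the test in difference form $\Act_k-\alpha_1\Pred_k\ge 0$ rather than the paper's ratio chain, and flagging the degenerate case $\Pred_k=0$, are minor organizational refinements (the difference form in fact sidesteps the sign bookkeeping hidden in dividing by $\Pred_k$), not a different argument.
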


\begin{proof}
By strong convexity of $x\mapsto F_{Q_k}(x;x_k)$ and optimality of $x_k^+$,
\[
\Pred_k\ =\ F(x_k)-F_{Q_k}(x_k^+;x_k)\ \ge\ \tfrac12\|x_k^+-x_k\|_{Q_k}^2.
\]
By the upper side of Lemma~\ref{lem:model_two_sided} at $x=x_k^+$,
\[
\Act_k\ =\ F(x_k)-F(x_k^+)
\ \ge\ F(x_k)-F(x_k^+;x_k)-\tfrac{L_U}{2}\|x_k^+-x_k\|^2
\ =\ \Pred_k+\tfrac12\|x_k^+-x_k\|_{Q_k}^2-\tfrac{L_U}{2}\|x_k^+-x_k\|^2.
\]
Using $\|u\|_{Q_k}^2\ge \sigma_k\|u\|^2$,
\[
\frac{\Act_k}{\Pred_k}\ \ge\ 1+\frac{\sigma_k-L_U}{\sigma_k}
\ =\ 2-\frac{L_U}{\sigma_k}\ \ge\ \alpha_1
\]
whenever $\sigma_k\ge L_U/(2-\alpha_1)$.
\end{proof}
Thus, once the trial curvature exceeds the model smoothness threshold $L_U$, discounted by the acceptance ratio $(2-\alpha_1)$, the predicted/actual reduction ratio test always passes.

% \newpage

%===========================
% Finite rejections
%===========================
We bound the number of consecutive rejections via geometric growth of the scalar proximal weight, ensuring progress of the inner loop.
\begin{lemma}[Finite rejections]\label{lem:finite-reject-Q}
Assume the setting of Lemma~\ref{lem:accept-Q}. Let $Q_k=\mu_k I+H_k^+$ with
$H_k^+\succeq0$ fixed within the inner loop and update $\mu_k\leftarrow\nu_{\rm inc}\mu_k$ on each rejection, with $\nu_{\rm inc}>1$. Writing
$\psi:=L_U/(2-\alpha_1)$, after at most
\[
N_k\ \le\ \Big\lceil \log\!\big(\psi/\mu_k\big)\big/\log(\nu_{\rm inc})\Big\rceil_+
\]
rejections we have $\lambda_{\min}(Q_k)\ge\psi$, so the step is accepted. In particular, if $\mu_k\ge \psi$ then $N_k=0$. Here $\lceil\cdot\rceil_+:=\max\{0,\lceil\cdot\rceil\}$.
\end{lemma}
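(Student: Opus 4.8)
The plan is to track the lower eigenvalue of $Q_k$ as the inner loop multiplies $\mu_k$ by $\nu_{\rm inc}$ on each rejection, and to show this eigenvalue crosses the acceptance threshold $\psi=L_U/(2-\alpha_1)$ after a controlled number of steps. The key observation is that $Q_k=\mu_kI+H_k^+$ with $H_k^+\succeq 0$ fixed throughout the inner loop, so by Weyl's inequality (or directly, since adding a PSD matrix only raises eigenvalues) we have $\lambda_{\min}(Q_k)=\mu_k+\lambda_{\min}(H_k^+)\ge\mu_k$. Thus it suffices to force $\mu_k\ge\psi$; once that holds, $\lambda_{\min}(Q_k)\ge\psi$ and Lemma~\ref{lem:accept-Q} guarantees acceptance.

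First I would set up the geometric recursion: if the loop has rejected $j$ times starting from the entry value $\mu_k$, then the current weight is $\nu_{\rm inc}^{\,j}\mu_k$. I would then solve the inequality $\nu_{\rm inc}^{\,j}\mu_k\ge\psi$ for the smallest integer $j$. Taking logarithms (valid since $\nu_{\rm inc}>1$ so $\log\nu_{\rm inc}>0$, and $\mu_k,\psi>0$) gives $j\ge\log(\psi/\mu_k)/\log(\nu_{\rm inc})$, so the smallest such integer is $\lceil\log(\psi/\mu_k)/\log(\nu_{\rm inc})\rceil$. If the entry value already satisfies $\mu_k\ge\psi$ then $\log(\psi/\mu_k)\le 0$ and no rejections are needed, which is exactly why the clamp $\lceil\cdot\rceil_+=\max\{0,\lceil\cdot\rceil\}$ appears: it prevents the bound from going negative and recovers the stated special case $N_k=0$.

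To finish, I would combine these pieces: after $N_k:=\lceil\log(\psi/\mu_k)/\log(\nu_{\rm inc})\rceil_+$ rejections the weight satisfies $\mu_k\ge\psi$, hence $\lambda_{\min}(Q_k)\ge\mu_k\ge\psi$, so by Lemma~\ref{lem:accept-Q} the acceptance test $\rho_k\ge\alpha_1$ passes and no further rejection occurs. This shows the inner loop terminates after at most $N_k$ rejections.

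This proof is essentially a routine logarithmic-counting argument, so I do not anticipate a genuine obstacle; the only point requiring a small amount of care is the inequality $\lambda_{\min}(Q_k)\ge\mu_k$, which must be justified cleanly (PSD curvature can only increase eigenvalues, so the scalar damping term alone suffices to reach the threshold regardless of $H_k^+$). It is worth stating this explicitly, because it is precisely what makes the bound independent of the curvature block and guarantees that the fallback to pure first-order damping always works.
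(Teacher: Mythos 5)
Your proof is correct and essentially identical to the paper's argument: both track the geometric growth $\mu^{(j)}=\nu_{\rm inc}^{\,j}\mu_k$, use $H_k^+\succeq0$ to conclude $\lambda_{\min}(Q^{(j)})\ge\mu^{(j)}$, invoke Lemma~\ref{lem:accept-Q} once $\mu^{(j)}\ge\psi$, and obtain the bound by solving for the smallest integer $j$ and clipping at zero. Your explicit remark that $\lambda_{\min}(\mu_k I+H_k^+)=\mu_k+\lambda_{\min}(H_k^+)$ is a slightly sharper (and still correct) version of the inequality the paper uses.
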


\begin{proof}
Within the inner loop the curvature block $H_k^+$ is fixed and only the scalar
$\mu$ is updated. After $j$ consecutive rejections,
\[
Q^{(j)}=\mu^{(j)} I + H_k^+,\qquad \mu^{(j)}=\nu_{\rm inc}^{\,j}\,\mu_k.
\]
Since $H_k^+\succeq0$, $\lambda_{\min}\!\big(Q^{(j)}\big)\ge\mu^{(j)}$. By Lemma~\ref{lem:accept-Q}, acceptance is guaranteed once
$\lambda_{\min}(Q^{(j)})\ge \psi$, which is implied by $\mu^{(j)}\ge \psi$. Using $\mu^{(j)}=\nu_{\rm inc}^{\,j}\mu_k$ gives
\[
j\ \ge\ \frac{\log(\psi/\mu_k)}{\log(\nu_{\rm inc})}.
\]
Taking the smallest integer $j$ satisfying the inequality and clipping at $0$ yields the claim. If $\mu_k\ge\psi$ then $N_k=0$ and the first trial is accepted. 
\end{proof}

% \newpage

We show that the adaptive proximal metrics remain uniformly well-conditioned, which is crucial for descent and complexity.
%===========================
% Uniform conditioning
%===========================
\begin{lemma}[Uniform spectral bounds for $Q_k$]\label{lem:Qk-spectral-bounds}
Run Algorithm~\ref{alg:adaptive-pcx} with $0<\alpha_1<\alpha_2<1$, $\nu_{\rm inc}>1$, $\nu_{\rm dec}\in(0,1)$, and $\mu_{\min}>0$.
Let $Q_k=\mu_k I+H_k^+$ where $H_k^+$ is constructed as in Sec.~\ref{subsec:hessian_construction}. Then there exist constants $0<\underline q\le \overline q<\infty$ (independent of $k$) such that
\begin{equation}\label{eq:spectral-bounds}
\underline q\,I\ \preceq\ Q_k\ \preceq\ \overline q\,I\qquad(\forall k),
\end{equation}
with
\[
\underline q:=\min\{\mu_0,\mu_{\min}\},\qquad
\overline q:=\max\!\Big\{\mu_0,\ \frac{\nu_{\rm inc}L_U}{\,2-\alpha_1\,}\Big\}+\overline h,
\]
and the curvature cap
\[
\boxed{\ \overline h\ :=\ \beta_{\sinn}L_{\cout}\ +\ \beta_{\sout}L_{\Cin}^2\ +\ L_{\sout}\,\beta_{\Cin} \ }.
\]
\end{lemma}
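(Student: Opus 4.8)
The statement asks for two-sided spectral bounds on $Q_k=\mu_kI+H_k^+$ that hold uniformly in $k$. The plan is to bound the scalar weight $\mu_k$ and the curvature block $H_k^+$ separately, then combine. Since $H_k^+\succeq0$, the lower bound reduces to bounding $\mu_k$ from below, and the upper bound combines an upper bound on $\mu_k$ with a uniform cap $\overline h$ on $\|H_k^+\|$.

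\medskip
\noindent\emph{Step 1: Lower bound on $\mu_k$.} First I would argue by induction that $\mu_k\ge\underline q:=\min\{\mu_0,\mu_{\min}\}$ for every $k$. The scalar weight is only changed in three places in Algorithm~\ref{alg:adaptive-pcx}: it is multiplied by $\nu_{\rm inc}>1$ on rejection (which only increases it), held fixed on moderate agreement, or reset via $\mu_{k+1}\gets\max\{\mu_{\min},\nu_{\rm dec}\mu_k\}$ on excellent agreement. The $\max$ with $\mu_{\min}$ guarantees the decrease branch never drops $\mu$ below $\mu_{\min}$, and no other branch decreases $\mu$; hence $\mu_k\ge\min\{\mu_0,\mu_{\min}\}$ throughout. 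Since $H_k^+\succeq0$, this gives $Q_k\succeq\mu_kI\succeq\underline q\,I$.

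\medskip
\noindent\emph{Step 2: Upper bound on $\mu_k$.} Next I would show $\mu_k\le\max\{\mu_0,\ \nu_{\rm inc}L_U/(2-\alpha_1)\}$ for all $k$, again by induction on accepted iterations. The key mechanism is Lemma~\ref{lem:accept-Q}: once $\lambda_{\min}(Q_k)\ge\psi:=L_U/(2-\alpha_1)$ the step is accepted, and since $H_k^+\succeq0$ this holds as soon as $\mu_k\ge\psi$. Therefore no rejection (and thus no $\nu_{\rm inc}$-inflation of $\mu$) can occur once $\mu_k\ge\psi$; the weight entering the final accepted trial at iteration $k$ can exceed $\psi$ only by a single inflation factor, giving the bound $\nu_{\rm inc}\psi$. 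The decrease and fixed branches never raise $\mu$, so the running maximum of $\mu_k$ is controlled by $\max\{\mu_0,\nu_{\rm inc}\psi\}$. A clean way to carry this out is to prove the invariant ``$\mu_k\le\max\{\mu_0,\nu_{\rm inc}\psi\}$ at every accepted iterate'' and to verify it survives each of the three update branches.

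\medskip
\noindent\emph{Step 3: Uniform bound on $H_k^+$ and assembly.} It remains to show $\|H_k^+\|\le\overline h$. Since $H_k^+=\Pi_{\mathbb S_+}(H_{\sinn,k}+H_{\sout,k})$ and projection onto the PSD cone is nonexpansive (and maps to a matrix of no larger spectral norm on the relevant side), I would bound $\|H_{\sinn,k}+H_{\sout,k}\|$ by the triangle inequality using the explicit formulas in Section~\ref{subsec:hessian_construction}. For the inner term, $\|H_{\sinn,k}\|\le\|y\|_1\max_j\|\nabla^2\sinn_j\|$ is controlled through the $L_{\cout}$-Lipschitz bound on subgradients $y\in\partial\cout$ and the $\beta_{\sinn}$-Lipschitz Jacobian of $\sinn$, yielding the $\beta_{\sinn}L_{\cout}$ contribution; for the outer pullback $G_{\Cin}^\top\nabla^2\sout\,G_{\Cin}$, the bound $\|G_{\Cin}\|^2\le L_{\Cin}^2$ together with $\|\nabla^2\sout\|\le\beta_{\sout}$ gives $\beta_{\sout}L_{\Cin}^2$, and the inner-compensation term on linearized channels is controlled by $\|\nabla\sout\|\le L_{\sout}$ and $\beta_{\Cin}$, giving $L_{\sout}\beta_{\Cin}$. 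Summing the three yields exactly $\overline h$. Finally, combining Steps~1--3 via $\mu_k I\preceq Q_k\preceq(\mu_k+\|H_k^+\|)I$ delivers \eqref{eq:spectral-bounds} with the stated $\underline q$ and $\overline q$.

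\medskip
\noindent\emph{Main obstacle.} The routine estimates in Step~3 require that the curvature quantities entering $H_k^+$ admit uniform bounds in terms of the Assumption~\ref{asm:TA} constants; the cleanest identification of $\overline h$ implicitly relies on interpreting $\beta_{\sinn},\beta_{\sout}$ as Hessian norm bounds on the level set (consistent with the Lipschitz-Jacobian assumptions) rather than merely as Jacobian-Lipschitz moduli. The genuinely delicate point, however, is Step~2: one must argue carefully that inflation by $\nu_{\rm inc}$ cannot accumulate across iterations—i.e. that the acceptance guarantee of Lemma~\ref{lem:accept-Q} caps $\mu$ at $\nu_{\rm inc}\psi$ at \emph{every} iteration and that the ``fixed'' and ``decrease'' branches cannot later push $\mu$ above this cap. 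Making this invariant precise across the outer-loop update rules is where the argument must be stated with care.
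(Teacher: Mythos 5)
Your proposal is correct and follows essentially the same three-step route as the paper's proof: induction on the update branches for $\mu_k\ge\min\{\mu_0,\mu_{\min}\}$, the acceptance threshold $\psi=L_U/(2-\alpha_1)$ with a single $\nu_{\rm inc}$ overshoot for the upper bound on $\mu_k$, and nonexpansiveness of $\Pi_{\mathbb S_+}$ plus term-by-term bounds for $\|H_k^+\|\le\overline h$. One small fix: in Step 3 replace the intermediate bound $\|H_{\sinn,k}\|\le\|y\|_1\max_j\|\nabla^2\sinn_j\|$ by the $\ell_2$ pairing $\big\|\sum_j y_j\nabla^2\sinn_j\big\|\le\beta_{\sinn}\|y\|_2\le\beta_{\sinn}L_{\cout}$ (as the paper does), since $\|y\|_1$ is only bounded by $\sqrt{d}\,L_{\cout}$ and your version would introduce a spurious dimension factor.
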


\begin{proof}
\emph{Step 1 (Spectral bound for $H_k^+$).}
Because the projection $\Pi_{\mathbb{S}_+}$ is nonexpansive,
$\|H_k^+\|\le \|H_{\sinn,k}+H_{\sout,k}\|$. Assumption~\ref{asm:TA} and standard Lipschitz–Hessian
bounds yield
\[
\Big\|\sum_{j} y_j\,\nabla^2\sinn_j(x_k)\Big\|
\ \le\ \beta_{\sinn}\,\|y\|_2
\ \le\ \beta_{\sinn}\,L_{\cout},
\qquad
\big\|G_{\Cin}(x_k)^\top\nabla^2\sout(\Cin(x_k))\,G_{\Cin}(x_k)\big\|
\ \le\ \beta_{\sout}\,L_{\Cin}^2.
\]
If each linearized channel $i\in\mathcal I_k^- \subseteq \mathcal I^-$ satisfies
$\cin_i\in\mathcal C^1$ with $\|\nabla^2\cin_i(x)\|\le \beta_{\cin_i}$, then using $\|\nabla\sout(\Cin(x_k))\|\le L_{\sout}$, we have
\[
\Bigg\|\sum_{i\in\mathcal I_k^-}\big[\nabla\sout(\Cin(x_k))\big]_i
\,\nabla^2\cin_i(x_k)\Bigg\|
\ \le\ \|\nabla\sout(\Cin(x_k))\|\,\beta_{\Cin}
\ \le\ L_{\sout}\,\beta_{\Cin}.
\]
Therefore,
\[
0\ \le\ \lambda_{\min}(H_k^+)\ \le\ \lambda_{\max}(H_k^+)\ \le\ \overline h
:= \beta_{\sinn}L_{\cout}+\beta_{\sout}L_{\Cin}^2+L_{\sout}\beta_{\Cin}.
\]

\emph{Step 2 (Lower spectral bound for $Q_k$).}
By the update rules in Algorithm~\ref{alg:adaptive-pcx},
\[
\mu_{k+1}=
\begin{cases}
\nu_{\rm inc}\mu_k & \text{if rejected},\\
\max\{\mu_{\min},\,\nu_{\rm dec}\mu_k\} & \text{if accepted with excellent agreement},\\
\mu_k & \text{if accepted with moderate agreement},
\end{cases}
\]
so by induction $\mu_k\!\ge\! \min\{\mu_0,\mu_{\min}\}$ for all $k$. Since $H_k^+\!\succeq\! 0$,
we have $\lambda_{\min}(Q_k)\!\ge\! \mu_k\!\ge\! \underline q$, hence $\underline q\,I\preceq Q_k$.

\emph{Step 3 (Upper spectral bound for $Q_k$).}
Let $\psi:=L_U/(2-\alpha_1)$. By Lemma~\ref{lem:finite-reject-Q}, within any inner loop the step is accepted once $\mu\ge \psi$, and during rejections we only multiply $\mu$ by $\nu_{\rm inc}$. Therefore the scalar at acceptance obeys
\(
\mu^{\rm acc}\le \nu_{\rm inc}\psi=\nu_{\rm inc}L_U/(2-\alpha_1)
\),
and during that inner loop $\mu$ never exceeds this cap.
Across outer iterations, acceptance either keeps or shrinks $\mu$, hence
\[
\mu_k\ \le\ \max\!\Big\{\mu_0,\ \frac{\nu_{\rm inc}L_U}{2-\alpha_1}\Big\}\qquad(\forall k).
\]
Finally,
\(
\lambda_{\max}(Q_k)\le \mu_k+\lambda_{\max}(H_k^+)
\le \max\{\mu_0,\nu_{\rm inc}L_U/(2-\alpha_1)\}+\overline h
=: \overline q
\).
Thus $Q_k\preceq \overline q\,I$, completing \eqref{eq:spectral-bounds}.
\end{proof}

% \newpage

With acceptance and conditioning in place, we derive a per-step decrease and a global $O(\varepsilon^{-2})$ bound in accepted steps. Throughout, let $\mathcal S\subset\mathbb N$ denote the set of \emph{accepted} indices produced by Algorithm~\ref{alg:adaptive-pcx}, and write $\Delta_F:=F(x_0)-\inf F<\infty$. 
%===========================
% Global descent & O(eps^-2) complexity
%===========================
\begin{theorem}[Global descent and $O(\varepsilon^{-2})$ complexity on accepted steps]\label{thm:suff-dec-Q}
Let $F$ be bounded below and let $\{x_k\}_{k\in\mathcal S}$ be the accepted iterates of Algorithm~\ref{alg:adaptive-pcx} with ratio threshold $\alpha_1\in(0,1)$ and metrics $Q_k$ satisfying Lemma~\ref{lem:Qk-spectral-bounds}. Then for every $k\in\mathcal S$,
\begin{equation}\label{eq:adaptive-sd-final}
F(x_k)-F(x_{k+1})
\ \ge\ \frac{\alpha_1}{2}\,\|x_{k+1}-x_k\|_{Q_k}^2
\ \ge\ \frac{\alpha_1\,\underline q}{2}\,\|x_{k+1}-x_k\|^2.
\end{equation}
Consequently $\sum_{k\in\mathcal S}\|x_{k+1}-x_k\|_{Q_k}^2<\infty$, hence $\|x_{k+1}-x_k\|_{Q_k}\to0$. By Lemma~\ref{lem:Qk-spectral-bounds}, $\|\mathcal{G}_{Q_k}(x_k)\|=\|Q_k(x_k-x_{k+1})\|\le \overline q\,\|x_{k+1}-x_k\|\le (\overline q/\sqrt{\underline q})\,\|x_{k+1}-x_k\|_{Q_k}\to0$ along $\mathcal S$. Moreover, with
$\Delta_F:=F(x_0)-\inf F$ and for the first $N$ accepted indices $\mathcal S_N$,
\begin{equation}\label{eq:residual-avg-min}
\frac{1}{N}\sum_{k\in\mathcal S_N}\|\mathcal{G}_{Q_k}(x_k)\|^2
\ \le\ \frac{2\,\overline q\,\Delta_F}{\alpha_1\,N},
\qquad
\min_{k\in\mathcal S_N}\|\mathcal{G}_{Q_k}(x_k)\|
\ \le\ \sqrt{\frac{2\,\overline q\,\Delta_F}{\alpha_1\,N}}.
\end{equation}
Hence $N=O(\varepsilon^{-2})$ accepted steps suffice to reach $\min_{k\in\mathcal S_N}\|\mathcal{G}_{Q_k}(x_k)\|\le\varepsilon$.
\end{theorem}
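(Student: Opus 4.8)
The plan is to chain three ingredients: the acceptance test $\rho_k\ge\alpha_1$, the strong convexity of the proximal model, and the uniform spectral bounds on $Q_k$ from Lemma~\ref{lem:Qk-spectral-bounds}.

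First I would establish the per-step decrease \eqref{eq:adaptive-sd-final}. For any accepted index $k\in\mathcal S$ we have $F(x_k)-F(x_{k+1})=\Act_k\ge\alpha_1\Pred_k$. To lower-bound $\Pred_k$, I would reuse the argument already appearing in the proof of Lemma~\ref{lem:accept-Q}: the model is exact at the base point ($F(x_k;x_k)=F(x_k)$, since the linearizations and the $\Phi_i$ all vanish at $x=x_k$), and because $F(\cdot;x_k)$ is convex with $x_{k+1}=x_k^+$ its strongly convex proximal minimizer, the optimality condition $Q_k(x_k-x_{k+1})\in\partial F(x_{k+1};x_k)$ combined with the convexity inequality gives $\Pred_k=F(x_k)-F_{Q_k}(x_{k+1};x_k)\ge\tfrac12\|x_{k+1}-x_k\|_{Q_k}^2$. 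Multiplying by $\alpha_1$ yields the first inequality of \eqref{eq:adaptive-sd-final}, and the spectral lower bound $Q_k\succeq\underline q\,I$ gives the second.

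Next I would telescope. Since every accepted step decreases $F$ and $F$ is bounded below, summing \eqref{eq:adaptive-sd-final} over $k\in\mathcal S$ gives $\tfrac{\alpha_1}{2}\sum_{k\in\mathcal S}\|x_{k+1}-x_k\|_{Q_k}^2\le F(x_0)-\inf F=\Delta_F<\infty$, so the summands vanish and $\|x_{k+1}-x_k\|_{Q_k}\to 0$. The stated chain for the metric prox-gradient then follows from the spectral bounds: $\|\mathcal{G}_{Q_k}(x_k)\|=\|Q_k(x_k-x_{k+1})\|\le\overline q\,\|x_{k+1}-x_k\|\le(\overline q/\sqrt{\underline q})\,\|x_{k+1}-x_k\|_{Q_k}$, using $\|Q_k\|\le\overline q$ and $\|u\|\le\underline q^{-1/2}\|u\|_{Q_k}$.

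For the complexity bounds \eqref{eq:residual-avg-min}, the key observation is the operator inequality $Q_k^2\preceq\overline q\,Q_k$, which holds because $Q_k\preceq\overline q\,I$ implies $Q_k^{1/2}Q_kQ_k^{1/2}\preceq\overline q\,Q_k$. This gives $\|\mathcal{G}_{Q_k}(x_k)\|^2=(x_k-x_{k+1})^\top Q_k^2(x_k-x_{k+1})\le\overline q\,\|x_{k+1}-x_k\|_{Q_k}^2$, so \eqref{eq:adaptive-sd-final} becomes $F(x_k)-F(x_{k+1})\ge\tfrac{\alpha_1}{2\overline q}\|\mathcal{G}_{Q_k}(x_k)\|^2$. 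Summing over the first $N$ accepted indices $\mathcal S_N$ and telescoping yields $\sum_{k\in\mathcal S_N}\|\mathcal{G}_{Q_k}(x_k)\|^2\le 2\overline q\,\Delta_F/\alpha_1$; dividing by $N$ gives the average bound, the min-is-at-most-the-average step gives the $\min$ bound, and solving $\sqrt{2\overline q\,\Delta_F/(\alpha_1 N)}\le\varepsilon$ for $N$ delivers the $O(\varepsilon^{-2})$ count. The main point requiring care — rather than a genuine obstacle — is this conversion between the step norm $\|x_{k+1}-x_k\|_{Q_k}$ that the decrease naturally controls and the prox-gradient norm $\|\mathcal{G}_{Q_k}(x_k)\|$ that the complexity statement targets; the operator inequality $Q_k^2\preceq\overline q\,Q_k$, and hence the uniform upper bound $\overline q$ from Lemma~\ref{lem:Qk-spectral-bounds}, is exactly what bridges them, and without uniform conditioning the argument would not close.
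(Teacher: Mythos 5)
Your proposal is correct and follows essentially the same route as the paper: acceptance ratio plus strong convexity of the proximal model to get $\Pred_k\ge\tfrac12\|x_{k+1}-x_k\|_{Q_k}^2$, telescoping against $\Delta_F$, and the spectral bounds of Lemma~\ref{lem:Qk-spectral-bounds} to pass from step norms to the prox-gradient, finishing with $\min\le$ average. Your operator inequality $Q_k^2\preceq\overline q\,Q_k$ is just a rephrasing of the paper's step $\|\mathcal{G}_{Q_k}(x_k)\|_{Q_k^{-1}}^2\ge\overline q^{-1}\|\mathcal{G}_{Q_k}(x_k)\|^2$ (both encode $Q_k\preceq\overline q\,I$), so the two arguments are identical in substance.
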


\begin{proof}
For accepted $k$, $\Act_k\ge \alpha_1\,\Pred_k$. 
Strong convexity of $x\mapsto F_{Q_k}(x;x_k)$ and optimality of $x_{k+1}$ gives
$\Pred_k\ge \tfrac12\|x_{k+1}-x_k\|_{Q_k}^2$, yielding the first inequality in \eqref{eq:adaptive-sd-final}; the second follows from $Q_k\succeq \underline q I$. Summing over accepted indices and using $\inf F>-\infty$ gives $\sum_{k\in\mathcal S}\|x_{k+1}-x_k\|_{Q_k}^2<\infty$. For \eqref{eq:residual-avg-min}, note $Q_k^{-1}\succeq \overline q^{-1}I$, so
\[
F(x_k)-F(x_{k+1})\ \ge\ \frac{\alpha_1}{2}\,\|\mathcal{G}_{Q_k}(x_k)\|_{Q_k^{-1}}^2
\ \ge\ \frac{\alpha_1}{2\overline q}\,\|\mathcal{G}_{Q_k}(x_k)\|^2.
\]
Sum over $\mathcal S_N$, divide by $N$, and use $\min\le$ average.
\end{proof}

\begin{remark}[Effect of rejections]
If at most $j_{\max}$ consecutive rejections occur between accepted steps (cf.\ Lemma~\ref{lem:finite-reject-Q}), then at most $(1+j_{\max})N$ subproblem solves are needed to obtain $N$ \emph{accepted} steps. Thus, the overall evaluation complexity remains $O(\varepsilon^{-2})$ up to a constant factor.
\end{remark}

\begin{remark}[Unconditional decrease without the ratio test]
Combining the same inequalities used in the proof of Lemma~\ref{lem:accept-Q} yields
\[
F(x_k)-F(x_{k+1})
\ \ge\ \tfrac12\!\left(2-\frac{L_U}{\sigma_k}\right)\!\|x_{k+1}-x_k\|_{Q_k}^2,
\qquad \sigma_k:=\lambda_{\min}(Q_k).
\]
Thus, whenever $\sigma_k>\tfrac12L_U$, we have descent even \emph{without} invoking the ratio test. 
\end{remark}

% \newpage

\subsection{Linear rate of convergence}
Conceptually, this subsection mirrors the prox-linear analysis with a fixed \emph{scalar} proximal weight in \cite{drusvyatskiy2018error}, but in a more general setting: our prox-convex scheme uses an \emph{iteration-dependent matrix metric} $Q_k$ and relies on the \emph{asymmetric} two-sided model bounds of Lemma~\ref{lem:model_two_sided}, with distinct constants $L_U$ and $L_L$ rather than a single symmetric constant. We first derive matrix-valued gradient inequalities for the metric prox-gradient $\mathcal{G}_{Q_k}$ (a matrix analogue of \cite[Lem.~5.1]{drusvyatskiy2018error}). Combined with a \emph{metric} step-size error bound in the sense of \cite[Def.~5.4]{drusvyatskiy2018error}, these estimates yield local $Q$-linear convergence of function values following the template of \cite[Thm.~5.5]{drusvyatskiy2018error}. In the fixed-metric scalar case $Q_k=t^{-1}I$ and $L_L=L_U$, the resulting rate constant reduces to that of the prox-linear algorithm.

\begin{lemma}[Gradient inequality for prox-convex]
\label{lem:grad-ineq-pcx}
Let $x_{k+1}=\arg\min_x\{F(x;x_k)+\tfrac12\|x-x_k\|_{Q_k}^2\}$ and define the metric prox-gradient mapping
\[
\mathcal{G}_{Q_k}(x_k)\ :=\ Q_k(x_k-x_{k+1})\qquad(\text{so }Q_k^{-1}\mathcal{G}_{Q_k}(x_k)=x_k-x_{k+1}).
\]
Then for all $y\in\mathbb{R}^m$ the following hold.

\medskip\noindent
\begin{equation}\label{eq:gi-model}
F(y;x_k)\ \ge\ F_{Q_k}(x_{k+1};x_k)\ +\ \langle \mathcal{G}_{Q_k}(x_k),\,y-x_k\rangle\ +\ \tfrac12\langle Q_k^{-1}\mathcal{G}_{Q_k}(x_k),\,\mathcal{G}_{Q_k}(x_k)\rangle .
\end{equation}

\noindent
If Lemma~\ref{lem:model_two_sided} holds with constants $L_U,L_L$ and 
$\sigma_k:=\lambda_{\min}(Q_k)$, then
\begin{equation}\label{eq:gi-function}
F(y)\ \ge\ F(x_{k+1})\ +\ \langle \mathcal{G}_{Q_k}(x_k),\,y-x_k\rangle\ +\Big(1-\tfrac{L_U}{2\sigma_k}\Big)\,\langle Q_k^{-1}\mathcal{G}_{Q_k}(x_k),\,\mathcal{G}_{Q_k}(x_k)\rangle\ -\ \tfrac{L_L}{2}\,\|y-x_k\|^2 .
\end{equation}
In particular, setting $y=x_k$ gives the sufficient decrease estimate
\begin{equation}\label{eq:gi-decrease}
F(x_k)\ -\ F(x_{k+1})\ \ge\ \Big(1-\tfrac{L_U}{2\sigma_k}\Big)\,\langle Q_k^{-1}\mathcal{G}_{Q_k}(x_k),\,\mathcal{G}_{Q_k}(x_k)\rangle
\ =\ \tfrac12\Big(2-\tfrac{L_U}{\sigma_k}\Big)\,\|x_{k+1}-x_k\|_{Q_k}^2 .
\end{equation}
When $Q_k=t^{-1}I$ and $L_L=L_U=L$ (the symmetric-error bounds case of \cite{drusvyatskiy2018error}), \eqref{eq:gi-function} reduces to
\[
F(y)\ \ge\ F(x_{k+1})+\langle \mathcal{G}_{t^{-1}}(x_k),y-x_k\rangle+\tfrac{t}{2}(2-L t)\,\|\mathcal{G}_{t^{-1}}(x_k)\|^2-\tfrac{L}{2}\|y-x_k\|^2,
\]
which matches \cite[Lem.~5.1]{drusvyatskiy2018error} in the scalar-stepsize case.
\end{lemma}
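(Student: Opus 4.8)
The plan is to treat \eqref{eq:gi-model} as a purely convex-analytic consequence of the prox step, and then to layer the two-sided model error of Lemma~\ref{lem:model_two_sided} on top in order to pass from the model $F(\cdot;x_k)$ to the true objective $F$. For \eqref{eq:gi-model}, I would exploit that the proximal model $F_{Q_k}(\cdot;x_k)=F(\cdot;x_k)+\tfrac12\|\cdot-x_k\|_{Q_k}^2$ splits into a convex function plus an \emph{exact} quadratic with Hessian $Q_k$. Adding the subgradient inequality for $F(\cdot;x_k)$ to the exact second-order identity for the quadratic, evaluated at the minimizer $x_{k+1}$ where $0\in\partial F_{Q_k}(x_{k+1};x_k)$, gives $F_{Q_k}(y;x_k)\ge F_{Q_k}(x_{k+1};x_k)+\tfrac12\|y-x_{k+1}\|_{Q_k}^2$ for all $y$. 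Subtracting $\tfrac12\|y-x_k\|_{Q_k}^2$ from both sides and expanding the difference $\|y-x_{k+1}\|_{Q_k}^2-\|y-x_k\|_{Q_k}^2$ about $x_k$ produces a cross term $\langle Q_k(x_k-x_{k+1}),\,y-x_k\rangle=\langle\mathcal{G}_{Q_k}(x_k),y-x_k\rangle$ and a constant $\tfrac12\|x_k-x_{k+1}\|_{Q_k}^2=\tfrac12\langle Q_k^{-1}\mathcal{G}_{Q_k}(x_k),\mathcal{G}_{Q_k}(x_k)\rangle$, which is exactly \eqref{eq:gi-model}.

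To obtain \eqref{eq:gi-function}, I would invoke Lemma~\ref{lem:model_two_sided} twice. On the left I would use the lower model bound $F(y;x_k)\le F(y)+\tfrac{L_L}{2}\|y-x_k\|^2$ to replace the model value by $F(y)$, at the cost of the $-\tfrac{L_L}{2}\|y-x_k\|^2$ term. At the base point $x_{k+1}$ I would use the upper model bound $F(x_{k+1};x_k)\ge F(x_{k+1})-\tfrac{L_U}{2}\|x_{k+1}-x_k\|^2$ inside $F_{Q_k}(x_{k+1};x_k)=F(x_{k+1};x_k)+\tfrac12\|x_{k+1}-x_k\|_{Q_k}^2$ to replace the model value by $F(x_{k+1})$.

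The main obstacle, and the genuinely new ingredient relative to the scalar-stepsize argument of \cite{drusvyatskiy2018error}, is reconciling the two distinct quadratic penalties that now appear: the \emph{Euclidean} term $\tfrac{L_U}{2}\|x_{k+1}-x_k\|^2$ coming from the model error, and the \emph{$Q_k^{-1}$-weighted} term $\langle Q_k^{-1}\mathcal{G}_{Q_k}(x_k),\mathcal{G}_{Q_k}(x_k)\rangle$ coming from \eqref{eq:gi-model}. Using $x_{k+1}-x_k=-Q_k^{-1}\mathcal{G}_{Q_k}(x_k)$, the Euclidean term equals $\mathcal{G}_{Q_k}(x_k)^\top Q_k^{-2}\mathcal{G}_{Q_k}(x_k)$; since $Q_k\succeq\sigma_k I$ forces $Q_k^{-2}\preceq\sigma_k^{-1}Q_k^{-1}$ (the matrices share an eigenbasis, so $\lambda^{-2}\le\sigma_k^{-1}\lambda^{-1}$ for every eigenvalue $\lambda\ge\sigma_k$), it is bounded by $\sigma_k^{-1}\langle Q_k^{-1}\mathcal{G}_{Q_k}(x_k),\mathcal{G}_{Q_k}(x_k)\rangle$. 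This is precisely what converts the $\tfrac{L_U}{2}$ penalty into the coefficient $\tfrac{L_U}{2\sigma_k}$, and combining it with the $\tfrac12$ already present in \eqref{eq:gi-model} yields the net coefficient $1-\tfrac{L_U}{2\sigma_k}$, establishing \eqref{eq:gi-function}.

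Finally, \eqref{eq:gi-decrease} follows by setting $y=x_k$, so that the linear term $\langle\mathcal{G}_{Q_k}(x_k),y-x_k\rangle$ and $\tfrac{L_L}{2}\|y-x_k\|^2$ both vanish; rewriting $1-\tfrac{L_U}{2\sigma_k}=\tfrac12(2-\tfrac{L_U}{\sigma_k})$ together with $\langle Q_k^{-1}\mathcal{G}_{Q_k}(x_k),\mathcal{G}_{Q_k}(x_k)\rangle=\|x_{k+1}-x_k\|_{Q_k}^2$ gives the stated decrease. The scalar specialization is then a direct substitution $Q_k=t^{-1}I$, $\sigma_k=t^{-1}$, $L_L=L_U=L$, under which $\langle Q_k^{-1}\mathcal{G},\mathcal{G}\rangle=t\|\mathcal{G}\|^2$ and the factor $1-\tfrac{Lt}{2}$ multiplies it to produce $\tfrac t2(2-Lt)\|\mathcal{G}\|^2$, matching \cite[Lem.~5.1]{drusvyatskiy2018error}. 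I expect the convex-analytic step and the two applications of the model error to be routine; the only place requiring care is the metric reconciliation, since mishandling the $Q_k^{-2}$ versus $Q_k^{-1}$ distinction would either produce the wrong constant or break the inequality direction.
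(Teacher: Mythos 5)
Your proposal is correct and follows essentially the same route as the paper's proof: the strong-convexity growth inequality at the minimizer, expansion of $\|y-x_{k+1}\|_{Q_k}^2$ about $x_k$ to get \eqref{eq:gi-model}, then the two one-sided model bounds from Lemma~\ref{lem:model_two_sided} combined with the spectral comparison $\|x_{k+1}-x_k\|^2\le\sigma_k^{-1}\|x_{k+1}-x_k\|_{Q_k}^2$ (which is exactly your $Q_k^{-2}\preceq\sigma_k^{-1}Q_k^{-1}$ observation in disguise) to produce the coefficient $1-\tfrac{L_U}{2\sigma_k}$.
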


\begin{proof}
For brevity let $d_k:=x_{k+1}-x_k$ and $\mathcal{G}_{Q_k} := \mathcal{G}_{Q_k}(x_k)=Q_k(x_k-x_{k+1})=-Q_k d_k$.
Consider the strongly convex function $F_{Q_k}(x;x_k)$. Since $x_{k+1}$ is its unique minimizer, for all $y$,
\[
F_{Q_k}(y;x_k)\ \ge\ F_{Q_k}(x_{k+1};x_k)\ +\ \tfrac12\|y-x_{k+1}\|_{Q_k}^2.
\]
Therefore, we get
\[
F(y;x_k)+\tfrac12\|y-x_k\|_{Q_k}^2\ \ge\ F_{Q_k}(x_{k+1};x_k)\ +\ \tfrac12\|y-x_{k+1}\|_{Q_k}^2.
\]
Write $y-x_{k+1}=(y-x_k)+Q_k^{-1}\mathcal{G}_{Q_k}$ and expand the last term:
\[
\|y-x_{k+1}\|_{Q_k}^2
=\|y-x_k\|_{Q_k}^2 + 2\langle \mathcal{G}_{Q_k},\,y-x_k\rangle + \langle Q_k^{-1}\mathcal{G}_{Q_k},\mathcal{G}_{Q_k}\rangle.
\]
Substituting this into the previous inequality and cancelling $\tfrac12\|y-x_k\|_{Q_k}^2$ from both sides yields \eqref{eq:gi-model}.

For the function values, Lemma~\ref{lem:model_two_sided} gives, for all $y$,
\[
F(y)\ \ge\ F(y;x_k)-\tfrac{L_L}{2}\|y-x_k\|^2,
\]
Combining this with \eqref{eq:gi-model} yields
\begin{equation}
\begin{aligned} \label{eq:comb_lw}
F(y)
&\ \ge\ F_{Q_k}(x_{k+1};x_k)
  +\langle \mathcal{G}_{Q_k},y-x_k\rangle
  +\tfrac12\langle Q_k^{-1}\mathcal{G}_{Q_k},\mathcal{G}_{Q_k}\rangle
  -\tfrac{L_L}{2}\|y-x_k\|^2
\end{aligned}
\end{equation}
Lemma~\ref{lem:model_two_sided} also gives, at $x_{k+1}$,
\[
F(x_{k+1};x_k)\ \ge\ F(x_{k+1})-\tfrac{L_U}{2}\|d_k\|^2.
\]
Thus
\[
F_{Q_k}(x_{k+1};x_k)
=F(x_{k+1};x_k)+\tfrac12\|d_k\|_{Q_k}^2
\ \ge\ F(x_{k+1})+\tfrac12\|d_k\|_{Q_k}^2 - \tfrac{L_U}{2}\|d_k\|^2.
\]
Since $Q_k\succeq\sigma_k I$, we have $\|d_k\|^2\le\sigma_k^{-1}\|d_k\|_{Q_k}^2$, so
\[
F_{Q_k}(x_{k+1};x_k)
\ \ge\ F(x_{k+1})+\Big(\tfrac12-\tfrac{L_U}{2\sigma_k}\Big)\|d_k\|_{Q_k}^2
=F(x_{k+1})+\Big(\tfrac12-\tfrac{L_U}{2\sigma_k}\Big)\langle Q_k^{-1}\mathcal{G}_{Q_k},\mathcal{G}_{Q_k}\rangle.
\]
Combining this with \eqref{eq:comb_lw} yields
\[
\begin{aligned}
F(y)
&\ \ge\ F(x_{k+1})
  +\langle \mathcal{G}_{Q_k},y-x_k\rangle
  +\Big(1-\tfrac{L_U}{2\sigma_k}\Big)\langle Q_k^{-1}\mathcal{G}_{Q_k},\mathcal{G}_{Q_k}\rangle
  -\tfrac{L_L}{2}\|y-x_k\|^2,
\end{aligned}
\]
which is \eqref{eq:gi-function}. Setting $y=x_k$ removes the last term and gives \eqref{eq:gi-decrease}.
\end{proof}

% \newpage

\begin{definition}[Metric step-size error bound]\label{def:eb}
Let $\{x_k\}$ be generated by Algorithm~\ref{alg:adaptive-pcx}, and let $x^\star$ be a limit point of $\{x_k\}$.
We say that $\{x_k\}$ satisfies a \emph{metric step-size error bound at $x^\star$} if there exist a constant
$\kappa>0$, a neighborhood $\mathcal U$ of $x^\star$, and an index $K$ such that, for all $k\ge K$,
\begin{equation}\label{eq:metric-EB}
  x_k\in\mathcal U,
  \qquad
  \|x_k-x^\star\|
  \ \le\ \kappa\,\big\|\mathcal{G}_{Q_k}(x_k)\big\|.
\end{equation}
\end{definition}

\noindent\textit{Relation to existing error-bound notions and implications for prox-convex.}
When $Q_k=(1/t)I$, condition \eqref{eq:metric-EB} reduces exactly to the scalar step-size error bound of \cite[Def.~5.4]{drusvyatskiy2018error}. More generally, if the metrics are uniformly well-conditioned, $\sigma_{\min}I\preceq Q_k\preceq\sigma_{\max}I$, then the metric step-size EB \eqref{eq:metric-EB} is equivalent, up to constants depending only on $(\sigma_{\min},\sigma_{\max})$, to the corresponding scalar step-size EB by norm equivalence. Combined with the Taylor-like model error bounds of $F_{Q_k}(\cdot;x_k)$, any local slope EB (or, in the convex/composite case, any of its equivalent forms such as a K--\L{} exponent $1/2$ or quadratic growth) therefore yields a step-size error bound of the form~\eqref{eq:metric-EB} \cite[Thm.~3.5]{drusvyatskiy2021nonsmooth}, which is exactly the ingredient we use below to establish $Q$-linear convergence of the prox-convex iterates.

%===========================
% Q-linear rate under metric step-size EB
%===========================
\begin{theorem}[Q-linear convergence under a metric step-size error bound]
\label{thm:qlinear-pcx}

Assume Definition~\ref{def:eb} holds at a limit point $x^\star$ of the sequence $\{x_k\}$ generated by
Algorithm~\ref{alg:adaptive-pcx}, and the uniform lower spectral bound satisfies $\underline q>L_U/(2-\alpha_1)$.
Set $F^\star := F(x^\star)$. Then all steps are accepted (Lemma~\ref{lem:accept-Q}), and
\begin{equation}\label{eq:qlinear-rate}
  F(x_{k+1})-F^\star\ \le\ q^\star\,\big(F(x_k)-F^\star\big)
  \qquad\text{for all }k\ge K,
\end{equation}
with contraction factor
\begin{equation}\label{eq:q-factor}
  q^\star\ :=\
  1-\min\!\left\{1,\,
  \frac{\,2-\tfrac{L_U}{\underline q}\,}
           {\overline q\big(2+L_L\big)\,\bar\kappa^{\,2}}
  \right\}
  \ \in[0,1),
  \qquad
  \bar\kappa:=\max\{1,\kappa\}.
\end{equation}
Consequently, $\{F(x_k)\}$ converges $Q$-linearly to $F(x^\star)$.
\end{theorem}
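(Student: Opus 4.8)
The plan is to convert the metric step-size error bound into a one-step contraction of the function gap $a_k:=F(x_k)-F^\star$, by sandwiching $\|\mathcal{G}_{Q_k}(x_k)\|^2$ between a sufficient-decrease lower bound and an error-bound upper bound, both read directly from the gradient inequalities of Lemma~\ref{lem:grad-ineq-pcx}. I would first dispatch the preliminaries. Because $\sigma_k:=\lambda_{\min}(Q_k)\ge\underline q>L_U/(2-\alpha_1)$, Lemma~\ref{lem:accept-Q} yields $\rho_k\ge\alpha_1$ on the first trial, so every step is accepted, the analyzed iterates coincide with the algorithm's output, and \eqref{eq:gi-function}--\eqref{eq:gi-decrease} apply for all $k\ge K$; the same strict inequality gives $1-L_U/(2\sigma_k)>0$, which is what makes both the decrease term and the retained curvature term usable. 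I would also observe that $F(x_k)$ is nonincreasing (Theorem~\ref{thm:suff-dec-Q}) and, by lower semicontinuity of $F$ at the limit point $x^\star$, that $F(x_k)\ge F^\star$; hence $a_k\ge0$ and the claimed contraction is well posed.

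Next I would record the two quantitative estimates. The sufficient-decrease inequality \eqref{eq:gi-decrease}, together with $\|x_{k+1}-x_k\|_{Q_k}^2=\langle Q_k^{-1}\mathcal{G}_{Q_k}(x_k),\mathcal{G}_{Q_k}(x_k)\rangle\ge\overline q^{-1}\|\mathcal{G}_{Q_k}(x_k)\|^2$ and $\sigma_k\ge\underline q$, gives $a_k-a_{k+1}\ge c_2\,\|\mathcal{G}_{Q_k}(x_k)\|^2$ with $c_2:=(2-L_U/\underline q)/(2\overline q)>0$. For the upper bound I would set $y=x^\star$ in \eqref{eq:gi-function}; the left side becomes $F^\star$, and rearranging produces an upper bound on $a_{k+1}$ whose linear term $\langle\mathcal{G}_{Q_k}(x_k),x_k-x^\star\rangle$ and quadratic term $\tfrac{L_L}{2}\|x_k-x^\star\|^2$ I would control by Cauchy--Schwarz and the error bound $\|x_k-x^\star\|\le\kappa\|\mathcal{G}_{Q_k}(x_k)\|$, giving a leading constant $c_1:=\kappa+\tfrac{L_L}{2}\kappa^2$, which I then bound by $\tfrac12(2+L_L)\bar\kappa^2$ after replacing $\kappa$ with $\bar\kappa=\max\{1,\kappa\}$.

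The decisive step --- and the one that distinguishes the sharp factor $1-c_2/c_1$ from the weaker $1-c_2/(c_1+c_2)$ that a crude combination would give --- is to \emph{retain} rather than discard the nonpositive curvature term $-(1-L_U/(2\sigma_k))\langle Q_k^{-1}\mathcal{G}_{Q_k}(x_k),\mathcal{G}_{Q_k}(x_k)\rangle$ in \eqref{eq:gi-function}. Bounding the underlying quantity below by exactly $c_2\|\mathcal{G}_{Q_k}(x_k)\|^2$ (the same constant as in the decrease bound) tightens the upper estimate to $a_{k+1}\le(c_1-c_2)\|\mathcal{G}_{Q_k}(x_k)\|^2$. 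Substituting $\|\mathcal{G}_{Q_k}(x_k)\|^2\le(a_k-a_{k+1})/c_2$ from the decrease bound then gives $a_{k+1}\le\frac{c_1-c_2}{c_2}(a_k-a_{k+1})$, which rearranges to $a_{k+1}\le(1-c_2/c_1)\,a_k$. Using $c_1\le\tfrac12(2+L_L)\bar\kappa^2$ to lower-bound $c_2/c_1$ delivers the stated $q^\star$, with $\min\{1,\cdot\}$ absorbing the degenerate regime $c_1\le c_2$ (where $a_{k+1}=0$) and keeping $q^\star\in[0,1)$; iterating \eqref{eq:qlinear-rate} from $k=K$ then gives $Q$-linear convergence.

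The main obstacle is the bookkeeping forced by the two features that go beyond \cite{drusvyatskiy2018error}: the iteration-dependent matrix metric $Q_k$, which requires consistently converting among $\|\cdot\|_{Q_k}$, $\|\cdot\|_{Q_k^{-1}}$, and $\|\cdot\|$ via the uniform spectral bounds $\underline q,\overline q$ of Lemma~\ref{lem:Qk-spectral-bounds}; and the model-error asymmetry $L_L\neq L_U$, which must be tracked so that $L_U$ governs the acceptance/decrease constant $c_2$ while $L_L$ governs the error-bound constant $c_1$. Getting the sharp rate rather than a looser one hinges entirely on the observation above that the curvature term must be kept; discarding it is precisely what would cost the extra $+c_2$ in the denominator.
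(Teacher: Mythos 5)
Your proposal is correct and follows essentially the same route as the paper's proof: both apply Lemma~\ref{lem:grad-ineq-pcx} at $y=x^\star$, crucially \emph{retain} the nonpositive curvature term (lower-bounded by the same constant $c_2=(2-L_U/\underline q)/(2\overline q)$ that appears in the sufficient-decrease estimate), invoke the metric error bound to control the linear and quadratic terms, and combine $a_{k+1}\le\frac{c_1-c_2}{c_2}(a_k-a_{k+1})$ into $a_{k+1}\le(1-c_2/c_1)a_k$, matching \eqref{eq:q-factor}. The only cosmetic difference is that the paper introduces a per-iteration ratio $\kappa_k:=\max\{1,\|x_k-x^\star\|/\|\mathcal{G}_{Q_k}(x_k)\|\}$ and applies the error bound only at the end, whereas you invoke it up front; the resulting constants are identical.
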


\begin{proof}
Let
$\mathcal{G}_{Q_k}:=\mathcal{G}_{Q_k}(x_k)$ and
$\sigma_k:=\lambda_{\min}(Q_k)$.
Applying Lemma~\ref{lem:grad-ineq-pcx} with $y=x^\star$, and using
$\langle Q_k^{-1}\mathcal{G}_{Q_k},\mathcal{G}_{Q_k}\rangle
\ge \overline q^{-1}\|\mathcal{G}_{Q_k}\|^2$ and $\sigma_k\ge \underline q$, gives
\begin{equation}\label{eq:value-gap-local}
  F(x_{k+1})-F(x^\star)
  \ \le\
  \|\mathcal{G}_{Q_k}\|\,\|x_k-x^\star\|
  +\frac{L_L}{2}\|x_k-x^\star\|^2
  +\frac{L_U/\underline{q}-2}{2\bar{q}}\,\|\mathcal{G}_{Q_k}\|^2.
\end{equation}
Define
\[
  \kappa_k\ :=\ \max\bigg\{1,\ \frac{\|x_k-x^\star\|}{\|\mathcal{G}_{Q_k}\|}\bigg\},
\]
(with the convention that the ratio is $0$ when $\mathcal{G}_{Q_k}=0$).
Then $\|x_k-x^\star\|\le\kappa_k\|\mathcal{G}_{Q_k}\|$ and
$\|x_k-x^\star\|^2\le\kappa_k^2\|\mathcal{G}_{Q_k}\|^2$, so from
\eqref{eq:value-gap-local} we deduce
\[
  F(x_{k+1})-F(x^\star)
  \ \le\
  \Bigg[ \kappa_k+\frac{L_L}{2}\kappa_k^2
        +\frac{L_U/\underline{q}-2}{2\bar{q}}\Bigg]\,
  \|\mathcal{G}_{Q_k}\|^2
  \ \le\
  \Bigg[\bigg(1+\frac{L_L}{2}\bigg)\kappa_k^2
        +\frac{L_U/\underline{q}-2}{2\bar{q}}\Bigg]\,
  \|\mathcal{G}_{Q_k}\|^2.
\]

On the other hand, the sufficient decrease estimate
\eqref{eq:gi-decrease} and the bound
$\langle Q_k^{-1}\mathcal{G}_{Q_k},\mathcal{G}_{Q_k}\rangle\ge\bar{q}^{-1}\|\mathcal{G}_{Q_k}\|^2$ yield
\[
  F(x_k)-F(x_{k+1})
  \ \ge\
  \Big(1-\frac{L_U}{2\underline{q}}\Big)\,\langle Q_k^{-1}\mathcal{G}_{Q_k},\mathcal{G}_{Q_k}\rangle
  \ \ge\
  \frac{2-\tfrac{L_U}{\underline{q}}}{2\bar{q}}\,\|\mathcal{G}_{Q_k}\|^2.
\]
Combining the last two displays, we obtain
\[
  \frac{F(x_{k+1})-F^\star}{F(x_k)-F(x_{k+1})}
  \ \le\
  \frac{
    \big(1+\tfrac{L_L}{2}\big)\kappa_k^2
      +\tfrac{L_U/\underline{q}-2}{2\bar{q}}}
    {\tfrac{2-\tfrac{L_U}{\underline{q}}}{2\bar{q}}}
  \ =\
  \frac{2\bar{q}\big(1+\tfrac{L_L}{2}\big)\kappa_k^2}
       {2-\tfrac{L_U}{\underline{q}}} - 1.
\]
Hence
\[
  F(x_{k+1})-F^\star
  \ \le\
  \hat{q}^\star_k\,\big(F(x_k)-F(x_{k+1})\big),
  \qquad
  \hat{q}^\star_k:=
  \frac{2\bar{q}\big(1+\tfrac{L_L}{2}\big)\kappa_k^2}
       {2-\tfrac{L_U}{\underline{q}}} - 1.
\]
Rewriting this as
$F(x_{k+1})-F^\star \le \frac{\hat{q}^\star_k}{1+\hat{q}^\star_k}\big(F(x_k)-F^\star\big)$, we see that the per-iteration contraction factor is
\[
  q_k^\star\ :=\ \frac{\hat{q}^\star_k}{1+\hat{q}^\star_k}
  \ =\
  1-\frac{2-\tfrac{L_U}{\underline{q}}}
          {2\bar{q}\big(1+\tfrac{L_L}{2}\big)\kappa_k^2}.
\]

Now, invoke the metric step-size error bound
\eqref{eq:metric-EB}. For all sufficiently large $k$, $\|x_k-x^\star\| \le \kappa\|\mathcal{G}_{Q_k}\|$, and therefore
$\kappa_k\le\bar\kappa:=\max\{1,\kappa\}$. Hence,
for all sufficiently large $k$,
\[
  q_k^\star\ \le\
  1-\frac{2-\tfrac{L_U}{\underline q}}
          {2\,\overline q\big(1+\tfrac{L_L}{2}\big)\,\bar\kappa^{\,2}}
  \ =\
  1-\frac{\,2-\tfrac{L_U}{\underline q}\,}
           {\overline q\big(2+L_L\big)\,\bar\kappa^{\,2}}
  \ \le\
  1- \min \Bigg\{1, \ \frac{\,2-\tfrac{L_U}{\underline q}\,}
           {\overline q\big(2+L_L\big)\,\bar\kappa^{\,2}} \Bigg\}
  \ =:\ q^\star\ \in[0,1).
\]
This gives \eqref{eq:qlinear-rate} and completes the proof.
\end{proof}

\begin{remark}[Comparison with prox-linear] \label{rem:qlinear-scalar}
In the scalar case $Q_k=t^{-1}I$, the prox-convex step coincides with the
prox-linear step with parameter $t$ from
\cite{drusvyatskiy2018error}, and
$\mathcal{G}_{Q_k}(x_k)=\mathcal{G}_{t^{-1}}(x_k)$. The metric error bound
\eqref{eq:metric-EB} then specializes exactly to the scalar step-size error bound
of \cite[Def.~5.4]{drusvyatskiy2018error}:
\[
  \|x_k-x^\star\|
  \ \le\ \kappa\,\|\mathcal{G}_{t^{-1}}(x)\|.
\]
In this scalar setting we have $\underline q = \overline q = t^{-1}$
and $L_U=L_L=L$ (the symmetric-error bounds constant of \cite{drusvyatskiy2018error}), so the
general contraction factor \eqref{eq:q-factor} becomes
\[
  q^\star\ \le\
  1-\frac{t(2-L t)}{(2+L)\,\bar\kappa^{\,2}},
  \qquad
  \bar\kappa=\max\{1,\kappa\}.
\]
Choosing the standard stepsize $t=1/L$ yields
\[
  q^\star\ \le\ 1-\frac{1}{L(2+L)\,\bar\kappa^{\,2}},
\]
which \emph{exactly matches} the prox-linear linear rate constant in
\cite[Thm.~5.5]{drusvyatskiy2018error}.
\end{remark}

% \newpage

\subsection{Taylor-like framework and stationarity}
We now complete the argument using the Taylor-like model framework of Drusvyatskiy--Ioffe--Lewis~\cite{drusvyatskiy2021nonsmooth}.
Their key estimate~\cite[Thm.~3.1]{drusvyatskiy2021nonsmooth} (proved via Ekeland’s variational principle~\cite{ekeland1974variational}) shows that a uniform quadratic model-error bound yields a nearby point with small stationarity measure (see also~\cite[Cor.~3.2]{drusvyatskiy2021nonsmooth}).
Building on this, \cite[Cor.~3.3]{drusvyatskiy2021nonsmooth} implies subsequence stationarity under exact model minimization, asymptotic regularity $\|x_{k+1}-x_k\|\to 0$, and boundedness.
In our setting, these requirements are ensured by: (i) two-sided quadratic model error bounds (Lemma~\ref{lem:model_two_sided}); (ii) sufficient decrease and asymptotic regularity along accepted steps (Theorem~\ref{thm:suff-dec-Q}); and (iii) boundedness of the accepted subsequence.
Therefore, every cluster point of the accepted iterates is first-order stationary; combined with Theorem~\ref{thm:regular_mono_or_smooth}, stationarity holds for both the limiting and Fr\'echet subdifferentials.
\begin{theorem}[Stationary cluster points]\label{thm:adaptive-stationary}
Let $\{x_k\}_{k\in\mathcal S}$ be the accepted iterates of Algorithm~\ref{alg:adaptive-pcx}.
Assume $F$ is proper, closed, bounded below, $\{x_k\}_{k\in\mathcal S}$ is bounded, and Lemma~\ref{lem:model_two_sided} holds.
Then every cluster point $\bar x$ of $\{x_k\}_{k\in\mathcal S}$ is first-order stationary: $0\in\partial F(\bar x)$.
\end{theorem}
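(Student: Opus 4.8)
The plan is to recognize the accepted prox-convex iterates as an instance of the Taylor-like model scheme of Drusvyatskiy, Ioffe, and Lewis~\cite{drusvyatskiy2021nonsmooth} and to invoke their cluster-point stationarity result. That framework requires three ingredients: a uniform two-sided quadratic model-error bound, exact minimization of the proximally regularized model at each step, and asymptotic regularity together with boundedness of the iterates. I would verify each in turn and then apply \cite[Cor.~3.3]{drusvyatskiy2021nonsmooth}.

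First, Lemma~\ref{lem:model_two_sided} supplies the Taylor-like estimate directly: it holds for all $x$, so $|F(x)-F(x;x_k)|\le \tfrac{C}{2}\|x-x_k\|^2$ with the single uniform constant $C:=\max\{L_U,L_L\}=L_L$, and each model $F(\cdot;x_k)$ is convex (Remark~\ref{rem:negative-channel}). Second, Algorithm~\ref{alg:adaptive-pcx} defines $x_{k+1}$ as the exact minimizer of $F_{Q_k}(\cdot;x_k)=F(\cdot;x_k)+\tfrac12\|\cdot-x_k\|_{Q_k}^2$, so the prox-step is exact; the first-order optimality condition reads $\mathcal{G}_{Q_k}(x_k)=Q_k(x_k-x_{k+1})\in\partial_x F(x_{k+1};x_k)$, exhibiting $\mathcal{G}_{Q_k}(x_k)$ as the residual that certifies near-stationarity of the model at $x_{k+1}$. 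Third, Theorem~\ref{thm:suff-dec-Q} gives $\|x_{k+1}-x_k\|_{Q_k}\to 0$ and, via the uniform lower bound $\underline q\,I\preceq Q_k$ of Lemma~\ref{lem:Qk-spectral-bounds}, the asymptotic regularity $\|x_{k+1}-x_k\|\to 0$ as well as $\|\mathcal{G}_{Q_k}(x_k)\|\to 0$ along $\mathcal S$; boundedness of $\{x_k\}_{k\in\mathcal S}$ is assumed.

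The stationarity transfer then runs through the key estimate \cite[Thm.~3.1]{drusvyatskiy2021nonsmooth} (an Ekeland variational principle argument): the model-error bound together with exact prox-minimization produces, for each $k$, a nearby point $\hat x_k$ with both $\|\hat x_k-x_{k+1}\|$ and $\mathrm{dist}(0,\partial F(\hat x_k))$ controlled by $\|\mathcal{G}_{Q_k}(x_k)\|$ up to the fixed conditioning and model constants. Along a subsequence converging to a cluster point $\bar x$, asymptotic regularity forces $x_{k+1}\to\bar x$, hence $\hat x_k\to\bar x$, while $\|\mathcal{G}_{Q_k}(x_k)\|\to 0$ forces $\mathrm{dist}(0,\partial F(\hat x_k))\to 0$; outer semicontinuity of the limiting subdifferential then yields $0\in\partial F(\bar x)$.

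The main obstacle is that \cite{drusvyatskiy2021nonsmooth} is written for a scalar proximal parameter, whereas prox-convex uses the iteration-dependent matrix metric $Q_k$. I would dispatch this exactly as in the discussion following Definition~\ref{def:eb}: the uniform spectral bounds $\underline q\,I\preceq Q_k\preceq\overline q\,I$ render the variable metric equivalent, up to the fixed constants $\underline q,\overline q$, to a scalar one, so the near-stationarity estimate carries over with only these constants and $C=L_L$ entering. The one point meriting care is confirming that the \emph{metric} residual $\|\mathcal{G}_{Q_k}(x_k)\|$ is the quantity driven to zero, which it is, by the optimality inclusion above and Theorem~\ref{thm:suff-dec-Q}. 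Finally, under the regularity of Theorem~\ref{thm:regular_mono_or_smooth} one upgrades limiting to Fr\'echet stationarity, although the stated conclusion $0\in\partial F(\bar x)$ requires only the limiting subdifferential.
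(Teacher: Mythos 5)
Your proposal follows the same route as the paper's proof: verify the hypotheses of the Taylor-like framework (quadratic model error from Lemma~\ref{lem:model_two_sided}, exact minimization by Algorithm~\ref{alg:adaptive-pcx}, asymptotic regularity from Theorem~\ref{thm:suff-dec-Q}, boundedness by assumption) and invoke \cite[Cor.~3.3]{drusvyatskiy2021nonsmooth}. One presentational point first: the model the framework requires to be minimized is the \emph{regularized} model $F_{Q_k}(\cdot;x_k)$, since that is what the algorithm minimizes, so the growth function must absorb the quadratic term,
\[
\big|F_{Q_k}(y;x_k)-F(y)\big|\ \le\ \tfrac12\big(L_L+\lambda_{\max}(Q_k)\big)\|y-x_k\|^2\ \le\ \tfrac12\big(L_L+\overline q\big)\|y-x_k\|^2,
\]
rather than $C=L_L$ for the unregularized model. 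Your separate ``matrix versus scalar metric'' patch is really this same observation in disguise: the prox term is simply part of the model, and the spectral bounds of Lemma~\ref{lem:Qk-spectral-bounds} make the growth function $\omega(r)=\tfrac12(L_L+\overline q)r^2$ uniform in $k$, which is exactly how the paper sets it up.

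The genuine gap is in your closing step: ``$\hat x_k\to\bar x$ and $\mathrm{dist}(0,\partial F(\hat x_k))\to 0$, hence $0\in\partial F(\bar x)$ by outer semicontinuity'' is not valid as stated for a merely lower semicontinuous $F$. The limiting subdifferential is outer semicontinuous only with respect to \emph{$F$-attentive} convergence, i.e.\ one must also have $F(\hat x_k)\to F(\bar x)$; since $\cvx$ may be an indicator function, $F$ here is genuinely discontinuous and this requirement cannot be skipped (the same caveat applies if you argue via slopes, since \cite[Thm.~3.1]{drusvyatskiy2021nonsmooth} controls $|\nabla F|(\hat x_k)$, and small slopes yield $0\in\partial F(\bar x)$ only along $F$-attentive limits). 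The paper handles precisely this point before applying the corollary: accepted-step monotonicity and boundedness below give convergence of $F(x_k)$ along $\mathcal S$, and combined with the model-error bound, model minimality, and lower semicontinuity this forces $(x_{k_i},F(x_{k_i}))\to(\bar x,F(\bar x))$. The function-value control in the conclusion of \cite[Thm.~3.1]{drusvyatskiy2021nonsmooth}, which you did not carry along, is what makes the nearby points $\hat x_k$ $F$-attentively convergent. With this repaired, your argument coincides with the paper's proof.
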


\begin{proof}
By Theorem~\ref{thm:suff-dec-Q}, we have $\|x_{k+1}-x_k\|\to0$ along $\mathcal S$.
Moreover, Lemma~\ref{lem:model_two_sided} and the spectral upper bound $\lambda_{\max}(Q_k)\le \overline q$ imply that for all $y$ and all accepted indices $k\in\mathcal S$,
\[
\big|F_{Q_k}(y;x_k)-F(y)\big|
\le
\frac{L_L+\lambda_{\max}(Q_k)}{2}\,\|y-x_k\|^2
\le
\omega(\|y-x_k\|),
\qquad
\omega(r):=\tfrac12(L_L+\overline q)r^2,
\]
so the growth condition of \cite[Cor.~3.3]{drusvyatskiy2021nonsmooth} holds.
Since $\{x_k\}_{k\in\mathcal S}$ is bounded, it has a cluster point $\bar x$; take a subsequence $x_{k_i}\to \bar x$.
By (accepted-step) monotonicity and boundedness below, $F(x_k)$ converges along $\mathcal S$, hence $F(x_{k_i})$ converges as well; in particular, $(x_{k_i},F(x_{k_i}))\to (\bar x,F(\bar x))$.
Applying \cite[Cor.~3.3]{drusvyatskiy2021nonsmooth} yields that $\bar x$ is stationary for $F$, i.e., $0\in \partial F(\bar x)$.
\end{proof}

\begin{remark}[Inexact solves and model-decrease termination]
By the Taylor-like framework of \cite{drusvyatskiy2021nonsmooth}, our two-sided model bounds imply robustness to \emph{inexact} subproblem solves. If
\[
F_{Q_k}(x_{k+1};x_k)\ \le\ \inf_y F_{Q_k}(y;x_k)+\varepsilon_k,
\]
then \cite[Cor.~5.2]{drusvyatskiy2021nonsmooth} guarantees a nearby point $\hat x_k$ with
\[
|\nabla F|(\hat x_k)\ \le\ \sqrt{12\,\eta_k\,\varepsilon_k}
\;+\;3\eta_k\|x_{k+1}-x_k\|,
\qquad
\eta_k:=L_L+\lambda_{\max}(Q_k).
\]
Consequently, if $\varepsilon_k\to 0$ and $\|x_{k+1}-x_k\|\to0$ along accepted steps, the stationarity conclusions remain valid.

Moreover, the \emph{model decrease}
\[
\Delta_k:=F(x_k)-\inf_y F_{Q_k}(y;x_k)
\]
is a practical stopping criteria: \cite[Cor.~5.5]{drusvyatskiy2021nonsmooth} yields, for any $\eta\ge \eta_k$, the existence of $\hat x$ with
\[
|\nabla F|(\hat x)\ \le\ \sqrt{12\,\eta\,\Delta_k}.
\]
\end{remark}

\bibliographystyle{plainurl}
\bibliography{main}

@ARTICLE{cartis2011evaluation,
  title     = "On the Evaluation Complexity of Composite Function Minimization
               with Applications to Nonconvex Nonlinear Programming",
  author    = "Cartis, Coralia and Gould, Nicholas I. M. and Toint, Philippe L.",
  journal   = "SIAM Journal of Optimization",
  volume    =  21,
  number    =  4,
  pages     = "1721--1739",
  month     =  oct,
  year      =  2011,
  url       = "https://doi.org/10.1137/11082381X",
  issn      = "1052-6234"
}

@book{rockafellar1998variational,
  title={Variational Analysis},
  author={Rockafellar, R. Tyrrell and Wets, Roger J-B},
  year={1998},
  publisher={Springer},
  series={Grundlehren der mathematischen Wissenschaften},
  volume={317},
  url={https://doi.org/10.1007/978-3-642-02431-3}
}

@article{aravkin2013sparse,
  title={Sparse/robust estimation and kalman smoothing with nonsmooth log-concave densities: Modeling, computation, and theory},
  author={Aravkin, Aleksandr Y and Burke, James V and Pillonetto, Gianluigi},
  journal={The Journal of Machine Learning Research},
  volume={14},
  number={1},
  pages={2689--2728},
  year={2013},
  publisher={JMLR. org},
  url={http://jmlr.org/papers/v14/aravkin13a.html}
}

@article{lewis2016proximal,
  title={A proximal method for composite minimization},
  author={Lewis, Adrian S and Wright, Stephen J},
  journal={Mathematical Programming},
  volume={158},
  pages={501--546},
  year={2016},
  publisher={Springer},
  url={https://doi.org/10.1007/s10107-015-0943-9}
}

@article{drusvyatskiy2019efficiency,
  title={Efficiency of minimizing compositions of convex functions and smooth maps},
  author={Drusvyatskiy, Dmitriy and Paquette, Courtney},
  journal={Mathematical Programming},
  volume={178},
  pages={503--558},
  year={2019},
  publisher={Springer},
  url="https://doi.org/10.1007/s10107-018-1311-3"
}

@article{beck2009fast,
  title={A fast iterative shrinkage-thresholding algorithm for linear inverse problems},
  author={Beck, Amir and Teboulle, Marc},
  journal={SIAM journal on imaging sciences},
  volume={2},
  number={1},
  pages={183--202},
  year={2009},
  publisher={SIAM},
  url={https://doi.org/10.1137/080716542}
}

@article{nesterov2013gradient,
  title={Gradient methods for minimizing composite functions},
  author={Nesterov, Yu},
  journal={Mathematical programming},
  volume={140},
  number={1},
  pages={125--161},
  year={2013},
  publisher={Springer},
  url        = "https://doi.org/10.1007/s10107-012-0629-5",
}

@BOOK{nocedal2006numerical,
  title     = "Numerical Optimization",
  author    = "Nocedal, Jorge and Wright, Stephen",
  publisher = "Springer",
  edition   =  2,
  url       =  "https://doi.org/10.1007/978-0-387-40065-5",
  month     =  jul,
  year      =  2006
}

@article{yu2022proportional,
  title={Proportional--integral projected gradient method for conic optimization},
  author={Yu, Yue and Elango, Purnanand and Topcu, Ufuk and A{\c{c}}{\i}kme{\c{s}}e, Beh{\c{c}}et},
  journal={Automatica},
  volume={142},
  pages={110359},
  year={2022},
  publisher={Elsevier},
  URL = {https://doi.org/10.1016/j.automatica.2022.110359}
}

@article{levenberg1944method,
  title={A method for the solution of certain non-linear problems in least squares},
  author={Levenberg, Kenneth},
  journal={Quarterly of applied mathematics},
  volume={2},
  number={2},
  pages={164--168},
  year={1944},
  url={https://doi.org/10.1090/qam/10666}
}

@article{marquardt1963algorithm,
  title={An algorithm for least-squares estimation of nonlinear parameters},
  author={Marquardt, Donald W},
  journal={Journal of the society for Industrial and Applied Mathematics},
  volume={11},
  number={2},
  pages={431--441},
  year={1963},
  publisher={SIAM},
  url={https://doi.org/10.1137/0111030}
}

@article{burke1985descent,
  title={Descent methods for composite nondifferentiable optimization problems},
  author={Burke, James V},
  journal={Mathematical Programming},
  volume={33},
  pages={260--279},
  year={1985},
  publisher={Springer},
  url={https://doi.org/10.1007/BF01584377}
}

@article{burke1995gauss,
  title={A {G}auss-{N}ewton method for convex composite optimization},
  author={Burke, James V and Ferris, Michael C},
  journal={Mathematical Programming},
  volume={71},
  number={2},
  pages={179--194},
  year={1995},
  publisher={Springer},
  url={https://doi.org/10.1007/BF01585997}
}

@article{nesterov2007modified,
  title={Modified {G}auss--{N}ewton scheme with worst-case guarantees for global performance},
  author={Nesterov, Yu},
  journal={Optimisation methods and software},
  volume={22},
  number={3},
  pages={469--483},
  year={2007},
  publisher={Taylor \& Francis},
  url={https://doi.org/10.1080/08927020600643812}
}

@article{drusvyatskiy2018error,
  title={Error bounds, quadratic growth, and linear convergence of proximal methods},
  author={Drusvyatskiy, Dmitriy and Lewis, Adrian},
  journal="Mathematics of Operations Research",
  volume={43},
  number={3},
  pages={919--948},
  url="https://doi.org/10.1287/moor.2017.0889",
  year={2018},
  publisher={INFORMS}
}

@book{clarke1983optimization,
  title={Optimization and Nonsmooth Analysis},
  author={Clarke, Frank H.},
  year={1983},
  publisher={John Wiley \& Sons},
  series={Classics in Applied Mathematics},
  edition={First},
  url={https://doi.org/10.1137/1.9781611971309}
}

@book{bauschke2017convex,
  title={Convex Analysis and Monotone Operator Theory in Hilbert Spaces},
  author={Bauschke, Heinz H. and Combettes, Patrick L.},
  year={2017},
  edition={2nd},
  publisher={Springer},
  series={CMS Books in Mathematics},
  url={https://doi.org/10.1007/978-3-319-48311-5}
}

@article{drusvyatskiy2021nonsmooth,
  title={Nonsmooth optimization using {T}aylor-like models: error bounds,
  convergence, and termination criteria},
  author={Drusvyatskiy, Dmitriy and Lewis, Adrian S. and Ioffe, Alexander D.},
  journal={Mathematical Programming},
  volume={185},
  pages={357--383},
  year={2021},
  publisher={Springer},
  url={https://doi.org/10.1007/s10107-019-01432-w}
}

@incollection{fletcher2009model,
  title={A model algorithm for composite nondifferentiable optimization problems},
  author={Fletcher, Roger},
  booktitle={Mathematical Programming Study},
  pages={17:67--76},
  year={1982},
  publisher={Springer},
  url={https://doi.org/10.1007/BFB0120959}
}

@incollection{powell1970new,
  title={A new algorithm for unconstrained optimization},
  author={Powell, Michael JD},
  booktitle={Nonlinear programming},
  pages={31--65},
  year={1970},
  publisher={Elsevier},
  url={https://doi.org/10.1016/B978-0-12-597050-1.50006-3}
}

@book{conn2000trust,
  title={Trust region methods},
  author={Conn, Andrew R and Gould, Nicholas IM and Toint, Philippe L},
  year={2000},
  publisher={SIAM},
  url={https://doi.org/10.1137/1.9780898719857}
}

@article{parikh2014proximal,
  title={Proximal algorithms},
  author={Parikh, Neal and Boyd, Stephen and others},
  journal={Foundations and trends{\textregistered} in Optimization},
  volume={1},
  number={3},
  pages={127--239},
  year={2014},
  publisher={Now Publishers, Inc.},
  url={https://doi.org/10.1561/2400000003}
}

@article{burke2018line,
  title={Line search and trust-region methods for convex-composite optimization},
  author={Burke, James V and Engle, Abraham},
  journal={arXiv preprint arXiv:1806.05218},
  year={2018},
  url={https://arxiv.org/abs/1806.05218}
}

@article{burke2018strong,
  title={Strong metric (sub) regularity of {KKT} mappings for piecewise linear-quadratic convex-composite optimization},
  author={Burke, James V and Engle, Abraham},
  journal="Mathematics of Operations Research",
  volume={45},
  number={3},
  url="https://doi.org/10.1287/moor.2019.1027",
  year={2020},
  publisher={INFORMS}
}

@article{burke1987second,
  title={Second order necessary and sufficient conditions for convex composite NDO},
  author={Burke, James V},
  journal={Mathematical programming},
  volume={38},
  number={3},
  pages={287--302},
  year={1987},
  publisher={Springer},
  url={https://doi.org/10.1007/BF02592016}
}

@article{rockafellar1989second,
  title={Second-order optimality conditions in nonlinear programming obtained by way of epi-derivatives},
  author={Rockafellar, R Tyrrell},
  journal={Mathematics of Operations Research},
  volume={14},
  number={3},
  pages={462--484},
  year={1989},
  publisher={INFORMS},
  url={https://doi.org/10.1287/moor.14.3.462}
}

@article{aravkin2024levenberg,
  title={A Levenberg--Marquardt method for nonsmooth regularized least squares},
  author={Aravkin, Aleksandr Y and Baraldi, Robert and Orban, Dominique},
  journal={SIAM Journal on Scientific Computing},
  volume={46},
  number={4},
  pages={A2557--A2581},
  year={2024},
  publisher={SIAM},
  url={https://doi.org/10.1137/22M1538971}
}

@article{aravkin2022proximal,
  title={A proximal quasi-Newton trust-region method for nonsmooth regularized optimization},
  author={Aravkin, Aleksandr Y and Baraldi, Robert and Orban, Dominique},
  journal={SIAM Journal on Optimization},
  volume={32},
  number={2},
  pages={900--929},
  year={2022},
  publisher={SIAM},
  url={https://doi.org/10.1137/21M1409536}
}

@article{doikov2022high,
  title={High-order optimization methods for fully composite problems},
  author={Doikov, Nikita and Nesterov, Yurii},
  journal={SIAM Journal on Optimization},
  volume={32},
  number={3},
  pages={2402--2427},
  year={2022},
  publisher={SIAM},
  url={https://doi.org/10.1137/21M1410063}
}

@article{hu2016convergence,
  title={On convergence rates of linearized proximal algorithms for convex composite optimization with applications},
  author={Hu, Yaohua and Li, Chong and Yang, Xiaoqi},
  journal={SIAM Journal on Optimization},
  volume={26},
  number={2},
  pages={1207--1235},
  year={2016},
  publisher={SIAM},
  url={https://doi.org/10.1137/140993090}
}

@article{yuan1985conditions,
  title={Conditions for convergence of trust region algorithms for nonsmooth optimization},
  author={Yuan, Ya-xiang},
  journal={Mathematical Programming},
  volume={31},
  number={2},
  pages={220--228},
  year={1985},
  publisher={Springer},
  url={https://doi.org/10.1007/BF02591750}
}

@book{fletcher2000practical,
  title={Practical methods of optimization},
  author={Fletcher, Roger},
  year={2000},
  publisher={John Wiley \& Sons},
  url={https://doi.org/10.1002/9781118723203}
}

@article{lewis2002active,
  title={Active sets, nonsmoothness, and sensitivity},
  author={Lewis, Adrian S},
  journal={SIAM Journal on Optimization},
  volume={13},
  number={3},
  pages={702--725},
  year={2002},
  publisher={SIAM},
  url={https://doi.org/10.1137/S1052623401387623}
}

@article{cibulka2018strong,
  title={Strong metric subregularity of mappings in variational analysis and optimization},
  author={Cibulka, Radek and Dontchev, AL and Kruger, AY},
  journal={Journal of Mathematical Analysis and Applications},
  volume={457},
  number={2},
  pages={1247--1282},
  year={2018},
  publisher={Elsevier},
  url={https://doi.org/10.1016/j.jmaa.2016.11.045}
}

@article{ekeland1974variational,
  title={On the variational principle},
  author={Ekeland, Ivar},
  journal={Journal of Mathematical Analysis and Applications},
  volume={47},
  number={2},
  pages={324--353},
  year={1974},
  publisher={Elsevier},
  url={https://doi.org/10.1016/0022-247X(74)90025-0}
}

@article{rockafellar1976monotone,
title={Monotone operators and the proximal point algorithm},
author={Rockafellar, R. Tyrrell},
journal={SIAM Journal on Control and Optimization},
volume={14},
number={5},
pages={877--898},
year={1976},
url={https://doi.org/10.1137/0314056}
}

@article{martinet1970regularisation,
title={R{'e}gularisation d'in{'e}quations variationnelles par approximations successives},
author={Martinet, B.},
journal={Revue fran{\c c}aise d'informatique et de recherche op{'e}rationnelle},
volume={4},
number={R3},
pages={154--158},
year={1970},
url={https://doi.org/10.1051/m2an/197004R301541}
}

@article{attouch2013convergence,
  title={Convergence of descent methods for semi-algebraic and tame problems: proximal algorithms, forward--backward splitting, and regularized Gauss--Seidel methods},
  author={Attouch, Hedy and Bolte, J{\'e}r{\^o}me and Svaiter, Benar Fux},
  journal={Mathematical programming},
  volume={137},
  number={1},
  pages={91--129},
  year={2013},
  publisher={Springer},
  url={https://doi.org/10.1007/s10107-011-0484-9}
}

@article{bolte2014proximal,
  title={Proximal alternating linearized minimization for nonconvex and nonsmooth problems},
  author={Bolte, J{\'e}r{\^o}me and Sabach, Shoham and Teboulle, Marc},
  journal={Mathematical Programming},
  volume={146},
  number={1},
  pages={459--494},
  year={2014},
  publisher={Springer},
  url={https://doi.org/10.1007/s10107-013-0701-9}
}

@article{themelis2018forward,
title={Forward--Backward Envelope for the Sum of Two Nonconvex Functions: Further Properties and Nonmonotone Line-Search Algorithms},
author={Themelis, Andreas and Stella, Lorenzo and Patrinos, Panagiotis},
journal={SIAM Journal on Optimization},
volume={28},
number={3},
pages={2274--2303},
year={2018},
url={https://doi.org/10.1137/16M1080240}
}

@article{drusvyatskiy2017proximal,
title={The proximal point method revisited},
author={Drusvyatskiy, Dmitriy},
journal={arXiv preprint arXiv:1712.06038},
year={2017},
url={https://arxiv.org/abs/1712.06038}
}

@article{lee2014proximal,
author    = {Lee, Jason D. and Sun, Yifan and Saunders, Michael A.},
title     = {Proximal Newton-Type Methods for Minimizing Composite Functions},
journal   = {SIAM Journal on Optimization},
volume    = {24},
number    = {3},
pages     = {1420--1441},
year      = {2014},
doi       = {10.1137/130921428},
url       = {https://doi.org/10.1137/130921428}
}

@article{robinson1980strongly,
author    = {Robinson, Stephen M.},
title     = {Strongly Regular Generalized Equations},
journal   = {Mathematics of Operations Research},
volume    = {5},
number    = {1},
pages     = {43--62},
year      = {1980},
doi       = {10.1287/moor.5.1.43},
url       = {https://doi.org/10.1287/moor.5.1.43}
}

@article{chambolle2011pdhg,
  author  = {Chambolle, Antonin and Pock, Thomas},
  title   = {A First-Order Primal-Dual Algorithm for Convex Problems with Applications to Imaging},
  journal = {Journal of Mathematical Imaging and Vision},
  year    = {2011},
  volume  = {40},
  number  = {1},
  pages   = {120--145},
  doi     = {10.1007/s10851-010-0251-1},
  url     = {https://doi.org/10.1007/s10851-010-0251-1}
}

@article{boyd2011admm,
  author  = {Boyd, Stephen and Parikh, Neal and Chu, Eric and Peleato, Borja and Eckstein, Jonathan},
  title   = {Distributed Optimization and Statistical Learning via the Alternating Direction Method of Multipliers},
  journal = {Foundations and Trends in Machine Learning},
  year    = {2011},
  volume  = {3},
  number  = {1},
  pages   = {1--122},
  doi     = {10.1561/2200000016},
  url     = {https://doi.org/10.1561/2200000016}
}

@inproceedings{karmarkar1984new,
  author    = {Karmarkar, Narendra},
  title     = {A New Polynomial-Time Algorithm for Linear Programming},
  booktitle = {Proceedings of the Sixteenth Annual ACM Symposium on Theory of Computing (STOC)},
  year      = {1984},
  pages     = {302--311},
  doi       = {10.1145/800057.808695},
  url       = {https://doi.org/10.1145/800057.808695}
}

@book{nesterov1994interior,
  title={Interior-point polynomial algorithms in convex programming},
  author={Nesterov, Yurii and Nemirovskii, Arkadii},
  year={1994},
  publisher={SIAM},
  url={https://doi.org/10.1137/1.9781611970791},
}

@article{mehrotra1992implementation,
  title={On the implementation of a primal-dual interior point method},
  author={Mehrotra, Sanjay},
  journal={SIAM Journal on optimization},
  volume={2},
  number={4},
  pages={575--601},
  year={1992},
  publisher={SIAM},
  url={https://doi.org/10.1137/0802028}
}

@book{boyd2004convex,
  title={Convex optimization},
  author={Boyd, Stephen and Vandenberghe, Lieven},
  year={2004},
  publisher={Cambridge university press},
  url={https://doi.org/10.1017/CBO9780511804441}
}

@article{bolte2017error,
  title={From error bounds to the complexity of first-order descent methods for convex functions},
  author={Bolte, J{\'e}r{\^o}me and Nguyen, Trong Phong and Peypouquet, Juan and Suter, Bruce W},
  journal={Mathematical Programming},
  volume={165},
  number={2},
  pages={471--507},
  year={2017},
  publisher={Springer},
  url={https://doi.org/10.1007/s10107-016-1091-6}
}

@book{bjorck1996numerical,
  title={Numerical Methods for Least Squares Problems},
  author={Bj{\"o}rck, {\AA}ke},
  year={1996},
  publisher={SIAM},
  url={https://doi.org/10.1137/1.9781611971484}
}

@software{agarwal2012ceres,
  author = {Agarwal, Sameer and Mierle, Keir and The Ceres Solver Team},
  title = {{Ceres Solver}},
  license = {Apache-2.0},
  url = {https://github.com/ceres-solver/ceres-solver},
  version = {2.2},
  year = {2023},
  month = {10}
}

@incollection{more1978levenberg,
  title       = {The {L}evenberg--{M}arquardt Algorithm: Implementation and Theory},
  author      = {Mor{\'e}, Jorge J.},
  booktitle = {Numerical Analysis},
  editor      = {Watson, G. A.},
  series      = {Lecture Notes in Mathematics},
  volume      = {630},
  pages       = {105--116},
  year        = {1978},
  publisher = {Springer},
  url={https://doi.org/10.1007/BFb0067700}
}

@incollection{triggs2000bundle,
  title       = {Bundle Adjustment---A Modern Synthesis},
  author      = {Triggs, Bill and McLauchlan, Philip F. and Hartley, Richard I. 
                 and Fitzgibbon, Andrew W.},
  booktitle = {Vision Algorithms: Theory and Practice},
  series      = {Lecture Notes in Computer Science},
  volume      = {1883},
  pages       = {298--372},
  year        = {2000},
  publisher = {Springer},
  url={https://doi.org/10.1007/3-540-44480-7_21}
}

@article{eckstein1992douglas,
  title     = {On the {D}ouglas--{R}achford Splitting Method and the Proximal Point 
               Algorithm for Maximal Monotone Operators},
  author    = {Eckstein, Jonathan and Bertsekas, Dimitri P.},
  journal = {Mathematical Programming},
  volume    = {55},
  number    = {1--3},
  pages     = {293--318},
  year      = {1992},
  url={https://doi.org/10.1007/BF01581204}
}

@incollection{combettes2011proximal,
  title       = {Proximal Splitting Methods in Signal Processing},
  author      = {Combettes, Patrick L. and Pesquet, Jean-Christophe},
  booktitle = {Fixed-Point Algorithms for Inverse Problems in Science and Engineering},
  pages       = {185--212},
  year        = {2011},
  publisher = {Springer},
  url={https://doi.org/10.1007/978-1-4419-9569-8_10}
}

@article{tibshirani1996regression,
  title     = {Regression Shrinkage and Selection via the Lasso},
  author    = {Tibshirani, Robert},
  journal = {Journal of the Royal Statistical Society: Series B},
  volume    = {58},
  number    = {1},
  pages     = {267--288},
  year      = {1996},
  publisher={Oxford University Press},
  url={http://www.jstor.org/stable/2346178}
}

@article{daubechies2004iterative,
  title     = {An Iterative Thresholding Algorithm for Linear Inverse Problems with a 
               Sparsity Constraint},
  author    = {Daubechies, Ingrid and Defrise, Michel and De Mol, Christine},
  journal = {Communications on Pure and Applied Mathematics},
  volume    = {57},
  number    = {11},
  pages     = {1413--1457},
  year      = {2004},
  publisher={Wiley Online Library},
  url={https://doi.org/10.1002/cpa.20042}
}

@article{chambolle2004algorithm,
  title     = {An Algorithm for Total Variation Minimization and Applications},
  author    = {Chambolle, Antonin},
  journal = {Journal of Mathematical Imaging and Vision},
  volume    = {20},
  number    = {1--2},
  pages     = {89--97},
  year      = {2004},
  publisher={Springer},
  url={https://doi.org/10.1023/B:JMIV.0000011325.36760.1e}
}

@article{duchi2019solving,
  title={Solving (most) of a set of quadratic equalities: Composite optimization for robust phase retrieval},
  author={Duchi, John C and Ruan, Feng},
  journal={Information and Inference: A Journal of the IMA},
  volume={8},
  number={3},
  pages={471--529},
  year={2019},
  publisher={Oxford University Press},
  url={https://doi.org/10.1093/imaiai/iay015}
}

@article{charisopoulos2019composite,
  title={Composite optimization for robust blind deconvolution},
  author={Charisopoulos, Vasileios and Davis, Damek and D{\'\i}az, Mat{\'\i}as and Drusvyatskiy, Dmitriy},
  journal={arXiv preprint arXiv:1901.01624},
  year={2019},
  url={https://arxiv.org/abs/1901.01624}
}

@article{kamath2025scvxgen,
  title={{SCvxGEN}: Successive Convexification with Automatic Custom Code Generation for Fast and Embedded General-Purpose Trajectory Optimization},
  author={Kamath, Abhinav G. and Uzun, Samet and Chari, Govind M. and A{\c{c}}{\i}kme{\c{s}}e, Beh{\c{c}}et and Elango, Purnanand and Szmuk, Michael},
  year={2025},
}

@article{elango2025continuous,
  title={Continuous-time successive convexification for constrained trajectory optimization},
  author={Elango, Purnanand and Luo, Dayou and Kamath, Abhinav G and Uzun, Samet and Kim, Taewan and A{\c{c}}{\i}kme{\c{s}}e, Beh{\c{c}}et},
  journal={Automatica},
  volume={180},
  pages={112464},
  year={2025},
  publisher={Elsevier},
  url={https://doi.org/10.1016/j.automatica.2025.112464}
}

@article{uzun2024optimization,
  title={Optimization with temporal and logical specifications via generalized mean-based smooth robustness measures},
  author={Uzun, Samet and Elango, Purnanand and Garoche, Pierre-Loic and A{\c{c}}{\i}kme{\c{s}}e, Beh{\c{c}}et},
  journal={arXiv preprint arXiv:2405.10996},
  year={2024},
  url={https://arxiv.org/abs/2405.10996}
}

@inproceedings{uzun2025sequential,
  title={Sequential convex programming for 6-DoF powered descent guidance with continuous-time compound state-triggered constraints},
  author={Uzun, Samet and A{\c{c}}{\i}kme{\c{s}}e, Beh{\c{c}}et and Carson, John M},
  booktitle={AIAA SCITECH 2025 Forum},
  pages={1895},
  year={2025},
  url={https://doi.org/10.2514/6.2025-1895}
}

@article{uzun2025motion,
  title={Motion Planning for Information Acquisition via Continuous-Time Successive Convexification},
  author={Uzun, Samet and A{\c{c}}{\i}kme{\c{s}}e, Beh{\c{c}}et and Di Cairano, Stefano},
  journal={IEEE Control Systems Letters},
  year={2025},
  publisher={IEEE},
  url={https://doi.org/10.1109/LCSYS.2025.3576066},
}

@article{uzun2024successive,
  title={Successive convexification for nonlinear model predictive control with continuous-time constraint satisfaction},
  author={Uzun, Samet and Elango, Purnanand and Kamath, Abhinav G and Kim, Taewan and A{\c{c}}{\i}kme{\c{s}}e, Beh{\c{c}}et},
  journal={IFAC-PapersOnLine},
  volume={58},
  number={18},
  pages={421--429},
  year={2024},
  publisher={Elsevier},
  url={https://doi.org/10.1016/j.ifacol.2024.09.062},
}

@article{ochs2015iteratively,
  title={On iteratively reweighted algorithms for nonsmooth nonconvex optimization in computer vision},
  author={Ochs, Peter and Dosovitskiy, Alexey and Brox, Thomas and Pock, Thomas},
  journal={SIAM Journal on Imaging Sciences},
  volume={8},
  number={1},
  pages={331--372},
  year={2015},
  publisher={SIAM},
  url={https://doi.org/10.1137/140971518}
}

@article{ochs2019nonsmooth,
    author = {Ochs, Peter and Fadili, Jalal and Brox, Thomas},
    title = {Non-smooth Non-convex Bregman Minimization: Unification and New Algorithms},
    year = {2019},
    issue_date = {Apr 2019},
    publisher = {Plenum Press},
    address = {USA},
    volume = {181},
    number = {1},
    issn = {0022-3239},
    url = {https://doi.org/10.1007/s10957-018-01452-0},
    doi = {10.1007/s10957-018-01452-0},
    journal = {J. Optim. Theory Appl.},
    month = apr,
    pages = {244–278},
    numpages = {35},
    keywords = {Abstract algorithm, Non-convex non-smooth, Growth function, Model function, Legendre function, Bregman minimization}
}

@inproceedings{donze2010robust,
  title={Robust satisfaction of temporal logic over real-valued signals},
  author={Donz{\'e}, Alexandre and Maler, Oded},
  booktitle={International conference on formal modeling and analysis of timed systems},
  pages={92--106},
  year={2010},
  organization={Springer},
  url={https://doi.org/10.1007/978-3-642-15297-9_9}
}

% \newpage

\section{Appendix}
\subsection{Model errors}
In composite problems, the effective level of linearization hinges on the roles of the outer and inner maps. If the outer map is smooth and the inner components are convex, linearizing the \emph{outer} layer preserves convexity; if the outer is convex and the inner is smooth but nonconvex, linearizing the \emph{inner} layer is natural. When \emph{both} $\sout$ and the $\cin_i$ are convex and $\mathcal{C}^1$–smooth, the choice is less obvious. We therefore compare three models: (i) \emph{full} linearization $F^{\rm all}$, (ii) \emph{inner-only} linearization $F^{\rm in}$, and (iii) \emph{outer-only} linearization $F^{\rm out}$, and derive clean error bounds that reveal the dominant constants.

To compare the three linearization levels on equal footing, we strengthen Assumption~\ref{asm:TA} for this subsection as follows: in addition to the original standing assumptions, we assume $\|\nabla\sout(y)\|\le L_{\sout}$ there, and that every $\cin_i$ is $\mathcal C^1$ with $\beta_{\cin_i}$–Lipschitz gradient. We then write
$\beta_{\Cin}:=\big(\sum_{i=1}^n \beta_{\cin_i}^2\big)^{1/2}$.

\begin{proposition}[Modeling error bounds]\label{prop:errors}
Fix $x_k$, write $d_x:=x-x_k$, $\Cin_k:=\Cin(x_k)$, $J_k:=J_{\Cin}(x_k)$, and $\Delta_{\Cin}:=\Cin(x)-\Cin_k$.
Define
\[
\begin{aligned}
F^{\rm all}(x;x_k)&:=\sout(\Cin_k)+\nabla\sout(\Cin_k)^\top(J_k d_x),\\
F^{\rm in}(x;x_k)&:=\sout(\Cin_k+J_k d_x),\\
F^{\rm out}(x;x_k)&:=\sout(\Cin_k)+\nabla\sout(\Cin_k)^\top(\Delta_{\Cin}),
\end{aligned}
\qquad
E^{(\cdot)}(x;x_k):=\sout(\Cin(x))-F^{(\cdot)}(x;x_k).
\]
Under the assumptions above,
\[
|E^{\rm all}(x;x_k)| \le \Big(\tfrac{\beta_{\sout}L_{\Cin}^2}{2}+\tfrac{L_{\sout}\beta_{\Cin}}{2}\Big)\|d_x\|^2,\quad
|E^{\rm in}(x;x_k)| \le \tfrac{L_{\sout}\beta_{\Cin}}{2}\|d_x\|^2,\quad
|E^{\rm out}(x;x_k)| \le \tfrac{\beta_{\sout}L_{\Cin}^2}{2}\|d_x\|^2.
\]
\end{proposition}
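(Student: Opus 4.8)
The plan is to reduce all three bounds to a single auxiliary estimate followed by two standard applications of the descent lemma. First I would control the \emph{inner linearization residual} $\Delta_{\Cin}-J_k d_x$, whose $i$-th coordinate is exactly $\cin_i(x)-\cin_i(x_k)-\nabla\cin_i(x_k)^\top d_x$. Since each $\cin_i$ is $\mathcal C^1$ with $\beta_{\cin_i}$-Lipschitz gradient, the quadratic Taylor bound gives $|\cin_i(x)-\cin_i(x_k)-\nabla\cin_i(x_k)^\top d_x|\le \tfrac{\beta_{\cin_i}}{2}\|d_x\|^2$ coordinatewise; taking the Euclidean norm over $i$ collapses the per-coordinate constants into $\beta_{\Cin}=(\sum_i\beta_{\cin_i}^2)^{1/2}$, yielding
\[
\|\Delta_{\Cin}-J_k d_x\|\ \le\ \tfrac{\beta_{\Cin}}{2}\,\|d_x\|^2.
\]
This is the one estimate that drives both the inner-only and the full bounds.

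For the \textbf{outer-only} model I would write $\Cin(x)=\Cin_k+\Delta_{\Cin}$ and recognize $E^{\rm out}$ as the first-order Taylor residual of $\sout$ between $\Cin_k$ and $\Cin_k+\Delta_{\Cin}$; the descent lemma for the $\beta_{\sout}$-Lipschitz-gradient map $\sout$ then gives $|E^{\rm out}|\le\tfrac{\beta_{\sout}}{2}\|\Delta_{\Cin}\|^2$, and Lipschitzness of $\Cin$ (so $\|\Delta_{\Cin}\|\le L_{\Cin}\|d_x\|$) produces the stated $\tfrac{\beta_{\sout}L_{\Cin}^2}{2}\|d_x\|^2$. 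For the \textbf{inner-only} model, both arguments $\Cin(x)$ and $\Cin_k+J_k d_x$ enter the \emph{exact} outer map; since $\|\nabla\sout\|\le L_{\sout}$ makes $\sout$ globally $L_{\sout}$-Lipschitz, $|E^{\rm in}|\le L_{\sout}\|\Delta_{\Cin}-J_k d_x\|$, and the auxiliary estimate closes the bound as $\tfrac{L_{\sout}\beta_{\Cin}}{2}\|d_x\|^2$.

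For the \textbf{full} model I would add and subtract $F^{\rm out}$ to obtain the clean split
\[
E^{\rm all}=E^{\rm out}+\nabla\sout(\Cin_k)^\top(\Delta_{\Cin}-J_k d_x),
\]
bounding the first term by the outer-only result and the second by Cauchy--Schwarz together with $\|\nabla\sout(\Cin_k)\|\le L_{\sout}$ and the auxiliary estimate; their sum is exactly $\big(\tfrac{\beta_{\sout}L_{\Cin}^2}{2}+\tfrac{L_{\sout}\beta_{\Cin}}{2}\big)\|d_x\|^2$. (Equivalently one may split through $F^{\rm in}$, which costs an extra use of $\|J_k\|\le L_{\Cin}$.) The computations are routine; the only point requiring care is the $\ell_2$ stacking that yields $\beta_{\Cin}=(\sum_i\beta_{\cin_i}^2)^{1/2}$ rather than a cruder sum of the $\beta_{\cin_i}$ --- this is precisely what the Euclidean norm of the coordinatewise residual vector delivers, and it is what keeps the constants tight and consistent with Lemma~\ref{lem:model_two_sided}.
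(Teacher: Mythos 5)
Your proof is correct and follows essentially the same route as the paper: the same auxiliary estimate $\|\Delta_{\Cin}-J_k d_x\|\le\tfrac{\beta_{\Cin}}{2}\|d_x\|^2$ obtained by coordinatewise Taylor bounds and $\ell_2$ stacking, the same descent-lemma and Lipschitz/gradient-bound estimates for $\sout$, and identical treatments of $E^{\rm in}$ and $E^{\rm out}$. The only difference is cosmetic: for $E^{\rm all}$ you split through $F^{\rm out}$ and bound the leftover gradient term $\nabla\sout(\Cin_k)^\top(\Delta_{\Cin}-J_k d_x)$ by Cauchy--Schwarz, whereas the paper splits through $F^{\rm in}$ (adding and subtracting $\sout(\Cin_k+J_k d_x)$) and applies the descent lemma along $J_k d_x$; both splits use the same two ingredients and yield exactly the same constants.
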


\begin{proof}
Let the inner Taylor remainder be
\[
e_{\Cin}(x):=\Cin(x)-\big(\Cin_k+J_k d_x\big).
\]
By component-wise $\beta_{\cin_i}$–smoothness and stacking,
\begin{equation}\label{eq:eR}
\|e_{\Cin}(x)\|\le \frac{\beta_{\Cin}}{2}\|d_x\|^2.
\end{equation}
Lipschitzness of $\Cin$ yields
\begin{equation}\label{eq:LCin}
\|\Delta_{\Cin}\|\le L_{\Cin}\|d_x\|,\qquad \|J_k d_x\|\le L_{\Cin}\|d_x\|.
\end{equation}
For $\sout$ we use the descent lemma and a gradient-bound shift:
\begin{equation}\label{eq:smooth+shift}
\big|\sout(u)-\sout(v)-\nabla\sout(v)^\top(u-v)\big|\le \tfrac{\beta_{\sout}}{2}\|u-v\|^2,\quad
\big|\sout(u)-\sout(v)\big|\le L_{\sout}\|u-v\|.
\end{equation}

\emph{All:}
\[
\begin{aligned}
|E^{\rm all}|
&= \big|\sout(\Cin_k+\Delta_{\Cin})-\sout(\Cin_k)-\nabla\sout(\Cin_k)^\top(J_k d_x)\big|\\
&\le \underbrace{\big|\sout(\Cin_k+\Delta_{\Cin})-\sout(\Cin_k+J_k d_x)\big|}_{\le L_{\sout}\|e_{\Cin}(x)\|}
+ \underbrace{\big|\sout(\Cin_k+J_k d_x)-\sout(\Cin_k)-\nabla\sout(\Cin_k)^\top(J_k d_x)\big|}_{\le \frac{\beta_{\sout}}{2}\|J_k d_x\|^2}\\
&\le \tfrac{L_{\sout}\beta_{\Cin}}{2}\|d_x\|^2+\tfrac{\beta_{\sout}L_{\Cin}^2}{2}\|d_x\|^2,
\end{aligned}
\]
by \eqref{eq:eR}, \eqref{eq:LCin}, \eqref{eq:smooth+shift}.

\emph{In:}
\[
|E^{\rm in}| = \big|\sout(\Cin_k+\Delta_{\Cin})-\sout(\Cin_k+J_k d_x)\big|
\le L_{\sout}\|e_{\Cin}(x)\|
\le \tfrac{L_{\sout}\beta_{\Cin}}{2}\|d_x\|^2.
\]

\emph{Out:}
\[
|E^{\rm out}| = \big|\sout(\Cin_k+\Delta_{\Cin})-\sout(\Cin_k)-\nabla\sout(\Cin_k)^\top\Delta_{\Cin}\big|
\le \tfrac{\beta_{\sout}}{2}\|\Delta_{\Cin}\|^2
\le \tfrac{\beta_{\sout}L_{\Cin}^2}{2}\|d_x\|^2.
\]
\end{proof}

The outer-only model is favored when $\beta_{\sout}L_{\Cin}^2\ll L_{\sout}\beta_{\Cin}$ (strong inner curvature but gently curved outer map), while the inner-only model is preferable in the opposite regime; the full linearization combines both penalties and is typically dominated. In our algorithm, prox-convex couples naturally with an \emph{adaptive proximal} metric and, when tractable, \emph{second-order curvature blocks} from $\mathcal{C}^2$ components, further tightening local models and improving the observed rate.

\subsection{Bounds for Hessian-Augmented Models}\label{app:hess-bounds}

In this subsection, in addition to Assumption~\ref{asm:TA}, we assume the following higher-order smoothness conditions to provide bounds for Hessian-augmented models supporting Section~\ref{subsec:hessian_construction}. 

\begin{assumption}[Additional smoothness assumptions] \label{asm:H_smth}
In the level set $\mathcal{X}_0$, we assume:
\begin{itemize}
    \item $\cout : \mathbb{R}^d \to \mathbb{R}$ is $L_{\cout}$–Lipschitz and $\mathcal{C}^1$ with $\beta_{\cout}$–Lipschitz Jacobian;
    \item $\sinn : \mathbb{R}^m \to \mathbb{R}^d$ is $L_{\sinn}$–Lipschitz and $\mathcal{C}^2$ with $\|\nabla^2\sinn_i(u)-\nabla^2\sinn_i(v)\|\le \gamma_{\sinn}\|u-v\|$;
    \item $\sout : \mathbb{R}^n \to \mathbb{R}$ is $\mathcal{C}^2$ with $\|\nabla^2\sout(u)-\nabla^2\sout(v)\|\le \gamma_{\sout}\|u-v\|$;
    \item $\Cin : \mathbb{R}^m \to \mathbb{R}^n$ is $\mathcal{C}^1$ with $\beta_{\Cin}$–Lipschitz Jacobian 
\end{itemize}
\end{assumption}

\begin{proposition}[Inner curvature cancellation]\label{prop:inner_block}
Let $J_k:=J_{\sinn}(x_k)$ and $y:=\nabla\cout(\sinn(x_k))$.
Define
\[
H_{\sinn,k}:=\sum_{i=1}^d y_i\,\nabla^2\sinn_i(x_k),
\]
and the model
\[
F^{\sinn}_{ H_{\sinn,k}}(x;x_k)\ :=\ \cout\big(\sinn(x_k)+J_k d\big)\;+\;\tfrac12\,d^\top H_{\sinn,k} d,
\qquad d:=x-x_k.
\]
Then, for $x$ near $x_k$,
\begin{align*}
|\cout(\sinn(x)) - F^{\sinn}_{ H_{\sinn,k} }(x;x_k)|
&\le M_{\sinn}^3\,\|d\|^3
\;+\;M_{\sinn}^4\,\|d\|^4,
\end{align*}
where $M_{\sinn}^3=\tfrac{L_{\cout}\gamma_{\sinn}}{6}+\tfrac{\beta_{\cout}L_{\sinn}\beta_{\sinn}}{2}$ and
$M_{\sinn}^4=\tfrac{\beta_{\cout}\beta_{\sinn}^2}{8} + \tfrac{\beta_{\cout}L_{\sinn}\gamma_{\sinn}}{6}$.
\end{proposition}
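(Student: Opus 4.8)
The plan is to recognize that the model $F^{\sinn}_{H_{\sinn,k}}(\cdot;x_k)$ is precisely a second-order Taylor surrogate of $\cout\circ\sinn$ at $x_k$: keeping $\cout$ exact on the linearized argument $b:=\sinn(x_k)+J_k d$ already reproduces the curvature block $J_k^\top\nabla^2\cout(\sinn(x_k))J_k$, while the added quadratic $\tfrac12 d^\top H_{\sinn,k}d$ supplies the remaining inner-curvature contribution $\sum_i y_i\nabla^2\sinn_i(x_k)$ with $y:=\nabla\cout(\sinn(x_k))$. Consequently the error is genuinely third order, and the task reduces to tracking constants. First I would fix notation: write $e:=\sinn(x)-b$ for the inner linearization remainder, let $v\in\mathbb{R}^d$ be the vector with entries $v_i:=d^\top\nabla^2\sinn_i(x_k)d$, and let $r:=e-\tfrac12 v$ be the second-order Taylor remainder of $\sinn$. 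From Assumption~\ref{asm:TA} and Assumption~\ref{asm:H_smth} I record the dimension-free estimates $\|e\|\le\tfrac{\beta_{\sinn}}{2}\|d\|^2$, $\|v\|\le\beta_{\sinn}\|d\|^2$, $\|r\|\le\tfrac{\gamma_{\sinn}}{6}\|d\|^3$, together with $\|J_k d\|\le L_{\sinn}\|d\|$, $\|\nabla\cout(\cdot)\|\le L_{\cout}$, and $\|\nabla\cout(b)-y\|\le\beta_{\cout}\|J_k d\|\le\beta_{\cout}L_{\sinn}\|d\|$.

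Next I would split the total error $E:=\cout(\sinn(x))-\cout(b)-\tfrac12 d^\top H_{\sinn,k}d$ as $E=E_0+E_1$, where $E_0:=\cout(\sinn(x))-\cout(b)-\nabla\cout(b)^\top e$ is the first-order expansion error of $\cout$ about $b$, and $E_1:=\nabla\cout(b)^\top e-\tfrac12 d^\top H_{\sinn,k}d$. The $\beta_{\cout}$-Lipschitz gradient of $\cout$ bounds $|E_0|\le\tfrac{\beta_{\cout}}{2}\|e\|^2\le\tfrac{\beta_{\cout}\beta_{\sinn}^2}{8}\|d\|^4$, the first summand of $M_{\sinn}^4$. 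For $E_1$ I substitute $e=\tfrac12 v+r$ and use $\tfrac12 d^\top H_{\sinn,k}d=\tfrac12 y^\top v$ to obtain
\[
E_1=\tfrac12(\nabla\cout(b)-y)^\top v+\nabla\cout(b)^\top r,
\]
and then split the last term once more as $\nabla\cout(b)^\top r=y^\top r+(\nabla\cout(b)-y)^\top r$.

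Finally I would bound the three resulting pieces by Cauchy--Schwarz and the recorded estimates: $\tfrac12(\nabla\cout(b)-y)^\top v$ gives $\tfrac{\beta_{\cout}L_{\sinn}\beta_{\sinn}}{2}\|d\|^3$, $y^\top r$ gives $\tfrac{L_{\cout}\gamma_{\sinn}}{6}\|d\|^3$, and $(\nabla\cout(b)-y)^\top r$ gives $\tfrac{\beta_{\cout}L_{\sinn}\gamma_{\sinn}}{6}\|d\|^4$. Collecting the two cubic terms yields $M_{\sinn}^3$ and the two quartic terms yield $M_{\sinn}^4$, which is the claim. The main obstacle is twofold. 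First, obtaining the dimension-free vector bounds on $v$ and $r$ requires reading the Hessian-Lipschitz hypothesis as a bound on the full bilinear second-derivative tensor of $\sinn$ (so that $\|v\|=\|D^2\sinn(x_k)[d,d]\|\le\beta_{\sinn}\|d\|^2$ and $\|r\|\le\tfrac{\gamma_{\sinn}}{6}\|d\|^3$ without a spurious $\sqrt{d}$ factor), rather than the naive per-component reading. Second, the three-way split of $\nabla\cout(b)^\top r$ must be done carefully: it is exactly the separation of $y^\top r$ (third order) from $(\nabla\cout(b)-y)^\top r$ (fourth order) that produces the constants $\tfrac{L_{\cout}\gamma_{\sinn}}{6}$ in $M_{\sinn}^3$ and $\tfrac{\beta_{\cout}L_{\sinn}\gamma_{\sinn}}{6}$ in $M_{\sinn}^4$, so conflating the two would misplace the $\gamma_{\sinn}$ contribution between the cubic and quartic orders.
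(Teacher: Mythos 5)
Your proof is correct and follows essentially the same route as the paper's: second-order Taylor expansion of $\sinn$, adding and subtracting $y=\nabla\cout(\sinn(x_k))$ to isolate the curvature cancellation $\langle y, v\rangle = d^\top H_{\sinn,k}d$, and Cauchy--Schwarz on the resulting three cubic/quartic pieces. Your treatment of the quadratic-in-$e$ term is in fact marginally cleaner than the paper's: by bounding $\|e\|\le\tfrac{\beta_{\sinn}}{2}\|d\|^2$ directly via the Lipschitz-Jacobian descent lemma (rather than through $\|e\|\le\tfrac12\|v\|+\|r\|$), you obtain exactly $\tfrac{\beta_{\cout}\beta_{\sinn}^2}{8}\|d\|^4$ without the $O(\|d\|^5)$ residue the paper absorbs, and without invoking a mean-value Hessian of $\cout$, which Assumption~\ref{asm:H_smth} does not strictly provide.
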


\begin{proof}
Let $q(d)\in\R^d$ with $[q(d)]_i=\langle \nabla^2\sinn_i(x_k)d,d\rangle$ and let $r_3$ be the cubic remainder in
\[
\sinn(x_k+d)=\sinn(x_k)+J_k d+\tfrac12\,q(d)+r_3,\qquad
\|q(d)\|\le \beta_{\sinn}\|d\|^2,\quad
\|r_3\|\le \tfrac{\gamma_{\sinn}}{6}\|d\|^3.
\]
Let $z:=\sinn(x_k)+J_k d$ and $\delta:=\tfrac12 q(d)+r_3$, so $\sinn(x)=z+\delta$.
By Taylor’s theorem with $\beta_{\cout}$–Lipschitz gradient,
\[
\cout(z+\delta)=\cout(z)+\langle \nabla\cout(z),\delta\rangle
+\tfrac12\,\delta^\top\big(\nabla^2\cout(z+\tau\delta)\big)\delta,
\quad \big\|\nabla^2\cout(z+\tau\delta)\big\|\le \beta_{\cout}.
\]
Add and subtract $\nabla\cout(\sinn(x_k))$:
\[
\cout(z+\delta)-\cout(z)
= \underbrace{\langle \nabla\cout(\sinn(x_k)),\delta\rangle}_{=\frac12\langle y,q(d)\rangle+\langle y,r_3\rangle}
+ \underbrace{\langle \nabla\cout(z)-\nabla\cout(\sinn(x_k)),\delta\rangle}_{\mathrm{I}}
+ \underbrace{\tfrac12\,\delta^\top\nabla^2\cout(\cdot)\,\delta}_{\mathrm{II}}.
\]
Since $\langle y,q(d)\rangle=d^\top H_{\sinn,k} d$,
\[
\cout(\sinn(x)) - F^{\sinn}_{ H_{\sinn,k} }(x;x_k)
= \langle y,r_3\rangle + \mathrm{I} + \mathrm{II}.
\]

Bounds:
$\|y\|=\|\nabla\cout(\sinn(x_k))\|\le L_{\cout}$ gives
$|\langle y,r_3\rangle|\le (L_{\cout}\gamma_{\sinn}/6)\|d\|^3$.
Moreover,
$\|\nabla\cout(z)-\nabla\cout(\sinn(x_k))\|\le \beta_{\cout}\|z-\sinn(x_k)\|
=\beta_{\cout}\|J_k d\| \le \beta_{\cout}L_{\sinn}\|d\|$ and
$\|\delta\|\le \tfrac12\|q(d)\|+\|r_3\|
\le \tfrac12\beta_{\sinn}\|d\|^2 + \tfrac{\gamma_{\sinn}}{6}\|d\|^3$, hence
\[
|\mathrm{I}|\le \tfrac{\beta_{\cout}L_{\sinn}\beta_{\sinn}}{2}\|d\|^3
+ \tfrac{\beta_{\cout}L_{\sinn}\gamma_{\sinn}}{6}\|d\|^4.
\]
Finally, $|\mathrm{II}|\le \tfrac12\beta_{\cout}\|\delta\|^2
\le \tfrac{\beta_{\cout}\beta_{\sinn}^2}{8}\|d\|^4+O(\|d\|^5)$.
Collecting the cubic and quartic terms and absorbing higher-order terms gives the stated bound.
\end{proof}

\begin{proposition}[Outer curvature cancellation]\label{prop:outer_block}
Fix $x_k$ and write $d:=x-x_k$, $\Delta:=\Cin(x)-\Cin(x_k)$, $J_k:=J_{\Cin}(x_k)$,
and $e_{\Cin}(x):=\Cin(x)-\Cin(x_k)-J_k d$.
Set
\[
\tilde H_{\sout,k}:=\nabla^2\sout(\Cin(x_k)),\quad
H_{\sout,k}:=J_k^\top \tilde H_{\sout,k} J_k,
\]
and define
\[
F^{\sout}_{H_{\sout,k}}(x;x_k)
:=\sout(\Cin(x_k))+\nabla\sout(\Cin(x_k))^\top\big(\Cin(x)-\Cin(x_k)\big)
+\tfrac12\, 
d^\top H_{\sout,k} d.
\]
Then, for $x$ near $x_k$,
\begin{align*}
| \sout(\Cin(x)) - F^{\sout}_{H_{\sout,k}}(x;x_k) |
&\le M_{\sout}^3\|d\|^3
\;+\;M_{\sout}^4\|d\|^4,
\end{align*}
where $M_{\sout}^3 = \tfrac{\beta_{\sout}L_{\Cin}\beta_{\Cin}}{2}+\tfrac{\gamma_{\sout}L_{\Cin}^3}{6}$ and
$M_{\sout}^4 = \tfrac{\beta_{\sout}\beta_{\Cin}^2}{8}$.
\end{proposition}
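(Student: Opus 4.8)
The plan is to split the error $E:=\sout(\Cin(x))-F^{\sout}_{H_{\sout,k}}(x;x_k)$ into two independent sources: the second-order Taylor remainder of $\sout$ taken along the \emph{true} increment $\Delta=\Cin(x)-\Cin(x_k)$, and the mismatch created by building the curvature block from the \emph{linearized} increment $J_k d$ rather than from $\Delta$ itself. First I would expand $\sout$ at $\Cin(x_k)$ to second order. Since $\sout\in\mathcal C^2$ with $\gamma_{\sout}$-Lipschitz Hessian, Taylor's theorem gives
\[
\sout(\Cin(x_k)+\Delta)=\sout(\Cin(x_k))+\nabla\sout(\Cin(x_k))^\top\Delta+\tfrac12\,\Delta^\top\tilde H_{\sout,k}\,\Delta+r,\qquad |r|\le\tfrac{\gamma_{\sout}}{6}\|\Delta\|^3 .
\]
Using the $L_{\Cin}$-Lipschitz bound $\|\Delta\|\le L_{\Cin}\|d\|$ turns the remainder into $\tfrac{\gamma_{\sout}L_{\Cin}^3}{6}\|d\|^3$, which is exactly the second cubic term of $M_{\sout}^3$. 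Subtracting off the model's linear and quadratic parts leaves precisely the discrepancy between $\tfrac12\Delta^\top\tilde H_{\sout,k}\Delta$ and $\tfrac12 d^\top H_{\sout,k}d=\tfrac12(J_k d)^\top\tilde H_{\sout,k}(J_k d)$.

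The crux is controlling this quadratic-form discrepancy. Writing $\Delta=J_k d+e_{\Cin}(x)$ and setting $u:=J_k d$, $e:=e_{\Cin}(x)$, the (symmetric) expansion
\[
\Delta^\top\tilde H_{\sout,k}\Delta-u^\top\tilde H_{\sout,k}u=2\,u^\top\tilde H_{\sout,k}e+e^\top\tilde H_{\sout,k}e
\]
isolates the entire remaining error as a single cross term plus a pure-$e$ term. I would then bound each factor separately: $\|\tilde H_{\sout,k}\|=\|\nabla^2\sout(\Cin(x_k))\|\le\beta_{\sout}$ (bounded Hessian from the $\beta_{\sout}$-Lipschitz Jacobian of $\sout$), $\|u\|=\|J_k d\|\le L_{\Cin}\|d\|$, and the inner Jacobian-linearization remainder $\|e\|\le\tfrac{\beta_{\Cin}}{2}\|d\|^2$ (from the $\beta_{\Cin}$-Lipschitz Jacobian of $\Cin$). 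These yield $|u^\top\tilde H_{\sout,k}e|\le\tfrac{\beta_{\sout}L_{\Cin}\beta_{\Cin}}{2}\|d\|^3$, matching the remaining cubic term of $M_{\sout}^3$, and $\tfrac12|e^\top\tilde H_{\sout,k}e|\le\tfrac{\beta_{\sout}\beta_{\Cin}^2}{8}\|d\|^4$, which is exactly $M_{\sout}^4$.

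Collecting the three pieces—the cubic Taylor remainder $r$, the cross term, and the pure-$e$ term—and applying the triangle inequality $|E|\le|u^\top\tilde H_{\sout,k}e|+\tfrac12|e^\top\tilde H_{\sout,k}e|+|r|$ gives the stated bound, with no surviving $O(\|d\|^5)$ correction since each estimate is an exact (non-asymptotic) inequality. The main conceptual point, and the only place needing care, is recognizing that the Gauss--Newton-type pullback $H_{\sout,k}=J_k^\top\tilde H_{\sout,k}J_k$ reproduces the dominant quadratic $\tfrac12(J_k d)^\top\tilde H_{\sout,k}(J_k d)$ rather than the full $\tfrac12\Delta^\top\tilde H_{\sout,k}\Delta$; this is precisely why the unmodeled discrepancy is driven entirely by the inner remainder $e_{\Cin}$ and hence enters only at cubic order and above, confirming the claimed local third-order accuracy of the Hessian-augmented model.
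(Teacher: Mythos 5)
Your proposal is correct and follows essentially the same route as the paper's proof: second-order Taylor expansion of $\sout$ at $\Cin(x_k)$ along $\Delta$ with the $\gamma_{\sout}$-Lipschitz Hessian remainder, the decomposition $\Delta = J_k d + e_{\Cin}(x)$ to split the quadratic form into the modeled pullback plus a cross term and a pure-remainder term, and the same three bounds $\|\tilde H_{\sout,k}\|\le\beta_{\sout}$, $\|J_k d\|\le L_{\Cin}\|d\|$, $\|e_{\Cin}(x)\|\le\tfrac{\beta_{\Cin}}{2}\|d\|^2$ yielding exactly the constants $M_{\sout}^3$ and $M_{\sout}^4$. No gaps; this matches the paper's argument step for step.
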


\begin{proof}
Second-order Taylor of $\sout$ at $\Cin(x_k)$ gives
\[
\sout(\Cin(x_k)+\Delta)
=\sout(\Cin(x_k))+\nabla\sout(\Cin(x_k))^\top\Delta
+\tfrac12\,\Delta^\top \tilde H_{\sout,k}\,\Delta + R_3,\quad
|R_3|\le \tfrac{\gamma_{\sout}}{6}\,\|\Delta\|^3.
\]
With $\Delta=J_k d+e$ and $e:=e_{\Cin}(x)$,
\[
\tfrac12\,\Delta^\top \tilde H_{\sout,k}\,\Delta
= \tfrac12\,d^\top H_{\sout,k} d + (J_k d)^\top \tilde H_{\sout,k} e + \tfrac12\,e^\top \tilde H_{\sout,k} e.
\]
Subtracting $F^{\sout}_{H_{\sout,k}}$ yields
\[
\sout(\Cin(x)) - F^{\sout}_{H_{\sout,k}}(x;x_k)
= (J_k d)^\top \tilde H_{\sout,k} e + \tfrac12\,e^\top \tilde H_{\sout,k} e + R_3.
\]
Using $\|\tilde H_{\sout,k}\|\le \beta_{\sout}$ gives
$\pm (J_k d)^\top \tilde H_{\sout,k} e \le \beta_{\sout}\|J_k d\|\,\|e\|$ and
$\pm \tfrac12 e^\top\tilde H_{\sout,k} e \le \tfrac12 \beta_{\sout}\|e\|^2$,
while $|R_3|\le (\gamma_{\sout}/6)\|\Delta\|^3$. Thus
\[
|\sout(\Cin(x)) - F^{\sout}_{H_{\sout,k}}(x;x_k)|
\ \le\ \beta_{\sout}\,\|J_k d\|\,\|e_{\Cin}(x)\|
\;+\;\tfrac12\,\beta_{\sout}\,\|e_{\Cin}(x)\|^2
\;+\;\tfrac{\gamma_{\sout}}{6}\,\|\Delta\|^3.
\]
Substituting the auxiliary estimates $\|J_k d\|\le L_{\Cin}\|d\|$, $\|e_{\Cin}(x)\|\le \tfrac{\beta_{\Cin}}{2}\|d\|^2$, and $\|\Delta\|\le L_{\Cin}\|d\|$ yields the claimed bound.
\end{proof}

\begin{corollary}[Projection after summation]\label{cor:joint-proj}
Adopt Assumption~\ref{asm:H_smth} and the notation of
Propositions~\ref{prop:inner_block} and~\ref{prop:outer_block}.
Let
\[
H_k\ :=\ H_{\sinn,k}+H_{\sout,k},\qquad
H_k^+\ :=\ \Pi_{\mathbb S_+}(H_k),\qquad
H_k^-\ :=\ H_k^+ - H_k\ \succeq 0 .
\]
Define the joint Hessian-augmented model
\begin{align*}
F_{H_k^+}(x;x_k)
\ :=\
\cvx(x) + 
\underbrace{\cout \big(\sinn(x_k)+J_{\sinn}(x_k)(x-x_k)\big)}_{\text{inner first order}}
\ &+ \
\underbrace{\sout(\Cin(x_k))
+\nabla\sout(\Cin(x_k))^\top\!\big(\Cin(x)-\Cin(x_k)\big)}_{\text{outer first order}} \\
\ &+\ \tfrac12\,(x-x_k)^\top H_k^+ (x-x_k).
\end{align*}
Then, for $x$ near $x_k$ and $d:=x-x_k$,
\begin{align*}
\text{\emph{(Upper bound)}}\quad
F(x) - F_{H_k^+}(x;x_k)
&\le \big(M_{\sout}^3+M_{\sinn}^3\big)\,\|d\|^3
\;+\;\big(M_{\sout}^4+M_{\sinn}^4\big)\,\|d\|^4,\\[2mm]
\text{\emph{(Lower bound)}}\quad
F(x) - F_{H_k^+}(x;x_k)
&\ge -\tfrac12\,d^\top H_k^- d
\;-\;\big(M_{\sout}^3+M_{\sinn}^3\big)\,\|d\|^3
\;-\;\big(M_{\sout}^4+M_{\sinn}^4\big)\,\|d\|^4,
\end{align*}
where the block constants $M_{\sout}^3,M_{\sout}^4$ and $M_{\sinn}^3,M_{\sinn}^4$ are those in
Propositions~\ref{prop:inner_block} and~\ref{prop:outer_block}.
Equivalently, using $d^\top H_k^- d \le \|H_k^-\|\,\|d\|^2$, the lower bound can be written with $\|H_k^-\|\,\|d\|^2/2$.
\end{corollary}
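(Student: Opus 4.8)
The plan is to reduce the claim to the two single-composite estimates already proved in Propositions~\ref{prop:inner_block} and~\ref{prop:outer_block}, with the PSD projection entering only through a single nonnegative quadratic term. First I would introduce the \emph{unprojected} joint model $F_{H_k}(x;x_k)$, obtained by replacing $H_k^+$ with $H_k=H_{\sinn,k}+H_{\sout,k}$ in the definition of $F_{H_k^+}$. Since $\cvx(x)$ appears identically in $F$ and in both models it cancels, and the quadratic splits as $\tfrac12 d^\top H_k d=\tfrac12 d^\top H_{\sinn,k} d+\tfrac12 d^\top H_{\sout,k} d$. The key bookkeeping step is to verify that this makes $F_{H_k}$ separate exactly as $F_{H_k}(x;x_k)=\cvx(x)+F^{\sinn}_{H_{\sinn,k}}(x;x_k)+F^{\sout}_{H_{\sout,k}}(x;x_k)$, where the inner block of Proposition~\ref{prop:inner_block} supplies $\cout(\sinn(x_k)+J_{\sinn}(x_k)d)+\tfrac12 d^\top H_{\sinn,k}d$ and the outer block of Proposition~\ref{prop:outer_block} carries the constant $\sout(\Cin(x_k))$, the first-order outer term, and $\tfrac12 d^\top H_{\sout,k}d$, with no double counting.

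Second, I would write the unprojected model error as the sum of the two composite errors,
\[
F(x)-F_{H_k}(x;x_k)=\big[\cout(\sinn(x))-F^{\sinn}_{H_{\sinn,k}}(x;x_k)\big]+\big[\sout(\Cin(x))-F^{\sout}_{H_{\sout,k}}(x;x_k)\big],
\]
and apply the triangle inequality together with the bounds of the two propositions. This yields the two-sided estimate $|F(x)-F_{H_k}(x;x_k)|\le (M_{\sinn}^3+M_{\sout}^3)\|d\|^3+(M_{\sinn}^4+M_{\sout}^4)\|d\|^4=:B$.

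Third, I would restore the projection by noting the exact identity
\[
F(x)-F_{H_k^+}(x;x_k)=\big[F(x)-F_{H_k}(x;x_k)\big]-\tfrac12 d^\top H_k^- d,
\]
which holds because $F_{H_k^+}-F_{H_k}=\tfrac12 d^\top(H_k^+-H_k)d=\tfrac12 d^\top H_k^- d$. Since $H_k^-\succeq 0$ by construction, the projection gap $\tfrac12 d^\top H_k^- d$ is nonnegative: dropping it gives the upper bound $F(x)-F_{H_k^+}\le B$, while retaining it and using $F(x)-F_{H_k}\ge -B$ gives the lower bound $-\tfrac12 d^\top H_k^- d-B$. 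The final ``equivalently'' remark then follows from $d^\top H_k^- d\le \|H_k^-\|\,\|d\|^2$.

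I do not expect a genuine obstacle here, since the statement is essentially an assembly of the two block estimates plus a one-sided treatment of the projection gap. The only point requiring care is the algebraic verification in the first step, namely that the sum of the two block quadratic models reproduces exactly the joint model with the combined Hessian $H_k$, and that the constant and first-order terms align without duplication. Once that identity is in place, the symmetric bound $B$ and the sign of the projection gap make both the upper and lower bounds immediate.
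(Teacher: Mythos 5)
Your proposal is correct and follows essentially the same route as the paper's proof: add the two block models so that the exact quadratic terms combine to $\tfrac12 d^\top H_k d$, isolate the nonnegative projection gap $\tfrac12 d^\top H_k^- d$ as the sole second-order discrepancy, and let the cubic/quartic remainders add linearly. The only cosmetic difference is that you invoke Propositions~\ref{prop:inner_block} and~\ref{prop:outer_block} as black-box bounds on the unprojected model $F_{H_k}$, whereas the paper re-adds the underlying expansion identities, but this changes nothing of substance.
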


\begin{proof}
Apply the inner and outer curvature expansions of Propositions~\ref{prop:inner_block} and~\ref{prop:outer_block} at $x_k$ and add the resulting identities.
The exact second-order terms sum to $\tfrac12\,d^\top(H_{\sinn,k}+H_{\sout,k})d=\tfrac12\,d^\top H_k d$.
Subtracting the model’s quadratic $\tfrac12\,d^\top H_k^+ d$ leaves
\[
-\tfrac12\,d^\top(H_k^+ - H_k)\,d\ =\ -\tfrac12\,d^\top H_k^- d,
\]
which is the sole second-order ``projection gap''. The remaining terms are exactly the cubic and quartic remainders already bounded in the two propositions, and they add linearly, yielding the stated upper and lower bounds. Using $d^\top H_k^- d \le \|H_k^-\|\,\|d\|^2$ gives the norm form.
\end{proof}

\end{document}